\date{\today}
\newtheorem{teo}{Theorem}[section]
\newtheorem{coro}[teo]{Corollary}
\newtheorem{lema}[teo]{Lemma}
\newtheorem{prop}[teo]{Proposition}
\newtheorem{defi}[teo]{Definition}
\newtheorem{ex}[teo]{Example}
\newtheorem{question}[teo]{Question}
\DeclareMathOperator{\C}{\mathcal{C}}
\DeclareMathOperator{\B}{\mathcal{B}}
\DeclareMathOperator{\N}{\mathbb{N}}
\DeclareMathOperator{\Q}{\mathbb{Q}}
\DeclareMathOperator{\fin}{\sf FIN}
\newcommand{\cantor}{2^{\N}}
\newcommand{\ideal}{\mathcal{I}}
\newcommand{\idealj}{\mathcal{J}}
\def\exh{\mbox{\sf Exh}}
\def\suma{\mbox{\sf Sum}}
\def\su{\subseteq}
\title{Pathology of submeasures and $F_{\sigma}$ ideals}
\author{Jorge Mart\'inez}
\address{Escuela de Matem\'aticas, Universidad Industrial de Santander, C.P. 680001, Bucaramanga - Colombia.}
\email{sebas-123126@hotmail.com}
\author{David Meza-Alc\'antara}
\address{Facultad de Ciencias, Universidad Nacional Autónoma de México,  CDMX}
\email{dmeza@ciencias.unam.mx}
\author{Carlos Uzc\'{a}tegui}
\address{Escuela de Matem\'aticas, Universidad Industrial de Santander, C.P. 680001, Bucaramanga - Colombia.}
\email{cuzcatea@saber.uis.edu.co}
\date{\today}
\begin{document}

\begin{abstract}
We address some phenomena about the interaction between lower semicontinuous submeasures on $\mathbb{N}$ and $F_{\sigma}$ ideals.   We analyze the pathology degree of a submeasure  and  present a method to construct pathological $F_{\sigma}$ ideals. We give a partial answers to the question of whether every   nonpathological tall $F_{\sigma}$ ideal is Kat\v{e}tov above the random ideal or at least has a Borel selector. Finally, we show  a representation of nonpathological $F_{\sigma}$ ideals using sequences in Banach spaces. 
\end{abstract}

\subjclass{03E15, 40A05}
\keywords{$F_{\sigma}$ ideal, nonpathological submeasure, tall ideal, Kat\v{e}tov order, Borel selectors, Perfectly bounded sets.}
\thanks{This work was supported by UNAM-PAPIIT IN119320. Second author was supported by a UNAM-DGAPA-PASPA grant for a sabbatical stay at York University, Toronto, Canada.}

\maketitle

\section{Introduction}

Mazur \cite{Mazur91} showed that every 
$F_{\sigma}$ ideal on $\N$ is of the form 
$\fin(\varphi)=\{A\su \N:\varphi(A)<\infty\}$
for some lower semicontinuous submeasure (lscsm) $\varphi$ on $\N$.
Mazur's proof may be thought as a canonical way to get a submeasure for an $F_{\sigma}$ ideal, given a representation as a countable union of closed sets. Mazur's construction provides integer-valued submeasures, but it hides interesting properties of both, the ideal and the submeasure. For example, it does not show whether the ideal is tall or how close is the submeasure to be a measure, even in the case of summable ideals, which are ideals induced by measures. 

Farah \cite{Farah2000} introduced the degree of pathology to quantify the closeness of a lscsm to be a measure. A lscsm  has  degree 1, and is called {\em nonpathological}, when is the supremum of a family of measures. We say that an $F_{\sigma}$ ideal $\ideal$ is {\em nonpathological} if $\ideal=\fin(\varphi)$ for some nonpathological lscsm $\varphi$.

We study pathological and nonpathological $F_{\sigma}$ ideals from several points of view. 
Pathological $F_{\sigma}$ ideals are those that whenever $\ideal=\fin(\varphi)$ for some lscsm $\varphi$, the degree of pathology of $\varphi$ is infinite.
We show a family of examples of this kind of ideals, based on an ideal defined by K. Mazur in \cite{Mazur91}, and also present a way to identify some lscsm $\varphi$ whose degree of pathology is infinite. This last condition is necessary for $\fin(\varphi)$ to be pathological but it is unknown whether is sufficient. 
We use this criterion to show that the degree of pathology of the usual lscsm $\chi$ inducing the Solecki's ideal $\mathcal{S}$ is equal to infinite. 
That complements Figueroa and Hru\v{s}\'ak's result showing that $\mathcal{S}$ is pathological \cite{FigueroaHrusak2023}. 
We also prove that nonpathology is preserved downward by the  Rudin-Keisler pre-order. 
We show that our example of a pathological ideal has a restriction which is Rudin-Keisler above the Solecki's ideal. 
Some questions about pathology of submeasures, pathology of ideals and Rudin-Keisler and Kat\v{e}tov orders are stated. 

The second aspect of our study is concerned with the class of tall ideals (those satisfying that for every infinite set $A$ there is an infinite set  $B\subseteq A$ in the ideal).
Tall ideals have been extensively  investigated (see for instance  \cite{Hrusak2011,HMTU2017,uzcasurvey}).  A very useful tool for the study of tall ideals is the Kat\v{e}tov pre-order $\leq_K$. Among tall Borel ideals  one which has played a pivotal role is the random ideal $\mathcal{R}$ generated by the cliques and independent sets of the random graph \cite{Hrusak2017}. It is known that if $\mathcal{R}\leq_K \ideal$, then $\ideal$ is tall. For a while it was conjectured that $\mathcal{R}$ was a $\leq_K$-minimum among Borel tall ideals \cite{HMTU2017}, this  turned out to be false \cite{GrebikHrusak2020}, in fact, there are $F_\sigma$ tall ideals which are not $\leq_K$ above $\mathcal{R}$. 
We show that if $\varphi$ is a nonpathological lscsm of type $c_0$ and $\fin(\varphi)$ is tall, then  $\mathcal{R}\leq_K\fin(\varphi)$. The notion of a lscsm of type $c_0$ was motivated by the results presented in the last section. It is known that $\mathcal{R}\leq_K \ideal$ implies that $\ideal$   has a Borel selector (i.e. the set $B$ in the definition of tallness can be found in a Borel way from $A$) \cite{GrebikHrusak2020,GrebikUzca2018,grebik2020tall}.  We give a partial answer to the  question of whether every nonpathological  $F_\sigma$ tall ideal  has a Borel selector.

Finally,  in the last section, following the ideas introduced in  \cite{Borodulinetal2015,Drewnowski}, we show how to represent $F_{\sigma}$ ideals using sequences in a Banach space.  Let  ${\bf x}=(x_n)_n$ be a sequence in  a Banach space $X$. We say that  $\sum x_n$ is perfectly bounded, if there is  $k>0$ such that for all $F\subset \N$ finite, $
\left \| \sum_{n\in F}x_n \right \|\leq k$.
Let $$
\mathcal{B}({\bf x})=\left \{ A\subseteq \mathbb{N}: \sum_{n\in A} x_n
 \text{ is perfectly bounded }\right \}.
 $$
We will show that an $F_{\sigma}$ ideal  is nonpathological if and only if it is of the form $\mathcal{B}({\bf x})$ for some ${\bf x}$. In particular, when the space is $c_0$ we get the notion of a lscsm of type $c_0$ mentioned before and we show that $\B({\bf x})$ is tall iff $(x_n)_n$ is weakly null.

\section{Preliminaries}
\label{preliminares}
For an arbitrary set $X$ and a cardinal number $\kappa$, we denote by $[X]^{\kappa}$ (respectively $[X]^{<\kappa}$) the set of the subsets of $X$ having cardinality $\kappa$ (resp, having cardinality smaller than $\kappa$). 
We say that a collection $\mathcal{A}$ of subsets of  a countable set $X$ is {\em analytic} (resp. Borel),
if $\mathcal{A}$ is analytic  (resp. Borel) as a subset of the Cantor cube $2^X$
(identifying subsets of $X$ with characteristic functions). 
The collections $[X]^\omega$, $[X]^{<\omega}$ and $[X]^k$ ($k\in\omega$) are endowed with the subspace topology as subsets of $2^X$.
We refer the reader to  \cite{Kechris94} for all non explained descriptive set theoretic notions and notations.  

An ideal  $\ideal$  on a set $X$ is a collection of subsets of $X$ such that (i) $\emptyset \in \ideal$ and $X\nin \ideal$, (ii) If $A, B\in \ideal$, then $A\cup B\in \ideal$ and (iii) If $A\su B$ and $B\in \ideal$, then $A\in \ideal$. 
Given an ideal $\ideal$ on $X$, the {\em dual filter} of $\ideal$, denoted $\ideal^*$,  is the collection of all sets $X\setminus A$ with $A\in \ideal$.  We denote by  $\ideal^+$ the collection of all subsets of  $X$  which do not belong to $\ideal$. 
The ideal of all finite subsets of $\N$ is denoted by $\fin$.  
We write $A\su^*B$ if $A\setminus B$ is finite. 
There is a vast literature about ideals on countable sets (see for instance the surveys \cite{Hrusak2011} and \cite{uzcasurvey}).  
Since the collection of finite subsets of $X$ is a dense set in $2^X$, there are no ideals containing $[X]^{<\omega}$ which are closed as subsets of $2^X$.  An ideal $\ideal$ on $\N$ is $F_\sigma$ if there is a countable collection of closed subsets $\mathcal{K}_n\su 2^X$ such that $\ideal=\bigcup_n \mathcal{K}_n$. On the other hand,  there are no  $G_\delta$ ideals containing  all finite sets. Thus  the simplest Borel ideals (containing all finite sets)  have complexity $F_\sigma$. 

A family  $\mathcal{A}$ (not necessarily an ideal) of subsets of $X$ is {\em tall}, if every infinite subset of $X$ contains an infinite subset that belongs to $\mathcal{A}$. 
A tall family  $\mathcal{A}$ admits a {\em Borel selector}, if there is a Borel function $S: [X]^\omega\to  [X]^\omega$  such that $S(E)\su E$ and  $S(E)\in \mathcal{A}$ for all $E$.   

A coloring is a function  $c:[X]^2\to 2$ , where $[X]^2$ is the collection of two elements subsets of $X$. A set $H\su X$ is {\em $c$-homogeneous}, if $c$ is constant in $[H]^2$.  We denote by $\hom(c)$ the collection of homogeneous sets and by $Hom(c)$  the ideal generated by the $c$-homogeneous sets, that is, $A\in Hom(c)$ iff there are $c$-homogeneous sets $H_1, \cdots, H_n$ such that $
A\subseteq H_1\cup\cdots \cup H_n$. Since the singletons are trivially $c$-homogeneous sets, $[X]^{<\omega}\subseteq Hom(c)$, for every coloring $c$.
It is easy to check that $Hom(c)$ is $F_\sigma$ and, by Ramsey's theorem,  $Hom(c)$ is tall.
For some colorings $c$,  $Hom(c)$ is trivial. For example, if $c$ satisfies that there are no infinitely many maximal 0-homogeneous sets, say $H_1, \cdots, H_n$ are the maximal 0-homogeneous sets, then $Hom(c)$ is trivial. In fact,  let $x,y\nin H_1\cup\cdots\cup H_n$. Then necessarily $c\{x,y\}=1$ (otherwise there is $i$ such that $x,y\in H_i$). That is, $L=\N\setminus (H_1\cup\cdots\cup H_n)$ is 1-homogeneous. Hence $\N$ is the union of finitely many homogeneous sets. 
The collection of homogeneous sets is a typical example of a tall family that has a Borel selector (see \cite{GrebikUzca2018}). 

A function  $\varphi:\mathcal{P}(\N)\to[0,\infty]$ is a {\em lower semicontinuous submeasure (lscsm)} if $\varphi(\emptyset)=0$,
$\varphi(A)\leq\varphi(A\cup B)\leq\varphi(A)+\varphi(B)$, $\varphi(\{n\})<\infty$ for all $n$ and $\varphi(A)=\lim_{n\to\infty}\varphi(A\cap\{0,1,\cdots, n\})$. 

Three  ideals associated to a lscsm are the following:
\[
\begin{array}{lcl}
\fin(\varphi)& =& \{A\subseteq\N:\varphi(A)<\infty\}.
\\
\exh(\varphi)& = &\{A\subseteq\N: \lim_{n\to\infty}\varphi(A\setminus\{0,1,\dots, n\})=0\}.\\
\suma(\varphi) & = & \{A\subseteq \mathbb{N}:\sum_{n\in A}\varphi(\{n\})<\infty\}. 
\end{array}
\]
Notice that $ \suma(\varphi)\subseteq \exh(\varphi)\subseteq \fin(\varphi)$. 
These ideals have been extensively investigated. The work of Farah \cite{Farah2000} and Solecki \cite{Solecki1999} are two of the most important early works for the study of the ideals associated to submeasures.   

An ideal $\ideal$ is a {\em $P$-ideal} if for every sequence $(A_n)_n$ of sets in $\ideal$ there is $A\in \ideal$ such that 
$A_n\setminus A$ is finite for all $n$. The following representation  of analytic $P$-ideals is the most fundamental result about them.  It says that any $P$-ideal is in a sense similar to a density ideal. 

\begin{teo}
\label{solecki}
(S. Solecki \cite{Solecki1999}) Let $\ideal$ be an analytic ideal on $\N$. The following are equivalent: 
\begin{itemize}
\item[(i)]  $\ideal$ is a $P$-ideal.
\item[(ii)] There is a lscsm $\varphi$ such that $\ideal=\exh(\varphi)$. Moreover,  there is such $\varphi$  bounded. 
\end{itemize}
In particular, every analytic $P$-ideal  is $F_{\sigma\delta}$.  Moreover, $\ideal$ is an $F_\sigma$ $P$-ideal, if,  and only if, there is a lscsm $\varphi$ such that $\ideal=\exh(\varphi)= \fin(\varphi)$.
\end{teo}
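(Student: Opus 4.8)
\emph{Implication (ii)$\Rightarrow$(i) and the $F_{\sigma\delta}$ clause.} Suppose $\ideal=\exh(\varphi)$ and let $(A_k)_k$ be a sequence in $\ideal$. Choose $m_0<m_1<\cdots$ with $\varphi(A_k\setminus\{0,\dots,m_k\})<2^{-k}$ and set $A=\bigcup_k(A_k\setminus\{0,\dots,m_k\})$. Then $A_k\setminus A\su\{0,\dots,m_k\}$ is finite for every $k$, and for each $\ell$ and each $j\geq m_\ell$ one has $\varphi(A\setminus\{0,\dots,j\})\leq\sum_{k<\ell}\varphi(A_k\setminus\{0,\dots,j\})+\sum_{k\geq\ell}2^{-k}$; letting $j\to\infty$ (the finite sum goes to $0$ because $A_k\in\exh(\varphi)$) and then $\ell\to\infty$ yields $A\in\exh(\varphi)=\ideal$, so $\ideal$ is a $P$-ideal. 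For the complexity statement, write $\exh(\varphi)=\bigcap_j\bigcup_m\{A:\varphi(A\setminus\{0,\dots,m\})\leq 1/j\}$; each set in braces is closed, since $\varphi$ is lower semicontinuous and $A\mapsto A\setminus\{0,\dots,m\}$ is continuous, so $\exh(\varphi)$ --- hence every analytic $P$-ideal --- is $F_{\sigma\delta}$.

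\emph{Implication (i)$\Rightarrow$(ii).} This is the heart of the theorem, and I would prove it in three steps. \emph{Step 1 (Polishability).} Show that an analytic $P$-ideal $\ideal$, regarded as a subgroup of $(2^{\N},\triangle)$, carries a Polish group topology $\tau$ finer than the subspace topology inherited from $2^{\N}$. This is where analyticity is used in an essential way: take a continuous surjection $f\colon\N^{\N}\to\ideal$ and use the $P$-ideal property to run a diagonalization/averaging argument that upgrades the topology to a complete, separable, translation-invariant one. \emph{Step 2 (metrization).} By Birkhoff--Kakutani fix a compatible complete metric $d$ on $(\ideal,\tau)$, chosen $\triangle$-invariant so that $d(A,B)=d(\emptyset,A\triangle B)$. \emph{Step 3 (submeasure).} Put $\psi(C)=d(\emptyset,C)$ for $C\in\ideal$ --- this is subadditive --- and define $\varphi(A)=\sup\{\psi(s):s\su A,\ s\text{ finite}\}$ for arbitrary $A\su\N$ (the restrictions being finite, hence in $\ideal$). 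One checks that $\varphi$ is a lscsm and that $\exh(\varphi)=\ideal$: that $A\in\ideal$ is equivalent to saying the finite pieces of $A$ contained in tails $\{m+1,m+2,\dots\}$ have $\psi$-norm tending to $0$, which is extracted from the fact that $\tau$ extends the $2^{\N}$-topology (so that $\{B:B\su A\}$ is $\tau$-compact when $A\in\ideal$). Composing $\psi$ with $t\mapsto\min(t,1)$ before taking the supremum gives a bounded such $\varphi$ without changing $\exh(\varphi)$. I expect Step 1 --- obtaining Polishability from analyticity together with the $P$-property --- and the verification $\exh(\varphi)=\ideal$ in Step 3 to be the main obstacles.

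\emph{The $F_\sigma$ $P$-ideal clause.} If $\ideal=\exh(\varphi)=\fin(\varphi)$, then $\ideal$ is a $P$-ideal by the first paragraph, and $\ideal=\fin(\varphi)=\bigcup_k\{A:\varphi(A)\leq k\}$ is a countable union of closed sets, hence $F_\sigma$. Conversely, let $\ideal$ be an $F_\sigma$ $P$-ideal; the equivalence just proved gives a bounded lscsm $\mu$ with $\ideal=\exh(\mu)$, and Mazur's theorem --- applied to a decomposition of $\ideal$ into an increasing chain of closed hereditary sets --- gives an integer-valued lscsm $\rho$ with $\fin(\rho)=\ideal$. It remains to merge $\mu$ and $\rho$ into a single lscsm $\varphi$ with $\exh(\varphi)=\fin(\varphi)=\ideal$: one needs $\varphi$ monotone and subadditive, $\varphi(A)=\infty$ for $A\notin\ideal$, and $\lim_m\varphi(A\setminus\{0,\dots,m\})=0$ for $A\in\ideal$. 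The naive combinations fail --- $\mu+\rho$ and $\max(\mu,\rho)$ make $\exh$ too small, while products of submeasures are not subadditive --- so the merge must be carried out by interleaving the two submeasures along a block decomposition of $\N$ tuned to the level sets of $\rho$, charging each new $\rho$-level an amount of $\mu$-mass that still vanishes along tails. This last step is routine in spirit but delicate to set up so as to preserve subadditivity.
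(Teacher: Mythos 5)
The paper does not prove this theorem; it quotes it from Solecki \cite{Solecki1999} and uses it as a black box, so there is no internal argument to compare against. Evaluated on its own, your write-up is correct where it is complete: the (ii)$\Rightarrow$(i) diagonalization and the $F_{\sigma\delta}$ complexity bound are both fine. But the two places you yourself flag as ``the main obstacles'' are genuine gaps, not deferred details.

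For (i)$\Rightarrow$(ii) the problem is the order of the argument. You propose to establish a Polish group topology $\tau$ on $\ideal$ \emph{first} and then read off $\varphi$ from a Birkhoff--Kakutani invariant metric, but in Solecki's development Polishability of an analytic $P$-ideal is obtained \emph{from} the representation $\ideal=\exh(\varphi)$ (via the metric $d(A,B)=\varphi(A\triangle B)$), not the other way around; your Step~1, ``run a diagonalization/averaging argument,'' restates the theorem as a goal without carrying it out, so the plan as written is circular. Step~3 also rests on the assertion that $\{B:B\su A\}$ is $\tau$-compact for $A\in\ideal$; that set is compact in the $2^{\N}$-topology, and $\tau$ is assumed strictly finer, so $\tau$-compactness is an additional nontrivial claim (it does hold once one has the $\exh$-metric, but that is precisely the metric you are trying to produce). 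For the $F_\sigma$ $P$-ideal clause your diagnosis of why $\mu+\rho$ and $\max(\mu,\rho)$ fail (they shrink $\exh$) is right, but the ``interleaving along a block decomposition'' that is supposed to fix it is not actually constructed, and that construction is the whole content of the clause. In short, the proposal identifies the correct landmarks, but neither hard step is proved.
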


\subsection{Integer valued submeasures}
\label{integervalued}

$F_{\sigma}$ ideals are precisely the ideals of the form $\fin(\varphi)$ for some lscsm $\varphi$.  We recall this result  to point out  that such  $\varphi$ can have an extra property, which  we use later in our  discussion  of pathology of submeasures. 

We say that a lscsm $\varphi$ is {\em integer-valued} if it takes values in  $\N\cup\{\infty\}$ and $\varphi(\N)=\infty$.

\begin{teo}
\label{mazur}
(Mazur \cite{Mazur91}) For each $F_{\sigma}$ ideal $\ideal$ on $\N$, there is an integer-valued lscsm $\varphi$ such that $\ideal=\fin(\varphi)$.  Moreover, there is such $\varphi$ satisfying that $\varphi(\{n\})=1$ for all $n\in \N$.  
\end{teo}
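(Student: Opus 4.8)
The plan is to first massage the given $F_\sigma$ presentation of $\ideal$ into a particularly convenient form and then read off the submeasure almost directly. Write $\ideal=\bigcup_n D_n$ with each $D_n$ closed in $2^\N$. I would first replace $D_n$ by $D_0\cup\cdots\cup D_n$ to make the sequence increasing; then replace each layer by its downward closure $\{A:A\su B\text{ for some }B\in D_n\}$, which is closed --- being the image of the compact set $D_n\times 2^\N$ under the continuous map $(B,X)\mapsto B\cap X$ --- and is hereditary. Next I would adjoin $\emptyset$ and all singletons to the bottom layer; this keeps it closed, since the only accumulation point of $\{\{m\}:m\in\N\}$ in $2^\N$ is $\emptyset$, and keeps it hereditary. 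Finally --- the key reduction --- I would replace each layer $C_n$ by $\widetilde C_n=\{A\cup B:A,B\in C_n\}$, the image of the compact $C_n\times C_n$ under $(A,B)\mapsto A\cup B$, hence closed. One checks that $\widetilde C_n$ is still hereditary (if $E\su A\cup B$ then $E=(E\cap A)\cup(E\cap B)$ with $E\cap A,E\cap B\in C_n$), still increasing in $n$, still contains $\emptyset$ and the singletons, still satisfies $\bigcup_n\widetilde C_n=\ideal$ (the union of two sets of $\ideal$ is in $\ideal$), and now is closed under unions of pairs, hence under arbitrary finite unions of its members. So we may assume $\ideal=\bigcup_n C_n$ with $C_n\su C_{n+1}$, each $C_n$ closed, hereditary, closed under finite unions, and $\emptyset,\{m\}\in C_0$ for every $m$.

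For the submeasure, set $n_0(s)=\min\{n:s\in C_n\}$ for nonempty finite $s\su\N$ (this is finite since $s\in\fin\su\ideal$), and define $\varphi(\emptyset)=0$, $\varphi(s)=n_0(s)+1$ for nonempty finite $s$, and $\varphi(A)=\lim_{n\to\infty}\varphi(A\cap\{0,\dots,n\})\in\N\cup\{\infty\}$ for arbitrary $A$ (the limit exists because heredity makes $n_0$, and hence $\varphi$, monotone on finite sets). Then $\varphi(\emptyset)=0$, $\varphi\{m\}=1$ (singletons lie in $C_0$), $\varphi$ is integer-valued, and $\varphi$ is lower semicontinuous by construction. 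Monotonicity of $\varphi$ on all sets is immediate. For subadditivity on finite sets, if $s,t\neq\emptyset$ and, say, $n_0(s)\le n_0(t)$, then $s,t\in C_{n_0(t)}$, so $s\cup t\in C_{n_0(t)}$ by union-closedness, giving $\varphi(s\cup t)\le n_0(t)+1\le\varphi(s)+\varphi(t)$; the cases involving $\emptyset$ are trivial. Subadditivity for arbitrary sets then follows by passing to the limit over initial segments. So $\varphi$ is an integer-valued lscsm with all singletons of value $1$.

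It remains to check $\fin(\varphi)=\ideal$. Every finite set belongs to both. Let $A$ be infinite. If $A\in\ideal$, then $A\in C_m$ for some $m$, so $A\cap\{0,\dots,n\}\in C_m$ for all $n$ by heredity, whence $\varphi(A\cap\{0,\dots,n\})\le m+1$ and $\varphi(A)\le m+1<\infty$. Conversely, if $\varphi(A)=M<\infty$, then for all sufficiently large $n$ the nonempty set $A\cap\{0,\dots,n\}$ has $\varphi$-value at most $M$, i.e.\ lies in $C_{M-1}$; since these initial segments converge to $A$ in $2^\N$ and $C_{M-1}$ is closed, $A\in C_{M-1}\su\ideal$. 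In particular $\varphi(\N)=\infty$ since $\N\notin\ideal$, so $\varphi$ is integer-valued in the required sense and $\fin(\varphi)=\ideal$, completing the proof. (If one drops the requirement $\varphi\{m\}=1$, one may omit adjoining the singletons and the ``$+1$'', obtaining the first assertion with $\varphi(s)=n_0(s)$.)

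The only genuinely delicate point is the final reduction above, passing to the sets $\widetilde C_n$: one must verify that forcing closure under finite unions preserves closedness, heredity, the monotone layering, and the identity $\bigcup_n C_n=\ideal$ all at once. Everything after that is routine bookkeeping with lower semicontinuity and the approximation of a set by its initial segments.
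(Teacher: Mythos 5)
The reduction at the heart of your argument does not go through. You replace each layer $C_n$ by $\widetilde C_n=\{A\cup B:A,B\in C_n\}$ and then assert that $\widetilde C_n$ ``is closed under unions of pairs, hence under arbitrary finite unions of its members.'' This is false: if $E_1=A_1\cup B_1$ and $E_2=A_2\cup B_2$ with $A_i,B_i\in C_n$, then $E_1\cup E_2$ is a union of \emph{four} sets from $C_n$, and there is no reason for it to be a union of two. Concretely, take $C_n=\{\emptyset\}\cup\{\{m\}:m\in\N\}$. Then $\widetilde C_n$ is the family of all sets of size at most $2$, which is certainly not closed under pairwise unions. Worse, the obstruction is not just with this particular construction: no compact hereditary family containing all singletons can be closed under finite unions, since then it would contain every finite set and hence be dense in $2^\N$, contradicting compactness. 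So the slogan ``arrange each layer to be closed under finite unions'' simply cannot be realised, and the subadditivity argument you base on it ($s,t\in C_{n_0(t)}$ forces $s\cup t\in C_{n_0(t)}$) collapses.

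The correct device, which is what the paper's proof uses, is to arrange that unions move you \emph{up one layer}: build a nested sequence of closed hereditary sets $\mathcal K_n$ with $A\cup B\in\mathcal K_{n+1}$ whenever $A,B\in\mathcal K_n$ (take $\mathcal K_{n+1}$ to contain $\{A\cup B:A,B\in\mathcal K_n\}$ together with the next piece of your original $F_\sigma$ presentation), normalize so that $\mathcal K_0=\{\emptyset\}$ and all singletons lie in $\mathcal K_1$, and set $\varphi(A)=\min\{n:A\in\mathcal K_n\}$ for nonempty $A\in\ideal$, $\varphi(\emptyset)=0$, and $\varphi(A)=\infty$ for $A\notin\ideal$. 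Then $\varphi\{m\}=1$ for every $m$, lower semicontinuity and $\fin(\varphi)=\ideal$ follow exactly as in the last part of your argument, and subadditivity holds because for nonempty $s,t$ one has $\varphi(s),\varphi(t)\geq 1$, so
\[
\varphi(s\cup t)\;\leq\;\max\bigl(\varphi(s),\varphi(t)\bigr)+1\;\leq\;\varphi(s)+\varphi(t).
\]
Note also that this makes the ``$+1$'' in your definition unnecessary; it is the normalization $\mathcal K_0=\{\emptyset\}$ that guarantees singletons get value $1$. The rest of your write-up (heredity, monotonicity, lower semicontinuity via initial segments, and the verification that $\fin(\varphi)=\ideal$ using compactness of the layers) is fine and is essentially what the paper does once the layering is set up correctly.
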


\proof
We include a sketch in order to verify the last claim. Let $(\mathcal{K}_n)_n$ be a collection of closed hereditary subsets of $\cantor$ such that $\mathcal{K}_n\subseteq \mathcal{K}_{n+1}$, $A\cup B\in \mathcal{K}_{n+1}$ for all $A,B\in \mathcal{K}_n$ and $\ideal=\bigcup_n \mathcal{K}_n$.  We  can assume that $\mathcal{K}_0=\{\emptyset\}$, and since $\{\{n\}:\;n\in \N\}\cup\{\emptyset\}$ is a closed subset of $\cantor$, we can assume that $\{n\}\in \mathcal{K}_1$ for all $n\in \N$. Then the submeasure associated to $(\mathcal{K}_n)_n$ is  
given by $\varphi(A)=\min\{n\in \N:\; A\in \mathcal{K}_n\}$, if $\emptyset\neq A\in \ideal$, and  $\varphi(A)=\infty$, otherwise. 

\qed

We now highlight two relevant properties of integer-valued submeasures.
\
\begin{prop}
Let $\varphi$ be an unbounded integer-valued lscsm  such that  $\varphi(\{x\})=1$ for all $x\in \N$. Then 
\begin{enumerate}
\item[(i)] Every $n\in \N$ belongs to the range of $\varphi$. 
\item[(ii)]  For every integer $k\geq 2$, there is  a finite set $B$  such that $\varphi(B)=k$ and $\varphi(C)<k$ for all $C\subset B$ with $C\neq B$.  
\end{enumerate}
\end{prop}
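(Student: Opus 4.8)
The plan is to use two elementary observations. First, lower semicontinuity lets us replace any set of large $\varphi$-value by a \emph{finite} one of value at least as large: since $\varphi(\N)=\infty$ and $\varphi(\N)=\lim_m \varphi(\{0,1,\dots,m\})$, some initial segment of $\N$ already has $\varphi$-value $\geq n$ for any prescribed $n$. Second, because $\varphi\{x\}=1$ for every $x$, and $\varphi$ is monotone, subadditive and integer-valued, adjoining one point to a finite set raises the value of $\varphi$ by exactly $0$ or $1$.

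For (i), I would fix $n\geq 1$ (the case $n=0$ being witnessed by $\emptyset$), pick a finite $F=\{x_1,\dots,x_m\}$ with $\varphi(F)\geq n$, and consider $F_i=\{x_1,\dots,x_i\}$ for $i\leq m$, with $F_0=\emptyset$. By the second observation the integer sequence $\varphi(F_0)=0,\varphi(F_1),\dots,\varphi(F_m)\geq n$ increases by steps of size $0$ or $1$, hence passes through the value $n$; the corresponding $F_i$ witnesses that $n$ belongs to the range of $\varphi$.

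For (ii), I would take the family $\mathcal{A}$ of all finite sets $F$ with $\varphi(F)\geq k$ (nonempty, by the argument above or by (i)) and let $B\in\mathcal{A}$ be of minimal cardinality; such a $B$ exists because cardinalities are well-ordered. Minimality immediately gives $\varphi(C)<k$ for every proper subset $C\subsetneq B$ (otherwise $C\in\mathcal{A}$ with $|C|<|B|$), and in particular $B\neq\emptyset$. Finally, fixing $x\in B$ and writing $C=B\setminus\{x\}$, subadditivity gives $\varphi(B)\leq\varphi(C)+\varphi\{x\}\leq(k-1)+1=k$, which together with $\varphi(B)\geq k$ forces $\varphi(B)=k$.

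There is no genuine obstacle here: the two points that need a little care are the passage to a finite witness — the one place where lower semicontinuity is used — and the existence of a minimal-cardinality witness. The argument in fact covers $k=1$ as well (with $B$ a singleton), so the restriction $k\geq 2$ in the statement is only there to discard the trivial case.
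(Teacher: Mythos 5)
Your proof is correct and takes a somewhat different, arguably cleaner route than the paper's. The paper proves (i) by contradiction: assuming a least missing value $n$, it picks a finite, $\subseteq$-minimal set whose $\varphi$-value is the least value strictly above $n$, then derives a contradiction from subadditivity. You instead argue directly, building a chain $\emptyset=F_0\subsetneq F_1\subsetneq\cdots\subsetneq F_m$ inside a finite witness with $\varphi(F_m)\geq n$ and observing that the integer sequence $\varphi(F_i)$ climbs by steps of $0$ or $1$ (by monotonicity, subadditivity, $\varphi\{x\}=1$, and integer-valuedness), hence must pass through $n$. Both arguments rest on the same two ingredients --- lower semicontinuity to obtain a finite witness, and the unit-step observation --- but yours constructs the set rather than reasoning by minimal counterexample, and makes explicit exactly where finiteness enters. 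For (ii) the paper invokes (i) to get a finite set of value exactly $k$ and then passes to a $\subseteq$-minimal one; you take a cardinality-minimal finite set of value $\geq k$ and use subadditivity to force its value down to exactly $k$, which makes (ii) self-contained rather than dependent on (i). Your closing remark that the argument already covers $k=1$ (so the hypothesis $k\geq 2$ only excludes a triviality) is also accurate.
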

\proof
(i) Suppose not and let  $n=\min(\N\setminus range(\varphi))$. Since $\varphi(\N)=\infty$, by the lower semicontinuity there is a  finite set $A$  such that  $\varphi(A)=\min\{\varphi(B): \;n<\varphi(B)\}$.  Let $m=\varphi(A)$. We can also assume that $A$ is $\su$-minimal with that property, that is, $\varphi(B)<n$ for all $B\su A$ with $B\neq A$.  Let $x\in A$, then 
\[
\varphi(A)\leq \varphi(A\setminus\{x\})+\varphi(\{x\})<n+1\leq m, 
\]
a contradiction. 

(ii) By (i) there is  a $\subseteq$-minimal finite set  $B$  such that $\varphi(B)=k$. 

\qed

\section{Pathology of submeasures}

The ideal $\suma(\varphi)$ is also induced by a measure determined by its values on  singletons. Namely, $\mu_{\varphi}(A)=\sum_{n\in A}\varphi(\{n\})$ is a measure and $\suma(\varphi)=\fin(\mu_{\varphi})=\exh(\mu_{\varphi})$. 
These ideals are the most well-behaved among  $F_{\sigma}$ or  $P$-ideals. Several good properties of summable ideals are shared by the larger  class of nonpathological ideals. 
We say that  $\mu $ is {\em dominated by $\varphi$}, if $\mu(A)\leq \varphi(A)$ for all $A$, in this case, we write $\mu\leq \varphi$. 
A lscsm $\varphi$ is {\em nonpathological} if it is the supremmum of all  ($\sigma$-additive) measures dominated by $\varphi$. 
A quite relevant application of this kind of submeasures was Farah's proof  that $\exh(\varphi)$ has the Radon-Nikod\'ym property if $\varphi$ is nonpathological \cite{Farah2000}. 
So far, nonpathology has been considered a property only about submeasures, we extend it to ideals as follows.

\begin{defi}
We say that an $F_{\sigma}$ ideal $\ideal$ is \emph{nonpathological} if there is a nonpathological lscsm $\varphi$ such that $\ideal=\fin(\varphi)$.
\end{defi} 

We show examples of ideals induced by both, a pathological and a nonpathological submeasure. The following proposition gives us a criterion for showing that an integer-valued submeasure is pathological.

\begin{prop}
\label{pathological}
Let $\varphi$ be an integer valued lscsm on a set $X$. Suppose there is a finite set $A\subseteq X$ with $|A|\geq 2$ such that $\varphi(A\setminus\{x\})<\varphi(A)$ for all $ x\in A$ and $\varphi(A)<|A|$.  Then $\varphi$ is pathological. 
\end{prop}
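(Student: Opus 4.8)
The plan is to show that the set $A$ itself witnesses pathology: I will prove that every $\sigma$-additive measure $\mu$ with $\mu\le\varphi$ satisfies
$$\mu(A)\ \le\ \frac{|A|\,(\varphi(A)-1)}{|A|-1},$$
and that under the hypothesis $\varphi(A)<|A|$ the right-hand side is strictly smaller than $\varphi(A)$. Consequently the submeasure $\hat\varphi(B):=\sup\{\mu(B):\mu\le\varphi\text{ a measure}\}$ satisfies $\hat\varphi(A)<\varphi(A)$, so $\varphi\neq\hat\varphi$ and $\varphi$ is not the supremum of the measures it dominates, i.e.\ $\varphi$ is pathological.

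First I would record the finiteness bookkeeping. Since $A$ is finite and $\varphi\{x\}<\infty$ for each $x$, subadditivity gives $\varphi(A)\le\sum_{x\in A}\varphi\{x\}<\infty$, so in fact $\varphi(A)\in\N$; and because $\varphi$ is integer-valued, the strict inequality $\varphi(A\setminus\{x\})<\varphi(A)$ improves to $\varphi(A\setminus\{x\})\le\varphi(A)-1$ for every $x\in A$. Also every $\mu\le\varphi$ is finite on $A$, which legitimizes the telescoping identity used below.

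Now the core computation: fix a measure $\mu\le\varphi$. For each $x\in A$ we have $\mu(A\setminus\{x\})\le\varphi(A\setminus\{x\})\le\varphi(A)-1$. Summing over the $|A|$ elements of $A$ and using additivity of $\mu$,
$$\sum_{x\in A}\mu(A\setminus\{x\})=\sum_{x\in A}\bigl(\mu(A)-\mu\{x\}\bigr)=|A|\,\mu(A)-\mu(A)=(|A|-1)\,\mu(A),$$
hence $(|A|-1)\,\mu(A)\le |A|\,(\varphi(A)-1)$, which is the asserted bound after dividing by $|A|-1>0$. A one-line rearrangement shows that $\dfrac{|A|\,(\varphi(A)-1)}{|A|-1}<\varphi(A)$ is equivalent to $\varphi(A)<|A|$, which holds by hypothesis. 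Taking the supremum over all $\mu\le\varphi$ gives $\hat\varphi(A)\le\dfrac{|A|\,(\varphi(A)-1)}{|A|-1}<\varphi(A)$, completing the argument.

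There is essentially no obstacle beyond this: the only points requiring care are that every quantity involved is finite (so that $\mu(A\setminus\{x\})=\mu(A)-\mu\{x\}$ is valid) and that we may divide by $|A|-1$, which is exactly where the hypothesis $|A|\ge 2$ is used. The mathematical content is the elementary inequality $\frac{|A|(t-1)}{|A|-1}<t\iff t<|A|$; everything else is verification. If desired, the same computation yields the quantitative statement that the degree of pathology of $\varphi$ is at least $\frac{(|A|-1)\,\varphi(A)}{|A|\,(\varphi(A)-1)}>1$.
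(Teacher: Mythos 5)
Your proof is correct, and it relies on the same underlying idea as the paper: use $\varphi(A\setminus\{x\})\le\varphi(A)-1$ (the integer-valued improvement of the strict inequality) together with additivity of $\mu$ to bound $\mu(A)$ strictly below $\varphi(A)$. The paper organizes this as a contradiction --- assuming nonpathology, choosing $\mu$ with $\mu(A)$ close to $\varphi(A)$, and invoking a pigeonhole pick of a single $y\in A$ with $\mu\{y\}\le\varphi(A)/|A|$ --- whereas you sum the inequality $\mu(A\setminus\{x\})\le\varphi(A)-1$ over all $x\in A$ and use the telescoping identity $\sum_{x\in A}\mu(A\setminus\{x\})=(|A|-1)\mu(A)$. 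The two are mathematically equivalent, but your direct version is slightly cleaner: it produces an explicit bound $\hat\varphi(A)\le\frac{|A|(\varphi(A)-1)}{|A|-1}<\varphi(A)$ without having to instantiate any particular $\mu$, and as you note it yields for free the quantitative lower bound $P(\varphi)\ge\frac{(|A|-1)\varphi(A)}{|A|(\varphi(A)-1)}>1$ on the degree of pathology, which the paper's formulation does not state. The bookkeeping remarks (finiteness of $\varphi(A)$ by subadditivity, finiteness of $\mu$ on $A$, and $|A|-1>0$ to permit division) are exactly the points that need checking and you have checked them.
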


\proof
Suppose $\varphi$ is nonpathological. 
Let $m=|A|$ and  $x_0\in A$ be such that $\varphi(A\setminus\{x_0\})=\max\{\varphi(A\setminus\{x\}):\; x\in A\}$. 
Since $\varphi$ takes integer values and $\varphi(A)/m<1$,  $\varphi(A\setminus\{x_0\})+\varphi(A)/m < \varphi(A)$.
Pick a measure $\mu\leq \varphi$ such that 
$$
\varphi(A\setminus\{x_0\})+\varphi(A)/m\;<\mu(A)\leq \varphi(A).
$$
There is $y\in A$ such that $\mu(\{y\})\leq \varphi(A)/m$. Then 
\[
\mu(A\setminus\{y\})=\mu(A)-\mu(\{y\})\geq \mu(A)-\varphi(A)/m>\varphi(A\setminus\{x_0\})
\geq \varphi(A\setminus\{y\}),
\]
which contradicts that $\mu\leq \varphi$. 
\endproof

\medskip

Now we present a very elementary example of a pathological lscsm.
\begin{ex}
\label{patologica-minima}
{\em 
Let $\varphi$ be the lscsm  defined on $\{0,1,2\}$  by $\varphi(\emptyset)=0$, $\varphi(a)=1$ if $0<\vert a \vert\leq 2$ and $\varphi(\{0,1,2\})=2$. Then $\varphi$ is the minimal example of a pathological integer-valued submeasure on a finite set, where singletons have submeasure 1.  
}
\end{ex}

By elementary reasons, all $F_{\sigma}$ ideals and all analytic P-ideals can be induced by a pathological lscsm, as the following proposition shows.

\begin{prop}
\label{todostienepatologica}
Let $\varphi$ be any lscsm on $\N$. There is a pathological lscsm  $\psi$ such that  
$\fin(\varphi)=\fin(\psi)$ and $\exh(\varphi)=\exh(\psi)$.
\end{prop}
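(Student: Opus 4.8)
The plan is to leave $\varphi$ essentially untouched and merely graft onto it a fixed finite pathological ``gadget'', exploiting the fact that $\fin(\cdot)$ and $\exh(\cdot)$ do not see what a submeasure does on a fixed finite initial segment, provided the values there stay finite. Concretely, let $\gamma$ be the pathological lscsm on $\{0,1,2\}$ from Example~\ref{patologica-minima} (so $\gamma(\emptyset)=0$, $\gamma(a)=1$ whenever $0<|a|\le 2$, and $\gamma(\{0,1,2\})=2$), and define, for $A\su\N$,
\[
\psi(A)=\varphi\big(A\setminus\{0,1,2\}\big)+\gamma\big(A\cap\{0,1,2\}\big).
\]

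First I would check that $\psi$ is an lscsm. Here $\psi(\emptyset)=0$ and $\psi\{n\}<\infty$ for every $n$ are immediate (for $n\in\{0,1,2\}$ the value is $1$, otherwise it is $\varphi\{n\}$). Monotonicity and subadditivity pass termwise from $\varphi$ and $\gamma$, because both maps $A\mapsto A\setminus\{0,1,2\}$ and $A\mapsto A\cap\{0,1,2\}$ are monotone and commute with unions. Lower semicontinuity holds since for $n\ge 2$ one has $\psi(A\cap\{0,\dots,n\})=\varphi\big((A\setminus\{0,1,2\})\cap\{0,\dots,n\}\big)+\gamma(A\cap\{0,1,2\})$, and this tends to $\psi(A)$ by the lower semicontinuity of $\varphi$.

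Next I would verify $\fin(\psi)=\fin(\varphi)$ and $\exh(\psi)=\exh(\varphi)$. For $\fin$: since $\gamma$ is bounded by $2$, $\psi(A)<\infty$ iff $\varphi(A\setminus\{0,1,2\})<\infty$, and the latter is equivalent to $\varphi(A)<\infty$ because $\varphi(A)\le\varphi(A\setminus\{0,1,2\})+\varphi\{0\}+\varphi\{1\}+\varphi\{2\}$ while $\varphi(A\setminus\{0,1,2\})\le\varphi(A)$. For $\exh$: if $n\ge 2$ then $(A\setminus\{0,\dots,n\})\cap\{0,1,2\}=\emptyset$, so $\psi(A\setminus\{0,\dots,n\})=\varphi(A\setminus\{0,\dots,n\})$ for all large $n$; hence the exhaustive tails coincide and the limits agree. (The same observation gives $\suma(\psi)=\suma(\varphi)$ as well, if one wishes to record it.)

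Finally, pathology: on subsets of $F=\{0,1,2\}$ the submeasure $\psi$ coincides with $\gamma$, so any measure $\mu\le\psi$ satisfies $\mu\{i\}+\mu\{j\}\le\psi\{i,j\}=1$ for each of the three pairs in $F$, whence $2\mu(F)\le 3$, i.e.\ $\mu(F)\le 3/2<2=\psi(F)$. Thus $\psi$ is not the supremum of the measures it dominates, so $\psi$ is pathological, and the proof is complete. I do not anticipate a genuine obstacle: the construction is a plain ``direct sum'' of $\varphi$ restricted to $\N\setminus\{0,1,2\}$ with the minimal pathological gadget, and the only point requiring a little care is the invariance of the two ideals, which is exactly what the boundedness and finite support of $\gamma$ deliver.
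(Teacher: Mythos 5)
Your proof is correct and follows essentially the same construction as the paper: both graft the minimal pathological gadget from Example~\ref{patologica-minima} onto $\varphi$ via the direct-sum formula $\psi(A)=\varphi_0(A\cap\{0,1,2\})+\varphi(A\setminus\{0,1,2\})$. You merely spell out the routine verifications (lscsm axioms, invariance of $\fin$ and $\exh$, and the explicit measure bound $\mu(\{0,1,2\})\le 3/2<2$) that the paper leaves to the reader.
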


\proof 
Let $\varphi_0$ be the submeasure on $\{0,1,2\}$ defined in Example \ref{patologica-minima}. Let 
$$
\psi(A)=\varphi_0(A\cap\{0,1,2\})+\varphi(A\setminus\{0,1,2\}).
$$
$\psi$ is pathological, as $\psi\restriction \{0,1,2\}=\varphi_0$. Clearly $\psi$ works.
\endproof

Now we present an example of a pathological lscsm  $\varphi$ and a nonpathological lscsm $\psi$ such that 
$\fin(\varphi)=\fin(\psi)$ and $\fin(\varphi)$ is tall. 
 
\begin{ex}
\label{ED}
{\em Let   $(B_n)_n$ be a partition of $\N$ into infinite sets. 
The ideal $\mathcal{ED}$ is defined as the ideal generated by pieces and selectors of the partition $(B_n)_n$. Let $\mathcal{K}_0$ be the set $\{\emptyset\}$ and 
\[
\mathcal{K}_1= \{H\su\N: H\su B_n\;\text{for some $n$}\}\cup\{H\su \N:  \text{$H$ is a partial selector for $(B_n)_n$} \}.
\]
Then $\mathcal{K}_1$ is closed hereditary and $\mathcal{ED}$ is the ideal generated by $\mathcal{K}_1$. Let 
\[
\mathcal{K}_{n+1}= \{H_1\cup\cdots\cup H_{n+1}: H_i\in \mathcal{K}_n\; \text{for $1\leq i\leq n+1$}\}.
\]

Let $\varphi$ be the Mazur's submeasure defined in  the proof of Theorem \ref{mazur} for this family of closed hereditary sets.  Clearly $\mathcal{ED}=\fin(\varphi)$. 
We use Proposition \ref{pathological} to show that $\varphi$ is pathological. Pick a set $A=\{x_1, x_2, x_3\}$  such that $x_1\in B_0$ and $x_2, x_3\in B_1$. 
Notice that $\varphi(A)=2$ and  $\varphi(A\setminus\{y\})=1$ for all $y\in A$.

Consider the submeasure on $\N$ given by 
$$
\psi(A)=\min\{m\in\N :(\forall n> m)\vert A\cap B_n\vert\leq m\}.
$$
It is easy to see that $\fin(\psi)=\mathcal{ED}$. Now,  let us consider the family 
$$
\mathcal{S}=\{\mu_n^F: n\in\N, F\in[B_n]^{n+1}\}
$$
where, for each $n$ and $F$, $\mu_n^F$ is the counting measure supported on $F$.  Note that $\psi(A)=\sup\{ \mu(A):\mu\in\mathcal{S}\}$, since for every $A$ and every $m$ the following conditions are equivalent:
\begin{itemize}
    \item For all $n\geq m$, $\vert A\cap B_n\vert \leq m$, and
    \item For all $n$, and all $F\in [B_n]^{n+1}$, $\mu_n^F(A)\leq m$.
\end{itemize}
Hence, $\psi$ is nonpathological.

}
\end{ex}

For the sake of completeness,  we mention that both submeasures $\varphi$ and $\psi$ from the previous example remain pathological and nonpathological respectively, when they are restricted to
$\Delta=\bigcup_n C_n$, where each $C_n$ is a fixed subset of $B_n$ with cardinality $n+1$.  $\mathcal{ED}_{fin}$ denotes the restriction of $\mathcal{ED}$ to $\Delta$.  It is immediate to see that, in general,  every restriction of a nonpathological submeasure is nonpathological, while some restrictions of pathological submeasures are nonpathological.

\subsection{Degrees of pathology}
Farah's approach to pathology of submeasures on $\N$ includes a concept of degree of pathology  \cite{Farah2000}.  Associated to each lscsm $\varphi$, there is another lscsm $\widehat{\varphi}$, defined as follows. 
$$
\widehat{\varphi}(A)=\sup\{\mu(A):\mu \text{ is a measure dominated by }\varphi\},
$$
for all $A\subseteq \N$. Clearly $\widehat{\varphi}$ is the maximal nonpathological submeasure dominated by $\varphi$.

The \emph{degree of pathology}, which measures how far is a submeasure from being nonpathological, is defined by
$$
P(\varphi)=\sup\left\{\frac{\varphi(A)}{\widehat{\varphi}(A)}: \widehat{\varphi}(A)\neq 0 \; \& \; A\in \fin \right\}.
$$
Note that  $P(\varphi)=1$ if and only if $\varphi$ is nonpathological. Moreover, if $P(\varphi)=N<\infty$ then $\fin(\varphi)$ is equal to $\fin(\psi)$ for some  nonpathological submeasure $\psi$: In fact,  we have that  $\widehat{\varphi}\leq\varphi\leq N\widehat{\varphi}$, thus $\fin(\widehat{\varphi})\supseteq\fin(\varphi)\supseteq \fin(N\widehat{\varphi})$, and clearly  $\fin(\widehat{\varphi})= \fin(N\widehat{\varphi})$. 
Let us note that for every $F_{\sigma}$ ideal $\ideal$ and every $n>1$, there is a lscsm $\varphi$ such that  $\ideal=\fin(\varphi)$ and  $P(\varphi)\geq n$.  In fact, if  $\ideal=\fin(\varphi')$ is nonpathological, we can modify $\varphi'$, as we did in the proof of Proposition \ref{todostienepatologica}, by inserting a copy of the submeasure $\psi_{2n}$ defined in subsection \ref{examplemazur}. 
This proves that some nonpathological ideals may be defined by submeasures having arbitrarily large finite degrees of pathology.
Moreover, if $\varphi'$ is an arbitrary (in particular for nonpathological) submeasure and $A$ is an infinite set such that $\varphi'(A)<\infty$, we can modify $\varphi'$ by taking a submeasure $\psi$ on $A$ with $P(\psi)=\infty$ and $\psi(A)=\varphi'(A)$, and define $\varphi(B)=\max\{\psi(B\cap A),\varphi'(B\setminus A)\}$. 
Hence $\fin(\varphi)=\fin(\varphi')$ but $P(\varphi)=\infty$.
This construction proves that every $F_{\sigma}$ ideal properly containing $\fin$ is induced by a submeasure with infinite degree of pathology. 
However, this construction encapsulates the pathological part of the submeasure in a small set, what lefts the following question open.

\begin{question}
\label{absolut-patologico}
Is there a nonpathological $F_{\sigma}$ ideal $\ideal$ for which there is a lscsm $\varphi$ such that $\ideal=\fin(\varphi)$,  $P(\varphi)=\infty$ and $P(\varphi\upharpoonright A)<\infty$, for all $A\in\ideal$? 
\end{question}

In  light of the notion of degree of pathology, we can see that an $F_{\sigma}$ ideal $\ideal$ is pathological if and only if $P(\varphi)=\infty$ whenever $\varphi$ is a lscsm such that  $\fin(\varphi)=\ideal$. In the next section we present  an example of a pathological $F_{\sigma}$ ideal, as a particular case of a general method of constructing  pathological $F_{\sigma}$ ideals.

\subsection{Examples of pathological ideals}
K. Mazur constructed an $F_{\sigma}$ ideal which is not contained in any summable ideal \cite{Mazur91}. We show in this section that a  variation of Mazur's construction produces $F_{\sigma}$ ideals which are  not contained in any nonpathological ideal.

We need the concept of  a covering number similar to the one defined by J. Kelley  \cite{Kelley1959}. Given a finite set $K$, a covering $\mathcal{S}$ of $K$ and an element $i\in K$,  $B(i)$ denotes the cardinality of  $\{s\in\mathcal{S}:i\in s\}$ and $m(K,\mathcal{S})$ denotes the minimum of all  $B(i)$ for $i\in K$. The \emph{covering number of }$\mathcal{S}$ in $K$ is defined by
$$
\delta(K,\mathcal{S})=\frac{m(K,\mathcal{S})}{\vert \mathcal{S \vert}}.
$$

\begin{lema}
\label{lemakelleymazur}
Let $K$ be a finite set and $\mathcal{S}$ a covering of $K$. If $\pi$ is a probability measure on $K$, then there is $s\in\mathcal{S}$ such that $\pi(s)\geq \delta(K,\mathcal{S})$.
\end{lema}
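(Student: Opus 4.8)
The plan is to use an averaging argument: if every $s\in\mathcal{S}$ had $\pi(s)<\delta(K,\mathcal{S})$, then summing these inequalities over all $s\in\mathcal{S}$ would force $\sum_{i\in K}\pi(\{i\})<1$, contradicting that $\pi$ is a probability measure. To make this precise, first I would compute $\sum_{s\in\mathcal{S}}\pi(s)$ by interchanging the order of summation:
\[
\sum_{s\in\mathcal{S}}\pi(s)=\sum_{s\in\mathcal{S}}\sum_{i\in s}\pi(\{i\})=\sum_{i\in K}\pi(\{i\})\cdot|\{s\in\mathcal{S}:i\in s\}|=\sum_{i\in K}\pi(\{i\})\,B(i).
\]
Since $\mathcal{S}$ is a covering of $K$, every $i\in K$ lies in at least one $s$, so $B(i)\geq m(K,\mathcal{S})$ for all $i\in K$. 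Therefore
\[
\sum_{s\in\mathcal{S}}\pi(s)\;\geq\; m(K,\mathcal{S})\sum_{i\in K}\pi(\{i\})\;=\;m(K,\mathcal{S}),
\]
using that $\pi$ is a probability measure on $K$, i.e. $\sum_{i\in K}\pi(\{i\})=\pi(K)=1$.

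Now the averaging step: the left-hand side is a sum of $|\mathcal{S}|$ nonnegative terms whose total is at least $m(K,\mathcal{S})$, so at least one of them must be at least the average, i.e. there is $s\in\mathcal{S}$ with
\[
\pi(s)\;\geq\;\frac{m(K,\mathcal{S})}{|\mathcal{S}|}\;=\;\delta(K,\mathcal{S}),
\]
which is exactly the conclusion.

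There is essentially no obstacle here; the only minor points to watch are that $\mathcal{S}$ should be a multiset-free covering (or, if repetitions are allowed, the counting of $B(i)$ and $|\mathcal{S}|$ must be done consistently with multiplicity — the double-counting identity holds either way) and that $K\neq\emptyset$ so that $\pi(K)=1$ makes sense, which is implicit in $\pi$ being a probability measure. The heart of the lemma is just the Fubini-type exchange of summation combined with pigeonhole, and it is the natural finitary analogue of Kelley's classical intersection-number computation.
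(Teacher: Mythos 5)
Your proof is correct and follows exactly the same approach as the paper: double-count $\sum_{s\in\mathcal{S}}\pi(s)$ by swapping the order of summation, bound it below by $m(K,\mathcal{S})$ using that $\pi$ is a probability measure, and conclude by averaging over $\vert\mathcal{S}\vert$. The minor caveats you raise about multisets and nonemptiness of $K$ are fine to note but not needed in the paper's setting.
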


\begin{proof}
Let us note that
$$\sum_{s\in\mathcal{S}} \pi(s)=\sum_{i\in K} B(i)\pi(\{i\})\geq m(K,\mathcal{S})\sum_{i\in K}\pi(\{i\})=m(K,\mathcal{S}).$$
Therefore, there is $s\in\mathcal{S}$ such that $\pi(s)\geq \frac{m(K,\mathcal{S})}{\vert\mathcal{S}\vert}=\delta(K,\mathcal{S})$.
\end{proof}

An \emph{interval partition} of $\N$ is a family $\{I_n:n\in\N\}$ of intervals of $\N$ such that $\min I_0=0$ and $\min I_{n+1}=\max I_n +1$. The next theorem is about submeasures defined  using an interval partition, but it can be stated in terms of a family of  pairwise disjoint finite sets which covers a given countable set.

\begin{teo}\label{teogeneralinterval}
Let $\varphi$ be a lscsm on $\N$ such that  there is  $M>0$ and an interval partition $\{I_n:n\in\N\}$ satisfying
\begin{itemize}
    \item the family $\mathcal{B}=\{A\subseteq \N:\varphi(A)\leq M\}$ covers $\N$,
    \item $\sup_n\varphi(I_n)=\infty$, and
    \item $\varphi(B)=\sup\{\varphi(B\cap I_n):n\in\N\}$, for all $B\subseteq\N$.
\end{itemize}
Let $\mathcal{S}_n$ be a subfamily of $\mathcal{P}(I_n)\cap\mathcal{B}$ such that $\mathcal{S}_n$ covers $I_n$, $\epsilon_n=\delta(I_n,\mathcal{S}_n)$ and  $\delta=\inf\{\epsilon_n:n\in\N\}$. If $\delta>0$,  then
\begin{enumerate}
    \item[(i)]  $\widehat{\varphi}$ is bounded,
    \item[(ii)]  $P(\varphi)=\infty$, 
    \item[(iii)] $\fin(\varphi)$ is not contained in any nontrivial nonpathological $F_{\sigma}$ ideal, and
    \item[(iv)] $\fin(\varphi)$ is a pathological $F_{\sigma}$ ideal.
\end{enumerate}
\end{teo}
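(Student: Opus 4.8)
The plan is to prove the four items essentially in the order (ii) $\Rightarrow$ (i), then (iii), then (iv), using Lemma~\ref{lemakelleymazur} as the single nontrivial input. The heart of the matter is a uniform lower bound on the $\varphi$-value of a set in $\mathcal{S}_n$ that must be ``hit'' by any measure dominated by $\varphi$. First I would fix a measure $\mu\leq\varphi$ and a large $n$, and renormalize $\mu$ on the interval $I_n$: let $\pi_n=\mu/\mu(I_n)$ on $I_n$ (assuming $\mu(I_n)>0$; otherwise $\mu$ restricted to $I_n$ is trivially tiny). By Lemma~\ref{lemakelleymazur} applied to $K_n$ (which I read as $I_n$, the set covered by $\mathcal{S}_n$) there is $s\in\mathcal{S}_n$ with $\pi_n(s)\geq\epsilon_n\geq\delta$, so $\mu(s)\geq\delta\,\mu(I_n)$. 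On the other hand $s\in\mathcal{S}\subseteq\mathcal{B}$, so $\varphi(s)\leq M$, whence $\mu(s)\leq M$ and therefore $\mu(I_n)\leq M/\delta$. Since $\mu$ is a measure, $\mu(A\cap I_n)\leq M/\delta$ for every $A$, and because this bound is independent of $n$ we get $\mu(A)=\sup_n\mu(A\cap I_n)\le M/\delta$ — wait, a measure need not satisfy $\mu(A)=\sup_n\mu(A\cap I_n)$, so here I would instead argue $\mu(A)=\sum_n\mu(A\cap I_n)$ is not bounded in general; the correct route is to bound $\widehat\varphi$ directly: for any finite $A$, $\widehat\varphi(A)=\sup\{\mu(A):\mu\leq\varphi\}$, and since a finite $A$ meets only finitely many $I_n$ we cannot immediately bound it. The fix is to use the third hypothesis on $\varphi$ itself: $\widehat\varphi(A)=\sup_n\widehat\varphi(A\cap I_n)$ would follow if $\widehat\varphi$ inherited that property — and it does, because $\mu\leq\varphi$ implies $\mu(A\cap I_n)\leq\varphi(A\cap I_n)\leq\varphi(A)$, but more usefully $\widehat{\varphi}(A) = \sup\{\mu(A)\} $ and one checks $\widehat\varphi(B)=\sup_n\widehat\varphi(B\cap I_n)$ from $\varphi(B)=\sup_n\varphi(B\cap I_n)$ together with the fact that extremal dominated measures can be taken supported on a single $I_n$. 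Granting that, $\widehat\varphi(A)\leq M/\delta$ for all $A$, giving (i).

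For (ii): by hypothesis $\sup_n\varphi(I_n)=\infty$, while $\widehat\varphi(I_n)\leq M/\delta$ by (i); hence
$$
P(\varphi)=\sup_{A}\frac{\varphi(A)}{\widehat\varphi(A)}\geq\sup_n\frac{\varphi(I_n)}{\widehat\varphi(I_n)}\geq\frac{1}{M/\delta}\sup_n\varphi(I_n)=\infty.
$$
For (iii): suppose $\fin(\varphi)\subseteq\ideal$ with $\ideal=\fin(\nu)$ nonpathological, $\nu$ a nonpathological lscsm. Since $\widehat\varphi$ is bounded, $\fin(\widehat\varphi)=\mathcal{P}(\N)$, so $\widehat\varphi$ alone gives no information; instead use that $\nu$ is a supremum of dominated measures and that every $\varphi$-small set is $\nu$-small. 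From $\mathcal{B}\subseteq\fin(\varphi)\subseteq\fin(\nu)$ and $\mathcal{B}$ covering $\N$, a compactness/pigeonhole argument on each $I_n$ (together with Lemma~\ref{lemakelleymazur} applied to measures dominated by $\nu$) forces $\nu(I_n)$ to be uniformly bounded, say by some constant $C$; but then $\nu$ is bounded, so $\fin(\nu)=\mathcal{P}(\N)$ is trivial, contradiction. Finally (iv): $\fin(\varphi)$ is $F_\sigma$ by Mazur (Theorem~\ref{mazur}), and it is pathological exactly because any $\psi$ with $\fin(\psi)=\fin(\varphi)$ must still have $\sup_n\psi(I_n)=\infty$ (else $\fin(\psi)$ would contain a set not in $\fin(\varphi)$), while the covering hypotheses pass to $\psi$ up to the constant $M$ by replacing $M$ with $\psi$'s value on a finite cover, so the argument for (ii) reruns verbatim to give $P(\psi)=\infty$; since this holds for every such $\psi$, $\fin(\varphi)$ is pathological by the characterization noted before Question~\ref{absolut-patologico}.

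The main obstacle I anticipate is making precise the passage ``$\widehat\varphi(A)=\sup_n\widehat\varphi(A\cap I_n)$'' — i.e.\ that the extremal dominated measures can be localized to a single block $I_n$. This requires showing that if $\mu\leq\varphi$ then the measure $\mu_n:=\mu\restriction I_n$ still satisfies $\mu_n\leq\varphi$ (clear, since $\mu_n(A)=\mu(A\cap I_n)\leq\varphi(A\cap I_n)\leq\varphi(A)$ using the third hypothesis crucially in the \emph{reverse} direction $\varphi(A\cap I_n)\le\varphi(A)$, which is just monotonicity), and then that $\sup_n\mu_n(A)=\sup_n\mu(A\cap I_n)$ can be as large as $\mu(A)$ only when $A$ is spread across blocks — but for the boundedness of $\widehat\varphi$ we only need each $\widehat\varphi(A)$ finite and we get the uniform bound $M/\delta$ on each block, which already suffices to kill $P(\varphi)$ in (ii) via the sets $I_n$ directly. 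So in the write-up I would avoid claiming a global $\sup_n$ identity for $\widehat\varphi$ and instead state (i) as ``$\widehat\varphi$ is bounded'' proved through: $\widehat\varphi(I_n)\le M/\delta$ for all $n$ plus $\widehat\varphi(A)=\sup_n\widehat\varphi(A\cap I_n)$ — which I will derive cleanly from the structural hypothesis on $\varphi$ and the observation that a dominated measure restricted to a block is still dominated. The second delicate point is hypothesis-matching: the theorem writes $\mathcal{S}_n\subseteq\mathcal{P}(I_n)\cap\mathcal{S}$ and ``$\mathcal{S}_n$ covers $K_n$'' — I will take $K_n=I_n$ (or, if $K_n$ denotes the support, note $\delta(K_n,\mathcal{S}_n)=\delta(I_n,\mathcal{S}_n)$ under the stated covering condition) so that Lemma~\ref{lemakelleymazur} applies directly.
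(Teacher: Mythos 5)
Your outline of (i) starts along the right lines, but it hits a gap at exactly the place you flag: the identity
$\widehat{\varphi}(A)=\sup_n\widehat{\varphi}(A\cap I_n)$ is not proved, and the observation that a dominated measure restricted to a single block is still dominated gives only the trivial inequality $\widehat{\varphi}(A)\geq\sup_n\widehat{\varphi}(A\cap I_n)$; you never establish the direction you actually need. The paper avoids this entirely by a different, more global use of Lemma~\ref{lemakelleymazur}. Take any finitely supported $\mu\leq\varphi$, decompose its support across the finitely many blocks $I_j$ ($j\in F$), write $\mu\restriction I_j=\lambda_j\pi_j$ with $\pi_j$ a probability measure, and pick $s_j\in\mathcal{S}_j$ with $\pi_j(s_j)\geq\epsilon_j\geq\delta$. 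The decisive step, which your plan misses, is to look at the single set $\bigcup_{j\in F}s_j$: because the $s_j$ sit in disjoint blocks and each has $\varphi(s_j)\leq M$, the third hypothesis $\varphi(B)=\sup_n\varphi(B\cap I_n)$ gives $\varphi\bigl(\bigcup_{j\in F}s_j\bigr)\leq M$, even though this set meets many blocks. Then $\delta\sum_j\lambda_j\leq\sum_j\lambda_j\pi_j(s_j)=\mu\bigl(\bigcup_j s_j\bigr)\leq\varphi\bigl(\bigcup_j s_j\bigr)\leq M$, so $\mu(\N)=\sum_j\lambda_j\leq M/\delta$ uniformly. That bounds $\widehat{\varphi}$ directly, with no need to localize extremal measures.

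Your treatment of (iii) is not correct as written. A ``compactness/pigeonhole argument'' that forces $\nu(I_n)$ to be uniformly bounded is not given, and even if one had such a bound it would not make $\nu$ bounded (the masses over disjoint blocks add, so $\nu\bigl(\bigcup_{n\leq N}I_n\bigr)$ can still go to infinity). What the paper actually does is constructive: given a nonpathological lscsm $\psi=\sup\mathcal{R}$ with $\fin(\psi)\neq\mathcal{P}(\N)$, it picks blocks of intervals over which $\psi$ exceeds $n$, chooses $\mu_n\in\mathcal{R}$ witnessing this, applies Lemma~\ref{lemakelleymazur} on each interval in the block to get $s_j\in\mathcal{S}_j$ with $\mu_n(s_j)\geq\delta\mu_n(I_j)$, and sets $B_n=\bigcup_j s_j$. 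Then $\psi(B_n)\geq\mu_n(B_n)\geq\delta n$ while $\varphi(B_n)\leq M$ (same observation as above), so $B=\bigcup_n B_n$ lies in $\fin(\varphi)\setminus\fin(\psi)$. This produces the explicit witness; no boundedness of $\nu(I_n)$ is needed or claimed. Finally, for (iv) you should not try to rerun the $\delta$-argument for an arbitrary $\psi$ with $\fin(\psi)=\fin(\varphi)$ (such $\psi$ need not respect the interval structure or the covering family); (iv) is simply a formal consequence of (iii), since a nonpathological $\fin(\varphi)$ would itself be a nontrivial nonpathological ideal containing $\fin(\varphi)$.
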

\begin{proof}
Note that (ii) follows immediately from (i), and (iv) follows from (iii). Let us prove (i).
Let  $\mu$ be a finitely supported  measure dominated by $\varphi$. Then there are:
\begin{enumerate}
    \item a finite set $F\subseteq\N$,
    \item a probability measure $\pi_j$ on $I_j$, for each $j\in F$, and
    \item $\lambda_j>0$ for each $j\in F$ 
\end{enumerate}
such that the support of $\mu$ is contained in $\bigcup_{j\in F}I_j$ and $\mu\upharpoonright I_j=\lambda_j\pi_j$. 
By Lemma \ref{lemakelleymazur}, there is $s_j$ in $\mathcal{S}_j$ such that $\pi_j(s_j)\geq\epsilon_j$. Then, 
$$
0<\delta\sum_{j\in F} \lambda_j\leq \sum_{j\in F}\lambda_j\epsilon_j\leq\sum_{j\in F}\lambda_j\pi_j(s_j)=\mu(\bigcup_{j\in F}s_j)\leq \varphi(\bigcup_{j\in F} s_j)\leq M.
$$
Hence $\sum_{j\in F}\lambda_j\leq \frac{M}{\delta}$. Since $\mu(\bigcup_{j\in F}I_j)=\sum_{j\in F}\lambda_j\pi_j(I_j)=\sum_{j\in F}\lambda_j$, it follows that $\mu$ is bounded by $\frac{M}{\delta}$. Since $M$ and $\delta$ do not depend on $\mu$, we are done.

For (iii), 
we will prove that if $\psi=\sup \mathcal{M}$ for some family $\mathcal{M}$ of measures, and it is nontrivial in the sense that $\fin(\psi)\neq \mathcal{P}(\N)$,
then there is a set $B\in \fin(\varphi)$ which is not in $\fin(\psi)$.
For each $n>0$,  define $k_n\in\N$ recursively: $k_0=0$ and 
$$
k_{n+1}=\min\left\{j>k_n: \psi\left(\bigcup_{k_n\leq i<j}I_j\right)>n\right\}.
$$
Notice that $k_{n+1}$ is well defined since $\bigcup_{j\geq m}I_j\not\in\fin(\psi)$ for all $m$.
Let us pick a measure $\mu_n\in \mathcal{R}$ such that 
$$
\mu_n\left(\bigcup_{j=k_n}^{k_{n+1}-1}I_j\right)\geq n.
$$
By Lemma \ref{lemakelleymazur}, for each $k_n\leq j<k_{n+1}$, there is $s_j\in \mathcal{S}_j$ such that $(\mu_n\upharpoonright I_j)(s_j)\geq \delta \mu_n(I_j)$. 
Let us define $B_n=\bigcup_{j=k_n}^{k_{n+1}-1}s_j$.
Hence, 
$$\psi(B_n)\geq\mu_n\left(B_n\right)\geq\sum_{j=k_n}^{k_{n+1}-1}\delta\mu_n(I_j)=\delta\mu_n\left(\bigcup_{j=k_n}^{k_{n+1}-1}I_j\right)\geq\delta n.$$
On the other hand, $\varphi(B_n)\leq M$.
Hence $B=\bigcup B_n$ is in $\fin(\varphi)$ and is not in $\fin(\psi)$.
\end{proof}

\subsubsection{Mazur's example}
\label{examplemazur}

We present an example of a pathological ideal using Theorem \ref{teogeneralinterval}. Consider the following families of sets:

\begin{itemize}
    \item Let $K_n$ be  the set of all functions from $n$ to $m=2n$,  and
    \item $\mathcal{S}_n=\{\hat{i}:i=0,\dots, m-1\}$ where $\hat{i}=\{f\in K_n: i\notin range(f)\}$.
\end{itemize}

Note that for each $n>1$, $\mathcal{S}_n$ is a covering of $K_n$, and  no  subset of $\mathcal{S}_n$  with at most $n$ sets is a covering of $K_n$.
Moreover, every $f:n\to m$ avoids at least $n$ values in $m$. 
That proves that $m(K_n,\mathcal{S}_n)=n$, and since $\vert \mathcal{S}_n \vert=m$, we have that $\epsilon_n=\delta(K_n,\mathcal{S}_n)=\frac{1}{2}$. 
Let  $\psi_n$ be the subsmeasure on $K_n$ defined by
$$
\psi_n(A)=\min\{r:\exists b\in[m]^r A\subseteq\bigcup_{i\in b}\hat{i}\},
$$
for all $A\subseteq K_n$.
By identifying the sets $K_n$ with members of the corresponding interval partition, we can define the lscsm $\psi$ by
$$
\psi(A)=\sup\{\psi_n(A\cap K_n):n\in\N\}.
$$
Hence, we have that $\psi(K_n)=n+1$ and the family $\mathcal{B}=\{A\subseteq \N:\psi(A)\leq 1\}$ is a covering of $\N$. By Theorem \ref{teogeneralinterval}, $\mathcal{M}=\fin(\psi)$ is a pathological ideal. For the sake of completeness, let us note that $P(\psi_n)=\frac{n+1}{2}$.

\subsection{Submeasures with infinite pathological degree}
In this section we present a sufficient condition for a submeasure to have infinite pathological degree.

\begin{teo}\label{teogeneralincreasingpath}
Let $\varphi$ be an unbounded lscsm on $\mathbb{N}$. Suppose there is $M>0$ such that the family
$$\mathcal{B}=\{A\subseteq \N:\varphi(A)\leq M\}$$
is a covering of $\N$. Let $\{K_n:n\in\N\}$ be a strictly increasing sequence of finite sets such that $\bigcup K_n=\N$ 
and $\mathcal{S}_n$ a subfamily of $\mathcal{P}(K_n)\cap\mathcal{B}$ such that $\mathcal{S}_n$ covers $K_n$. Let 
$\delta_n=\delta(K_n,\mathcal{S}_n)$ and $\delta=\inf\{\delta_n:n\in\N\}$. If $\delta>0$, then $\widehat{\varphi}$ is bounded and  $P(\varphi)=\infty$. 
\end{teo}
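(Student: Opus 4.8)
The plan is to adapt the proof of Theorem~\ref{teogeneralinterval}(i). The only place that argument used the interval partition was to control $\varphi$ on a union of small pieces coming from different intervals, via the hypothesis $\varphi(B)=\sup_n\varphi(B\cap I_n)$; here that hypothesis is unavailable (and would not even make sense for a nested family), but it is not needed, because any \emph{finitely supported} measure dominated by $\varphi$ has its support contained in a single $K_n$, so one application of Lemma~\ref{lemakelleymazur} on that $K_n$ does the whole job. So the proof splits into: (a) reduce the boundedness of $\widehat\varphi$ to a uniform bound on the total mass of finitely supported measures dominated by $\varphi$; (b) prove that bound; (c) deduce $P(\varphi)=\infty$ from the unboundedness of $\varphi$ together with the boundedness of $\widehat\varphi$.

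For (a), let $\mu$ be any measure with $\mu\leq\varphi$ and, for each $n$, let $\mu\restriction K_n$ denote the measure $B\mapsto\mu(B\cap K_n)$; it is finitely supported and still dominated by $\varphi$, since $\mu(B\cap K_n)\leq\varphi(B\cap K_n)\leq\varphi(B)$. As $K_n\uparrow\N$, $\sigma$-additivity gives $\mu(A)=\sup_n\mu(A\cap K_n)\leq\sup_n(\mu\restriction K_n)(\N)$ for all $A$, so $\widehat\varphi(A)\leq\sup\{\nu(\N):\nu\leq\varphi,\ \nu\text{ finitely supported}\}$. For (b), fix such a $\nu\neq 0$; its support lies in some $K_n$, and writing $\lambda=\nu(\N)>0$ and $\pi=\nu/\lambda$ (a probability measure on $K_n$), Lemma~\ref{lemakelleymazur} applied to $\pi$ and the covering $\mathcal{S}_n$ of $K_n$ gives $s\in\mathcal{S}_n$ with $\pi(s)\geq\delta(K_n,\mathcal{S}_n)=\delta_n\geq\delta$. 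Since $s\in\mathcal{S}_n\subseteq\mathcal{B}$ we have $\varphi(s)\leq M$, whence
\[
\delta\lambda\ \leq\ \lambda\pi(s)\ =\ \nu(s)\ \leq\ \varphi(s)\ \leq\ M,
\]
so $\nu(\N)=\lambda\leq M/\delta$. Combining (a) and (b), $\widehat\varphi(A)\leq M/\delta$ for all $A$, and since $\delta>0$ this proves $\widehat\varphi$ is bounded.

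For (c): by lower semicontinuity, $\varphi$ being unbounded means $\sup\{\varphi(F):F\in\fin\}=\infty$, so choose finite sets $A_k$ with $\varphi(A_k)\to\infty$. Each has $\widehat\varphi(A_k)\neq 0$: by subadditivity $\varphi(A_k)>0$ forces $\varphi(\{j\})>0$ for some $j\in A_k$, and the measure putting mass $\varphi(\{j\})$ at $j$ and $0$ elsewhere is dominated by $\varphi$ (by monotonicity of $\varphi$), so $\widehat\varphi(A_k)\geq\widehat\varphi(\{j\})\geq\varphi(\{j\})>0$. Hence $\varphi(A_k)/\widehat\varphi(A_k)\geq\delta\,\varphi(A_k)/M\to\infty$, so $P(\varphi)=\infty$. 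No serious difficulty is expected; the only steps needing a little care are the reduction in (a) — where $\sigma$-additivity and $\bigcup_n K_n=\N$ are essential — and verifying in (c) that $\widehat\varphi$ does not vanish on the sets that witness the unboundedness of $\varphi$, so that the supremum defining $P(\varphi)$ is genuinely taken over admissible sets.
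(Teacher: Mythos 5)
Your proof is correct and follows essentially the same approach as the paper: reduce to finitely supported measures, normalize, apply Lemma~\ref{lemakelleymazur} on the $K_n$ containing the support, and use $\varphi(s)\leq M$ for $s\in\mathcal{S}_n$ to get the uniform bound $M/\delta$. Your parts~(a) and~(c) merely spell out reductions that the paper leaves implicit (the passage from $\widehat\varphi$ bounded on finitely supported measures to bounded on all sets, and the deduction of $P(\varphi)=\infty$ from the boundedness of $\widehat\varphi$ together with the unboundedness of $\varphi$, including the check that $\widehat\varphi$ is nonzero on the witnessing finite sets).
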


\begin{proof}
Note that it is enough to prove that there is a uniform bound for all measures dominated by $\varphi$.
Let  $\mu$ be a finitely supported measure dominated by $\varphi$. Then, there are:
\begin{enumerate}
    \item $n\in\N$,
    \item a probability measure $\pi$ on $K_n$, and
    \item $\lambda>0$ 
\end{enumerate}
such that the support of $\mu$ is contained in $K_n$ and $\mu=\lambda\pi$. By Lemma \ref{lemakelleymazur}, there is $s$ in $\mathcal{S}_n$ such that  $\pi(s)\geq\delta_n$. Thus, 
$$
0<\lambda\delta\leq \lambda\delta_n\leq\lambda\pi(s)=\mu(s)\leq \varphi(s)\leq M.
$$
Hence $\lambda\leq \frac{M}{\delta}$. Since $\mu(K_n)=\lambda\pi(K_n)=\lambda$, it follows that $\mu$ is bounded by $\frac{M}{\delta}$. Since $M$ and $\delta$ do not depend from $n$, we are done.
\end{proof}

We remark that the collection $\mathcal{B}$ mentioned above is a covering of $\N$ iff $\{n\}\in \mathcal{B}$ for all $n\in\N$.
This requirement is easy to satisfy, for instance it holds if $\fin(\varphi)$ is tall.

\bigskip 

It remains open the following question:

\begin{question} Let $\varphi$ be a lscsm on $\N$ and $(K_n, \mathcal{S}_n)_n$ and $\mathcal{B}$ as in the hypothesis of Theorem \ref{teogeneralincreasingpath}. Suppose  $\widehat{\varphi}$ is bounded, is  $\delta>0$? If $\delta=0$, is  $\fin(\varphi)$  nonpathological? 
\end{question}

The calculation of $\delta$ for a given submeasure may not be easy to do. 
In the next section  we present two examples illustrating this computation. However,   we can naturally associate sequences $(K_n)_n$ and $(\mathcal{S}_n)_n$ to every lscsm given by Theorem \ref{mazur} and thus, in principle, we can calculate  the corresponding $\delta$.

\subsubsection{Solecki's ideal}
The ideal $\mathcal{S}$ is defined \cite{Solecki2000} on the countable set $\Omega$ of all clopen subsets of the Cantor set $2^{\N}$ whose  measure\footnote{The measure considered here is the product measure of $2^{\N}$ where $2$ is equipped with its uniform probability measure.} is equal to $\frac{1}{2}$ and it is generated  by the sets of the form $E_x=\{a\in \Omega: x\in a\}$ for $x\in\cantor$.
Hru\v{s}\'ak's Measure Dichotomy \cite{Hrusak2017} establishes that any pathological analytic P-ideal has a restrictions to a positive set which is  Kat\v{e}tov above  $\mathcal{S}$. Solecki's ideal is critical for the class of analytic P-ideals but is not a P-ideal. It is a tall $F_{\sigma}$ ideal but  we do not know if it is pathological, although, we show below that  there is a lscsm $\chi$ such that $\mathcal{S}=\fin(\chi)$ with $P(\chi)=\infty$.

Consider a lscsm $\chi$ on $\Omega$ given by 
$$
\chi(A)=\min\{n: \;\exists{x_1}, \cdots, x_n\in  2^{\N},\; \; A\subseteq E_{x_1}\cup\cdots \cup E_{x_n}\}.
$$
It is clear that $\mathcal{S}=\fin(\chi)$. We will show that it satisfies the hypothesis of Theorem \ref{teogeneralincreasingpath} and therefore $P(\chi)=\infty$.

Let us denote by $\langle s\rangle$ the clopen set $\{x\in 2^{\mathbb{N}}:s\subseteq x\}$, for $s\in 2^{<\omega}$.
For every $n>1$, we define $$
\Omega_n=\{b\in\Omega: (\forall s\in 2^n)(\langle s\rangle\subseteq b \text{ or } \langle s \rangle \cap b=\emptyset)\}.
$$ 
Notice that $\Omega_n$ is an increasing sequence of sets whose union is equal to $\Omega$: Let $a\in\Omega$, since $a$  is a compact set,   select a finite covering $\{\langle c_1\rangle,\dots \langle c_r\rangle\}$ of $a$ with $c_1,\dots c_r\in 2^{<\omega}$. Let $n$ be the length of the longest $c_j$. Then  $a\in \Omega_n$.

For a given $s\in 2^n$, let $\tilde{s}=\{b\in\Omega_n:\langle s\rangle \subseteq b\}$ and  $\mathcal{S}_n$ be  the family $\{\tilde{s}:s\in 2^n\}$.
Note that $\chi(\tilde{s})=1$ for all $s$. Moreover,  $\chi(A)=\min\{k:\exists s_1,\dots,s_k\in 2^n: A\subseteq\bigcup_{i=1}^k\tilde{s_i}\}$, for all $A\subseteq \Omega_n$. 

Note that for every $B\in[2^n]^{2^{n-1}}$, $\bigcup_{s\in B}\tilde{s}\neq \Omega_n$, while for every $C\in[2^n]^{2^{n-1}+1}$, $\bigcup_{s\in C}\tilde{s} = \Omega_n$. Thus $\chi(\Omega_n)=2^{n-1}+1$. 
Also note that each $a\in\Omega_n$ belongs exactly  to $2^{n-1}$ many sets in $\mathcal{S}_n$. Thus, $\delta(\Omega_n,\mathcal{S}_n)=\frac{1}{2}$ for all $n$.

\subsubsection{The ideal $\mathcal{ED}_{fin}$}

Recall that in   Example \ref{ED} we  show that $\mathcal{ED}_{fin}=\fin(\varphi)$  where $\varphi$ is defined on $\Delta=\bigcup_n C_n$, where $C_n$ is a subset of $B_n$ of size $n+1$, and  is given by $\varphi(A)=\sup\{\mu_n(A\cap C_n):n\in\N\}$ for  $\mu_n$ the counting measure on $C_n$. Consider
\begin{itemize}
    \item $K_n=\bigcup_{j\leq n}C_n$, 
    \item $\mathcal{S}_0=\{C_0\}$ and $\mathcal{S}_{n+1}=\{s\cup\{j\}:s\in\mathcal{S}_n \;\&\; j\in C_{n+1}\}$
\end{itemize}
for all $n$. Notice that $|\mathcal{S}_{n}|=(n+1)!$ 

In Example \ref{ED} we have shown that $\varphi$ is non pathological, i.e. $P(\varphi)=1$. On the other hand,   we show next that $\delta_n=\delta(K_n,\mathcal{S}_n)=\frac{1}{n+1}$, for all $n\geq 1$. 
Let $k\leq n$ and $i,j\in C_k$.
It is easy to verify that $B(i)=B(j)=(n+1)!/k$, i.e. $i$  belongs  to as many elements of $\mathcal{S}_n$ as $j$ does. Thus  $\delta(K_n,\mathcal{S}_n)=\frac{1}{n+1}$. 

\subsection{Degrees of pathology and Rudin-Keisler order}
For a given ideal $\idealj$ and a function $f:\N\to \N$, an ideal $f(\idealj)$  is defined  as follows: 
$$
f(\idealj)=\{A\su \N:f^{-1}(A)\in\idealj\}.
$$
An ideal $\ideal$ is said to be  \emph{Rudin-Keisler} below $\idealj$ if $\ideal=f(\idealj)$ for some $f$, denoted $\ideal\leq_{RK}\idealj$ and $f$ is called  a Rudin-Keisler reduction of $\idealj$ in $\ideal$. The \emph{Kat\v{e}tov order} is defined by $\ideal\leq_{K}\idealj$ if $\ideal\subseteq\ideal'$ for some $\ideal'\leq_{RK}\idealj$.
The analytic P-ideals have an elegant classification in the Kat\v{e}tov order, given by the Hrusak's Measure Dichotomy (Theorem 4.1 in \cite{Hrusak2017}). Such classification uses the degrees of pathology.

We now show how Rudin-Keisler order impacts on degrees of pathology.

\begin{lema}
\label{RKdegree}
Let $\ideal$ and $\idealj$ be $F_{\sigma}$ ideals, $\varphi$ a lscsm such that $\idealj=\fin(\varphi)$ and $f$ a Rudin-Keisler reduction of $\idealj$ in $\ideal$. Let $\varphi_f$ be defined by
$$
\varphi_f(A)=\varphi(f^{-1}(A))
$$
for all $A\subseteq\N$.
Then, the following hold.
\begin{enumerate}
    \item $\varphi_f$ is a lscsm and $\ideal=\fin(\varphi_f)$,
    \item if $\varphi$ is a measure then $\varphi_f$ is also a measure,
    \item if $\nu$ is a measure dominated by $\varphi$ then $\nu_f$ is a measure dominated by $\varphi_f$, 
    \item for all $A\subseteq \N$, $\widehat{\varphi_f}(A)\geq\widehat{\varphi}(f^{-1}(A))$, and
    \item $P(\varphi_f)\leq P(\varphi)$.
\end{enumerate}
\end{lema}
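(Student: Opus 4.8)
The five claims are all consequences of unwinding the definition of $\varphi_f$ together with the elementary fact that $f^{-1}$ commutes with the Boolean operations and preserves monotone limits along initial segments. I would dispatch them in the order (1)–(5), since later claims lean on earlier ones.

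For (1), I would check the submeasure axioms one at a time: $\varphi_f(\emptyset)=\varphi(f^{-1}(\emptyset))=\varphi(\emptyset)=0$; monotonicity and subadditivity follow from $f^{-1}(A)\su f^{-1}(A\cup B)=f^{-1}(A)\cup f^{-1}(B)$ together with the corresponding properties of $\varphi$; and $\varphi_f(\{n\})=\varphi(f^{-1}(\{n\}))$ need not be finite a priori, so here one must invoke that $f$ is a Rudin--Keisler reduction of $\idealj=\fin(\varphi)$ in $\ideal$, hence $\{n\}\in\ideal$ forces $f^{-1}(\{n\})\in\idealj=\fin(\varphi)$, i.e. $\varphi(f^{-1}(\{n\}))<\infty$. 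For lower semicontinuity, write $f^{-1}(A\cap\{0,\dots,k\})$ as an increasing union (over $k$) of subsets of $f^{-1}(A)$ whose union is $f^{-1}(A)$, and use lower semicontinuity of $\varphi$ along initial segments — a small point is that $f^{-1}(A)\cap\{0,\dots,m\}$ and $f^{-1}(A\cap\{0,\dots,k\})$ are cofinal in each other as $k,m\to\infty$, so the two limits agree. Finally $A\in\fin(\varphi_f)\iff\varphi(f^{-1}(A))<\infty\iff f^{-1}(A)\in\fin(\varphi)=\idealj\iff A\in f(\idealj)=\ideal$, which is (1).

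Claim (2) is immediate: if $\varphi$ is additive on disjoint sets, then for disjoint $A,B$ the sets $f^{-1}(A),f^{-1}(B)$ are disjoint, so $\varphi_f(A\cup B)=\varphi(f^{-1}(A)\cup f^{-1}(B))=\varphi(f^{-1}(A))+\varphi(f^{-1}(B))=\varphi_f(A)+\varphi_f(B)$ (and $\sigma$-additivity is handled the same way with the lower-semicontinuity/limit clause). Claim (3) combines (2) applied to the measure $\nu$ with the pointwise inequality: $\nu\leq\varphi$ gives $\nu(f^{-1}(A))\leq\varphi(f^{-1}(A))$, i.e. $\nu_f\leq\varphi_f$. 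For (4), take any measure $\nu\leq\varphi$; by (3), $\nu_f\leq\varphi_f$, hence $\widehat{\varphi_f}(A)\geq\nu_f(A)=\nu(f^{-1}(A))$; now take the supremum over all such $\nu$ on the right to get $\widehat{\varphi_f}(A)\geq\widehat{\varphi}(f^{-1}(A))$.

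For (5), let $A\in\fin$ with $\widehat{\varphi_f}(A)\neq0$. If also $\widehat{\varphi}(f^{-1}(A))\neq0$, then
$$
\frac{\varphi_f(A)}{\widehat{\varphi_f}(A)}=\frac{\varphi(f^{-1}(A))}{\widehat{\varphi_f}(A)}\leq\frac{\varphi(f^{-1}(A))}{\widehat{\varphi}(f^{-1}(A))}\leq P(\varphi),
$$
using (4) in the middle step; note $f^{-1}(A)$ need not be finite, but since $\varphi_f(A)\leq\varphi_f(\N_{<\infty})$-type bounds are not needed — the ratio $\varphi(C)/\widehat{\varphi}(C)$ is $\leq P(\varphi)$ for \emph{every} $C$ with $\widehat\varphi(C)\ne0$, as $P(\varphi)$ can equivalently be computed over all $C$ by lower semicontinuity, so finiteness of $f^{-1}(A)$ is not an issue. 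The one thing to rule out is the degenerate case $\widehat{\varphi}(f^{-1}(A))=0$ while $\widehat{\varphi_f}(A)>0$: but (4) says $\widehat{\varphi_f}(A)\geq\widehat{\varphi}(f^{-1}(A))$, so $\widehat{\varphi_f}(A)>0\Rightarrow$ there is $\nu\le\varphi$ with $\nu(f^{-1}(A))>0$, contradicting $\widehat\varphi(f^{-1}(A))=0$; hence this case is vacuous. Taking the supremum over $A$ yields $P(\varphi_f)\leq P(\varphi)$.

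The only genuinely delicate point is the comparison of limits in the lower-semicontinuity verification (matching $\varphi_f$'s required limit over $\{0,\dots,n\}\su\N$ against $\varphi$'s limit over initial segments of $f^{-1}(A)$), together with the careful bookkeeping in (5) about which ratio-supremum one is allowed to restrict to; everything else is a routine transfer of properties across $f^{-1}$.
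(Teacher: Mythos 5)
Your proof follows essentially the same route as the paper: the paper declares (1)--(3) routine, and for (4) and (5) simply writes the chain of inequalities you reproduce. You fill in the routine details correctly (the lower semicontinuity check in (1), via increasing unions of the $f^{-1}(A\cap\{0,\dots,k\})$, is the right idea), and you notice two subtleties in (5) that the paper's terse proof passes over: that $f^{-1}(A)$ need not be finite (which you handle correctly by extending the supremum defining $P(\varphi)$ from finite sets to all sets via lower semicontinuity), and that $\widehat{\varphi}(f^{-1}(A))$ might vanish.

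On that last point your justification is backwards, though. Item (4) gives $\widehat{\varphi_f}(A)\geq\widehat{\varphi}(f^{-1}(A))$, which bounds the wrong side: from $\widehat{\varphi_f}(A)>0$ one cannot deduce anything about $\widehat{\varphi}(f^{-1}(A))$ via (4). Nor does a measure $\mu\leq\varphi_f$ witnessing $\widehat{\varphi_f}(A)>0$ automatically arise as $\nu_f$ for some $\nu\leq\varphi$, so the ``there is $\nu\leq\varphi$'' step is unjustified. The correct (and simpler) argument is direct: $\widehat{\varphi_f}(A)>0$ forces $\varphi_f(A)=\varphi(f^{-1}(A))>0$, so by lower semicontinuity and subadditivity some $m\in f^{-1}(A)$ has $\varphi(\{m\})>0$; since $\varphi(\{m\})\delta_m$ is a measure dominated by $\varphi$, we get $\widehat{\varphi}(f^{-1}(A))\geq\varphi(\{m\})>0$. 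With that repair your degenerate case really is vacuous and the rest of (5) stands.
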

\begin{proof}
(1)-(3) are routine. For (4), $\widehat{\varphi_f}(A)=\sup\{\nu(A):\nu\text{ is a measure dominated by }\varphi_f\}\geq\sup\{\mu_f(A):\mu\text{ is a measure dominated by }\varphi\}=\sup\{\mu(f^{-1}(A)):\mu\text{ is a measure dominated by }\varphi\}=\widehat{\varphi}(f^{-1}(A))$, for all $A\subseteq\N$. 
For (5) we use (4) to argue that for all $A\subseteq\N$,
$$\frac{\varphi_f(A)}{\widehat{\varphi_f}(A)} \leq \frac{\varphi(f^{-1}(A))}{\widehat{\varphi}(f^{-1}(A))}\leq P(\varphi).$$
Hence, $P(\varphi_f)\leq P(\varphi)$.
\end{proof}

\begin{teo}\label{teorkpato}
Let $\ideal$ and $\idealj$ be $F_{\sigma}$ ideals such that $\ideal\leq_{RK}\idealj$. If $\idealj$ is nonpathological then $\ideal$ is nonpathological.
\end{teo}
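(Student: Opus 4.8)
The plan is to read this off Lemma \ref{RKdegree} together with the characterization ``$P(\psi)=1$ iff $\psi$ is nonpathological''. Since $\idealj$ is nonpathological, fix a nonpathological lscsm $\varphi$ with $\idealj=\fin(\varphi)$, so $P(\varphi)=1$; equivalently, $\varphi=\sup\mathcal{M}$ where $\mathcal{M}$ is the family of all ($\sigma$-additive) measures dominated by $\varphi$. Fix a Rudin--Keisler reduction $f$ with $\ideal=f(\idealj)$ and let $\varphi_f$ be the submeasure $\varphi_f(A)=\varphi(f^{-1}(A))$ introduced in Lemma \ref{RKdegree}. By part (1) of that lemma, $\varphi_f$ is a lscsm and $\ideal=\fin(\varphi_f)$, and by part (5), $P(\varphi_f)\le P(\varphi)=1$.

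It then suffices to note the trivial reverse inequality $P(\varphi_f)\ge 1$, which holds for any lscsm whose value at $\N$ is positive: $\widehat{\varphi_f}\le\varphi_f$ by definition, so every ratio $\varphi_f(A)/\widehat{\varphi_f}(A)$ with $\widehat{\varphi_f}(A)\neq 0$ is $\ge 1$, and such an $A$ exists because subadditivity and lower semicontinuity force $\varphi_f(\{n\})>0$ for some $n$ (here $\varphi_f(\N)=\varphi(\N)=\infty$ since $\N\notin\idealj$), whence the point mass $\varphi_f(\{n\})\cdot\delta_n$ is a measure dominated by $\varphi_f$ and $\widehat{\varphi_f}(\{n\})\ge\varphi_f(\{n\})>0$. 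Therefore $P(\varphi_f)=1$, i.e.\ $\varphi_f$ is nonpathological, and hence $\ideal=\fin(\varphi_f)$ is nonpathological, as desired.

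As an alternative that bypasses the degree of pathology entirely, one could argue directly: by parts (2)--(3) of Lemma \ref{RKdegree} each $\mu_f$ with $\mu\in\mathcal{M}$ is a measure dominated by $\varphi_f$, and for every $A\su\N$ one has $\sup_{\mu\in\mathcal{M}}\mu_f(A)=\sup_{\mu\in\mathcal{M}}\mu(f^{-1}(A))=\varphi(f^{-1}(A))=\varphi_f(A)$, so $\varphi_f$ is itself a supremum of measures dominated by it. I do not expect any real obstacle here: the statement is an immediate corollary of Lemma \ref{RKdegree}, and the only points worth spelling out are the harmless bound $P(\varphi_f)\ge 1$ in the first route, or the verification that $\mu_f$ is genuinely $\sigma$-additive and that the pointwise supremum computation is legitimate in the second.
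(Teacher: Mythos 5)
Your first route is exactly the paper's one-line proof: apply part (5) of Lemma~\ref{RKdegree} to get $P(\varphi_f)\le P(\varphi)=1<\infty$ and invoke the earlier observation that finite pathology degree of a defining submeasure yields a nonpathological ideal (your extra check that $P(\varphi_f)\ge 1$ is harmless but not needed for that observation). Your alternative route, verifying directly that $\varphi_f=\sup_{\mu\in\mathcal{M}}\mu_f$ via parts (2)--(3) of the lemma, is a clean shortcut the paper doesn't spell out but is clearly sound.
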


\begin{proof}
It follows immediately from Lemma \ref{RKdegree} and  the fact that an ideal is non pathological if it has a submeasure with finite pathological degree. 
\end{proof}

Recently, Figueroa and Hrusak \cite{FigueroaHrusak2023} proved that nonpathological $F_{\sigma}$ ideals (and every restriction of them) are Katetov-below $\mathcal{Z}$, the ideal of asymptotic density zero sets. This implies that $\mathcal{S}$ is pathological. However we do not know if $\mathcal{S}$ plays a critical role among pathological $F_{\sigma}$ ideals in Katetov or Rudin-Keisler orders.

\begin{question}
How is $\mathcal{S}$ related (Kat\v{e}tov, Rudin-Keisler) with pathological $F_{\sigma}$ ideals?
\end{question}

As we show next, Mazur's ideal $\mathcal{M}$ defined in subsection \ref{examplemazur} confirms the critical role played by  Solecki's ideal $\mathcal{S}$ in the Kat\v{e}tov order for the collection of pathological ideals.

\begin{teo}
\label{ReductionToS}
There is an $\mathcal{M}$-positive set $X$ such that
$\mathcal{S}\leq_{RK} \mathcal{M}\upharpoonright X$.
\end{teo}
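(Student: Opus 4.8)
The plan is to produce $X$ by selecting, inside each interval $K_n$ (identified with the set of functions $n\to 2n$), a small combinatorial core whose induced trace of $\mathcal{M}$ looks like $\mathcal{S}$. Recall that the generators of $\mathcal{M}$ are the sets $\hat i=\{f\in K_n: i\notin\operatorname{range}(f)\}$, and $\psi_n(A)$ counts how many of the $\hat i$'s are needed to cover $A$; meanwhile the generators of $\mathcal{S}$ are the sets $\tilde s=\{b\in\Omega_n:\langle s\rangle\subseteq b\}$ for $s\in 2^n$, and $\chi$ on $\Omega_n$ counts how many $\tilde s$'s cover. Both submeasures are thus ``covering'' submeasures by a distinguished family of size $2n$ vs $2^n$; the point is that the combinatorics of the Solecki family on $\Omega_n$ embeds into the combinatorics of $K_n$ once $n$ is large enough, because $\Omega_n$ is essentially $2^{2^n}$ (a clopen set of measure $1/2$ that is a union of basic clopen sets of level $n$ is determined by a subset of $2^n$ of size $2^{n-1}$) and each point $a\in\Omega_n$ is in exactly $2^{n-1}$ of the $2^n$ sets $\tilde s$, while each $f\in K_n$ avoids at least $n$ of the $2n$ values, i.e. lies in at least $n$ of the $\hat i$'s.

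First I would set up, for each sufficiently large $n$, an injection $\iota_n$ from a copy of the Solecki block at some level $\ell(n)$ into $K_n$, together with a matching between the Solecki generators $\tilde s$ and (unions of) the $\hat i$'s, so that $\psi_n(\iota_n(A))$ and $\chi_{\ell(n)}(A)$ are comparable up to a multiplicative constant. Concretely, one identifies a clopen $b\in\Omega_{\ell}$ of measure $1/2$ with the ``complement pattern'' $\{s\in 2^\ell:\langle s\rangle\not\subseteq b\}$, a set of size $2^{\ell-1}$, and one must realize such patterns as ranges-complements of functions $n\to 2n$; this forces $2n\geq 2^\ell$ and $n\geq 2^{\ell-1}$, so $\ell(n)\approx\log_2 n$, which still tends to infinity, so the Solecki block sizes $2^{\ell(n)-1}+1$ grow and $\fin$ on these blocks is not eventually trivial. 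Then $X=\bigcup_n\iota_n(\,\text{block at level }\ell(n))$, and $f:\mathbb{N}\to\mathbb{N}$ is the map sending each point of $\iota_n(\text{block})$ to the corresponding point of $\Omega_{\ell(n)}$ (after fixing a bijection of $\bigcup_n\Omega_{\ell(n)}$ with $\N$).

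The verification splits into: (a) $X\in\mathcal{M}^+$, which follows because $\psi(X\cap K_n)=\psi_n(\iota_n(\text{block}))\geq c\,\chi_{\ell(n)}(\Omega_{\ell(n)})=c\,(2^{\ell(n)-1}+1)\to\infty$, using $\sup_n\psi(X\cap K_n)=\psi(X)$; (b) $f$ is a Rudin–Keisler reduction of $\mathcal{M}\upharpoonright X$ onto $\mathcal{S}$, i.e. for $A\subseteq\Omega$, $A\in\mathcal{S}$ iff $f^{-1}(A)\in\mathcal{M}\upharpoonright X$, which amounts to the two-sided estimate $\chi(A\cap\Omega_\ell)\leq \psi_n(f^{-1}(A)\cap K_n)\leq C\cdot\chi(A\cap\Omega_\ell)$ when $\ell=\ell(n)$, uniformly in $n$. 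The main obstacle is exactly part (b), and more precisely the upper bound: one needs that if few $\hat i$'s cover $f^{-1}(A)\cap K_n$ inside $X$, then few $\tilde s$'s cover $A\cap\Omega_\ell$. This is where the matching between the $\hat i$-family and the $\tilde s$-family has to be chosen carefully — each $\hat i$ must be arranged to meet $X$ inside a controlled (constant) number of ``$\tilde s$-columns'', otherwise a single $\hat i$ could cover a large $\chi$-mass. Getting this bookkeeping right, i.e. designing $\iota_n$ so that the preimage structure of the $2n$ generators $\hat i$ refines the $2^{\ell}$ generators $\tilde s$ with bounded multiplicity, is the crux; once it is in place, items (1)--(5) of the RK machinery (Lemma \ref{RKdegree}) and the covering-number computations already done for $\mathcal{S}$ and for $\mathcal{M}$ make the rest routine.
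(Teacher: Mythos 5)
There is a genuine gap: you explicitly identify the crucial step --- designing $\iota_n$ so that the preimages of the $2n$ generators $\hat i$ refine the $2^\ell$ generators $\tilde s$ with bounded multiplicity --- and leave it unresolved. This is not mere bookkeeping; it is the entire content of the proof. In fact, pushing your own scaling observations one step further would have dissolved the worry about multiplicity. You note that matching a complement-pattern of size $2^{\ell-1}$ to a range-complement of a function $g:n\to 2n$ forces $n\geq 2^{\ell-1}$ and $2n\geq 2^\ell$; but the range-complement of $g$ has size $2n-|\operatorname{range}(g)|\geq n$, so to hit size exactly $2^{\ell-1}$ one is forced to take $n=2^{\ell-1}$ and $g$ \emph{one-to-one}. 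That is precisely the paper's choice: $X_n$ is the set of one-to-one functions in $K_n$, $X=\bigcup_n X_{2^n}$, and $f(r)=2^{\mathbb N}\setminus\bigcup_{j<2^n}\langle s^{n+1}_{r(j)}\rangle$, which maps $X_{2^n}$ \emph{onto} the Solecki block at level $n+1$.

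With this explicit map the multiplicity problem evaporates: for each $i<2^{n+1}$ one has the exact identity $\hat i\cap X_{2^n}=f^{-1}(\tilde{s_i})\cap X_{2^n}$, a perfect bijective matching between the two generating families rather than a bounded-overlap refinement. Then the two-sided estimate you were hoping for becomes an equality: a $\psi$-cover of $f^{-1}(A)\cap X_{2^n}$ by $N$ of the $\hat i$'s pushes forward (using surjectivity of $f$ on $X_{2^n}$) to a $\chi$-cover of $A\cap\Omega_{n+1}$ by $N$ of the $\tilde s$'s, and conversely; $\mathcal{M}$-positivity of $X$ follows since $X\cap K_{2^n}$ cannot be covered by $\leq 2^n$ of the $\hat i$'s. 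So the overall strategy and scaling heuristic in your proposal are sound, but without the one-to-one restriction and the explicit formula for $f$ the proof does not exist yet; the step you label as ``the crux'' is exactly what has to be supplied.
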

\begin{proof}
Recall that $K_n$ is the collection of all functions from $n$ to $2n$ and $\mathcal{S}_n=\{\hat{i}:\;i=0,\dots, m-1\}$. 
Let $X_n$ ($n\geq 0$) be the set of all one-to-one functions in $K_n$, and $X=\bigcup_{n}X_{2^n}$. 
Note that $X$ is $\mathcal{M}$-positive since $X\cap K_{2^n}$ cannot be covered by less or equal than $2^n$ members of $\mathcal{S}_{2^n}$, for all $n$.
For each $n$, fix an enumeration $\{s^n_0,s^n_1, \dots,s^n_{2^n-1}\}$ of $2^n$.
The Rudin-Keisler reduction $f$ from $X$ to $\Omega$ works as follows. If $r\in X_{2^n}$ then 
\[
f(r)=2^{\N}\setminus\bigcup_{j=0}^{2^n-1}\langle s^{n+1}_{r(j)}\rangle.
\]
Note that $f(r)\in\Omega_{n}$.

We show first, that if $A\in\mathcal{S}$ then $f^{-1}(A)\in\mathcal{M}\upharpoonright X$. Suppose  $A=\{a\in\Omega:x\in a\}$ for some $x\in 2^{\N}$.
Then for all $n$, $A\cap\Omega_{n}=\{a\in \Omega_n:\; \langle x\upharpoonright n\rangle \subseteq a\}$, but for some $j_n\in\{0,\dots,2^{n}-1\}$ it happens that $x\upharpoonright n=s^{n}_{j_n}$. Hence $f^{-1}(A)\cap X_n$ is the set $\widehat{j_n}=\{r\in X_n: j_n\notin range(r)\}$, what proves that $\psi(f^{-1}(A))=1$. 

On the other hand, suppose that $f^{-1}(A)\in\mathcal{M}$.
Then there exists $N$ such that for all $n$, there is $J_n\in[2^{n}]^{N}$ such that $r\cap J_n=\emptyset$ for all $r\in f^{-1}(A)\cap K_{2^n}$.
Thus for all $n$ there are 
$c^{n}_{j_1},c^{n}_{j_2},\dots,c^{n}_{j_N}\in 2^{n}$ such that $\langle c^{n}_{j_i}\rangle \subseteq A\cap \Omega_n$, for some $i=1,\dots, N$. Hence, $\chi(f^{-1}(A))\leq N$, what proves that $A\in\mathcal{S}$.
 \end{proof}

\section{Tallness and Borel selectors.}
\label{tallness}

In this section we address the question of the tallness of $\fin(\varphi)$. As a first remark, we notice that there is a simple characterization of when $\exh(\varphi)$ is tall. In fact,  $\exh(\varphi)$ is tall iff  $\suma(\varphi)$ is tall iff $\varphi(\{n\})\rightarrow 0$.  Indeed, if we let $C_{n}=\{x\in \N:\; 2^{-n}<\varphi(\{x\})\}$, then  $\exh(\varphi)$ is tall iff each $C_n$ is finite.   Notice also that   $\fin(\varphi)$ is tall, whenever $\exh(\varphi)$ is tall, as  $\exh(\varphi)\subseteq\fin(\varphi)$. But the converse is not true, that is, it  is possible that $\fin(\varphi)$ is tall while $\suma(\varphi)$ and $\exh(\varphi)$ are not (see Example \ref{ejemadecuada}).

On the other hand, Greb\'{\i}k and Hru\v{s}\'{a}k \cite{GrebikHrusak2020} showed that there are  no simple characterizations  of the class of tall  $F_{\sigma}$ ideals, in fact, they showed that the collection of  closed subsets of $\cantor$ which generates an $F_\sigma$ tall ideal is not Borel as a subset of the hyperspace $K(\cantor)$. 

\subsection{Property A and ideals generated by homogeneous sets of a coloring}
\label{propiedadA}

In this section we are going to examine two very different  conditions implying tallness of an $F_{\sigma}$ ideal. 
We  introduce a property weaker than requiring that  $\lim_n \varphi (\{n\})=0$ but which suffices to get that $\fin(\varphi)$ is tall.  

\begin{defi}
A lscsm $\varphi$ on $\N$ has  {\em property A}, if $\varphi(\N)=\infty$ and  $\varphi(\{n\in \N:\; \varphi(\{n\})>\varepsilon\})<\infty$ for all $\varepsilon>0$. 
\end{defi}

Property A can be seen as a condition about  the convergence of $(\varphi(\{n\}))_n$ to 0, but in a weak sense. In fact, let us recall that for a given  filter $\mathcal{F}$ on $\N$,  a sequence $(r_n)_n$  of real numbers {\em $\mathcal{F}$-converges} to $0$, if $\{n\in \N: |r_n|<\varepsilon\}\in \mathcal{F}$ for all $\varepsilon>0$.  Let $\mathcal{F}$ be the dual filter of $\fin(\varphi)$. Then $\varphi$ has property A iff $(\varphi(\{n\}))_n$ $\mathcal F$-converges to $0$.

On the other hand, property A has also a different interpretation. We recall that an ideal $\ideal$ over $\N$ is {\em weakly selective} \cite{HMTU2017},  if given a positive set $A\in \ideal^+$ and a partition $(A_n)_n$ of $A$ into sets in $\ideal$, there is $S\in\ideal^+$ such that $S\cap A_n$ has at most one point for each $n$. 
A submeasure $\varphi$ has property A, if $\fin(\varphi)$ fails to be weakly selective  in the following  partition of $\N$:  $A_{n+1}=\{x\in \N: 1/2^{n+1} \leq\varphi (\{x\}) < 1/2^n\}$ and $A_0= \{x\in \N: 1\leq \varphi(\{x\})\}$. In fact, any selector  for $\{A_n:\; n\in \N\}$ belongs to  $\exh(\varphi)$ and thus to $\fin(\varphi)$.

\begin{prop}
\label{propAtall}
$\fin(\varphi)$ is tall  for all lscsm $\varphi$ with property A. 
\end{prop}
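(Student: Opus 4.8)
The plan is to take an arbitrary infinite set $X\su \N$ and produce an infinite subset of $X$ that belongs to $\fin(\varphi)$. First I would split $X$ according to the partition used in the reformulation of property A: set $A_0=\{x:\varphi\{x\}\geq 1\}$ and $A_{n+1}=\{x: 1/2^{n+1}\leq \varphi\{x\}<1/2^n\}$, so that $\N=\bigcup_n A_n$ and, by property A applied with $\varepsilon=1/2^n$, each finite union $A_0\cup\cdots\cup A_n=\{x:\varphi\{x\}>1/2^{n+1}\}$ has finite $\varphi$-value; in particular each $A_n$ lies in $\fin(\varphi)$, indeed in $\suma(\varphi)$. Now there are two cases.

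\emph{Case 1: $X\cap A_n$ is infinite for some $n$.} Then $B=X\cap A_n$ is an infinite subset of $X$, and since $A_n\in \suma(\varphi)\su\fin(\varphi)$, we are done.

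\emph{Case 2: $X\cap A_n$ is finite for every $n$.} Then $X$ meets infinitely many of the $A_n$'s, so I can pick a strictly increasing sequence $n_0<n_1<n_2<\cdots$ and points $x_k\in X\cap A_{n_k}$, and set $B=\{x_k:k\in\N\}$, an infinite subset of $X$. The key point is to estimate $\varphi(B)$: since $x_k\in A_{n_k}$ we have $\varphi\{x_k\}<1/2^{n_k}\leq 1/2^k$, so by subadditivity and lower semicontinuity, $\varphi(B)\leq \sum_{k}\varphi\{x_k\}\leq \sum_k 2^{-k}=2<\infty$. Hence $B\in\suma(\varphi)\su\fin(\varphi)$, and again we are done.

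There is no real obstacle here; the only thing to be slightly careful about is that in Case 2 one genuinely wants the selected points to be spread out so that the singleton submeasures are summable — picking one point from each of infinitely many distinct classes $A_{n_k}$ with $n_k\geq k$ achieves exactly that, and lower semicontinuity of $\varphi$ legitimizes passing from the finite partial sums to $\varphi(B)$. Note also that property A is used only mildly: it guarantees $A_n\in\fin(\varphi)$ in Case 1 (in fact each $A_n\in\suma(\varphi)$, which is all we need), while Case 2 needs nothing about $\varphi$ beyond $\varphi\{x\}<\infty$ for singletons. In fact the argument shows the stronger statement that the infinite subset of $X$ can always be chosen in $\suma(\varphi)$.
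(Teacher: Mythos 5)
Your argument follows essentially the same approach as the paper: split the infinite set according to the scale of $\varphi\{x\}$ and either find infinitely many points at one scale (handled by property A) or find points at arbitrarily small scales (handled by summability). Formulating it via the explicit partition $(A_n)_n$ is a slight reorganization of the paper's "either $A\su\{n:\varphi\{n\}>\varepsilon\}$ for some $\varepsilon$, or pick $n_k$ with $\varphi\{n_k\}\leq 2^{-k}$," and it has the minor virtue that the points $x_k$ you pick in Case~2 are automatically distinct.

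However, two of your side claims are wrong, even though the main conclusion survives. First, $A_n\in\suma(\varphi)$ is false in general: for $n\geq 1$ every $x\in A_n$ has $\varphi\{x\}\geq 2^{-n}$, so if $A_n$ is infinite then $\sum_{x\in A_n}\varphi\{x\}=\infty$. What property A actually gives you in Case~1 is only $A_n\in\fin(\varphi)$ (since $A_n\su\{x:\varphi\{x\}>2^{-n-1}\}$, which has finite $\varphi$-value); that is enough to conclude, but the route through $\suma(\varphi)$ is broken. Second, and for the same reason, your closing remark that "the infinite subset of $X$ can always be chosen in $\suma(\varphi)$" is false: Example~\ref{ejemadecuada}(a) of the paper exhibits a $\varphi$ with property~A (hence $\fin(\varphi)$ tall) for which $\suma(\varphi)$ is not tall — take $X=B_n$ there, every infinite subset of which has divergent sum of singleton values yet finite $\varphi$. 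Finally, the inequality $\varphi\{x_k\}<1/2^{n_k}$ is off by one given your indexing ($x_k\in A_{n_k}$ only gives $\varphi\{x_k\}<1/2^{n_k-1}$, and says nothing if $n_k=0$); this is harmless once you discard the finite set $X\cap A_0$ and note $n_k\geq k+1$, but it should be fixed.
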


\proof Let $A\su \N$ be an infinite set.   If there is $\varepsilon>0$ such that $A\su  \{n\in \N:\; \varphi(\{n\})>\varepsilon\}$, then $A\in \fin(\varphi)$ as $\varphi$ has property A. Otherwise, pick  $n_k\in A$ such that $\varphi(\{n_k\})\leq 2^{-k}$ for all $k\in \N$. Let $B=\{n_k:\; k\in \N\}$. Then $\varphi(B)\leq \sum_k \varphi (\{n_k\})<\infty$.  
\qed

Note that any integer valued lscsm fails to  have the property A.  Thus,  every $F_\sigma$ tall ideal $\ideal$ is induced by a lscsm $\psi$ without the property A (for example,  the one given by the proof of Mazur's theorem applied to $\ideal$). 

Now we present a natural construction of submeasures with property A.   In particular, it provides  a lscsm $\varphi$ such that $\fin(\varphi)$ is tall but $ \varphi (\{n\})\not\to 0$.

\begin{ex}
\label{ejemadecuada} 
{\em 
Let $\{B_n:n\in\N\}$ be a partition of $\N$ into infinite sets and  $ \{B_n^k:k\in\N\}$ be  a partition of $B_n$ satisfying:
\begin{itemize}
\item $B_n^0$  consists of the first $2^n(n+1)$ elements of $B_n$.

\item $\min B_n^{k+1}=\min\{x\in B_n:x>\max B_n^k\}$.

\item $\vert B_n^{k+1}\vert \geq \vert B_n^k\vert$.

\end{itemize}
\medskip

\noindent Let  $\nu_n^k$ be the measure on  $B_n^k$ given by $\nu_n^k (\{x\})=\frac{n+1}{\vert B_n^k\vert}$ for all $x\in B_n^k$. Let 
$$
\varphi_n=\sup_k \nu_n^k$$ 
and 
$$
\varphi=\sum_n \varphi_n.
$$ 
Then $\varphi$ is a nonpathological lscsm. We list some useful facts about this construction.

\medskip
	
\begin{enumerate}
\item $\varphi (B_n)=\varphi (B_n^k)=n+1$ for all $n$ and $k$. 		

\item Let $(n_i)_i$ and $(k_i)_i$ two sequences in $\N$. Suppose $(n_i)_i$ is increasing. Then $\varphi(\bigcup_i B_{n_i}^{k_i})=\infty$.		

\item  $\varphi$ has property A.  Let $\varepsilon>0$ and  $M_{\varepsilon}=\{x\in\N: \varphi(\{x\})\geq\varepsilon\}$.
Notice  that  $\nu_n^k (\{x\})\leq \frac{1}{2^n}$ for all $x\in B_n^k$.
Let $N$ be such that $2^{-N}<\varepsilon$, then $M_{\varepsilon}$ is disjoint from $B_{m}$ for all $m>N$ and  thus $M_{\varepsilon}\subseteq B_0\cup \cdots\cup B_N$ belongs to $\fin(\varphi)$.

\item $\fin (\varphi)$ is not a P-ideal.  In fact, the $P$-property fails at $(B_n)_n$. Indeed, let us suppose that  $B_n\subseteq^*X$ for all $n$. Then for each $n$ there is $k_n$ such that $B_n^{k_n}\subseteq X$, and thus $\bigcup_nB_n^{k_n}\subseteq X$. By (2), $X\notin\fin(\varphi)$.

\item Every selector of the $B_n$'s belongs to $\suma(\varphi)$.

\item $B_n\in \fin(\varphi)\setminus \exh(\varphi)$ for all $n$. 

\end{enumerate}

\bigskip

Now we present two particular examples of the previous general construction. 

\medskip

\begin{itemize} 
\item[(a)] Suppose  $\vert B_n^{k+1}\vert=\vert B_n^k\vert$ for all $n$ and $k$. Notice that $\nu_n^k (\{x\})=\frac{n+1}{\vert B_n^k\vert}=1/2^n$ for all $x\in B_n$. Thus $\varphi(\{x\})=1/2^n$ for all $x\in B_n$ and  $\lim_n \varphi(\{n\})$  does not exist. Then $\suma(\varphi)$ and $\exh(\varphi)$ are not tall,  but $\fin(\varphi)$ is tall since it has the property A. 

\medskip

\item[(b)] Suppose   $\vert B_n^{k+1}\vert=\vert B_n^k\vert+n+1=(n+1)(2^n+k)$.
Then $\nu_n^k (\{x\})=\frac{n+1}{\vert B_n^k\vert}=\frac{1}{2^n+k}$ for all $x\in B_n^k$. 
We show that  $\varphi(\{m\})\to 0$, when $m\to \infty$. 
Given $\varepsilon>0$, we have seen that  $M_{\varepsilon}=\{x\in\N: \varphi(\{x\})\geq\varepsilon\}$ is disjoint from $B_{m}$ for all $m>N$ when  $2^{-N}<\varepsilon$, and it is also disjoint from $B_n^k$ when $k^{-1}<\varepsilon$. Hence $M_{\varepsilon}$ is finite. 

We claim that $\suma(\varphi)\neq \exh(\varphi)\neq \fin(\varphi)$.  By (4) and (6), it is sufficient to prove that there is $X\in\exh(\varphi)\setminus \suma(\varphi)$. 
For a fixed $n$, let $X=\{x_k:k\in\N\}$ be such that $x_k\in B_n^k$.  Since $\varphi(\{x_k\})=\frac{1}{2^n+k}$ for all $k$,  $X\not\in \suma(\varphi)$. On the other hand, $\varphi(\{x_k:\, k\geq m\})=\frac{1}{2^n+m}\to 0$ when $m\to \infty$.

\end{itemize}

\medskip

In  both  examples (a) and (b), $\varphi$ is a nonpathological submeasure since it can be expressed as $\sup\{ \mu_s:s\in\mathbb{N}^{<\omega}\}$, where $\mu_s$ is defined by
$$\mu_s(D)= \sum_{j<\vert s \vert} \nu_{j}^{s(j)}(D\cap B_{j}^{s(j)})$$
for  $s\in\mathbb{N}^{<\omega}$ and $D\subseteq \mathbb{N}$. 
}
\end{ex}

Now we present some examples of tall ideals which do not contain $\fin(\varphi)$ for  any $\varphi$ with property A.  Our examples are motivated by Ramsey's theorem. We refer the reader to  section \ref{preliminares} where the notation is explained and to \cite{GrebikUzca2018} for more information about this type of ideals

 We say that a coloring $c:[X]^2\to 2$ \emph{favors  color} $i$, if there  are no infinite $(1-i)$-homogeneous sets and in every set belonging to  $Hom(c)^*$   there are $(1-i)$-homogeneous sets of any finite cardinality. 

\begin{prop}
\label{favor0}
Let $c:[X]^2\to 2$ be a coloring that favors a color. Then, $Hom(c)$ does not contain $\fin(\varphi)$  for any  lscsm $\varphi$ with property A.
\end{prop}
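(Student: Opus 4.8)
\emph{Proof plan.} I identify $X$ with $\N$ and argue by contradiction: suppose $c$ favors color $i$ and $\fin(\varphi)\subseteq Hom(c)$ for some lscsm $\varphi$ with property A. The goal is to produce a single set $B$ with $\varphi(B)<\infty$ and $B\notin Hom(c)$. First I would record two elementary reductions. Since every singleton is (vacuously) $c$-homogeneous, $Hom(c)\supseteq\fin$; and since $c$ favors $i$, there are no infinite $(1-i)$-homogeneous sets, so in the definition of $Hom(c)$ the $(1-i)$-homogeneous generators are finite and can be absorbed into the "finite error" term. Hence $A\in Hom(c)$ iff $A\setminus(G_1\cup\dots\cup G_m)$ is finite for some \emph{$i$-homogeneous} $G_1,\dots,G_m$. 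I would also rephrase property A as: for each $\varepsilon>0$ the set $D_\varepsilon=\{n:\varphi(\{n\})>\varepsilon\}$ belongs to $\fin(\varphi)$, while every $x\notin D_\varepsilon$ has $\varphi(\{x\})\le\varepsilon$.

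Next I would build $B=\bigcup_{k\ge1}F_k$ by recursion, where each $F_k$ is a $(1-i)$-homogeneous set with $|F_k|=k$, the $F_k$ are pairwise disjoint, and $\varphi(F_k)\le 2^{-k}$. At stage $k$, given $F_1,\dots,F_{k-1}$, set $\varepsilon_k=2^{-2k}$ and consider $P_k=\N\setminus\bigl(D_{\varepsilon_k}\cup F_1\cup\dots\cup F_{k-1}\bigr)$. Its complement is a union of a set in $\fin(\varphi)\subseteq Hom(c)$ and a finite set, hence lies in $Hom(c)$, so $P_k\in Hom(c)^*$. Because $c$ favors color $i$, $P_k$ contains a $(1-i)$-homogeneous set of any prescribed finite cardinality; choose $F_k\subseteq P_k$ with $|F_k|=k$. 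Being inside $P_k$, $F_k$ is disjoint from the earlier blocks and disjoint from $D_{\varepsilon_k}$, so by finite subadditivity $\varphi(F_k)\le k\varepsilon_k\le 2^{-k}$. Countable subadditivity of $\varphi$ (from finite subadditivity plus lower semicontinuity) then gives $\varphi(B)\le\sum_k 2^{-k}=1$, so $B\in\fin(\varphi)$.

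Finally I would verify $B\notin Hom(c)$. If $B\in Hom(c)$, the reduction gives $i$-homogeneous $G_1,\dots,G_m$ with $E:=B\setminus(G_1\cup\dots\cup G_m)$ finite. Since the $F_k$ are pairwise disjoint subsets of $B$, at most $|E|$ of them meet $E$, so I can pick $k>\max(m,1)$ with $F_k\cap E=\emptyset$; then $F_k\subseteq G_1\cup\dots\cup G_m$, and by pigeonhole some $G_l$ contains at least $\lceil k/m\rceil\ge 2$ elements of $F_k$. Any two of these elements receive color $i$ (from $G_l$) and also color $1-i$ (from $F_k$, which has size $\ge 2$), a contradiction. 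Thus $\fin(\varphi)\not\subseteq Hom(c)$, proving the proposition. The two places where real content enters are the reductions in the first paragraph — which genuinely use both that $c$ favors $i$ and that $Hom(c)\supseteq\fin$ — and, more essentially, the recursive step, where the "favors" hypothesis must be applied to a $Hom(c)^*$-set to extract arbitrarily large $(1-i)$-homogeneous blocks while property A simultaneously keeps $\varphi$ of these blocks summable; balancing these two requirements is the crux.
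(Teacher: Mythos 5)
Your proof is correct and follows the same strategy as the paper: use property A to produce $Hom(c)^*$-sets, extract increasingly large $(1-i)$-homogeneous blocks of small $\varphi$-value inside them, and derive a pigeonhole contradiction from a putative cover by finitely many $i$-homogeneous sets. You are somewhat more explicit than the paper's exposition in two places that the paper leaves implicit --- making the blocks pairwise disjoint (so that a finite error set can meet only finitely many of them) and noting that the $(1-i)$-homogeneous generators of $Hom(c)$ are finite and hence absorb into the finite-error term --- but the core argument is identical.
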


\begin{proof}
Suppose $c$ favors color 0. Let $\varphi$ be an arbitrary  lscsm on $X$ with property A and suppose that $\fin(\varphi)\su Hom(c)$.  We will construct a set $A$ in $Hom(c)^+$  with $\varphi(A)<\infty$, which is a contradiction.  Let $B_{n}=\{x\in \N:\; 2^{-n}<\varphi(\{x\})\}$ for each $n\in \N$. As $\varphi$ has property $A$,  $B_n\in \fin(\varphi)$ and  $\N\setminus B_n\in Hom(c)^*$. By hypothesis, for each $n\in \N$, there is a 1-homogeneous finite set $A_n$  with $n$ elements and  such that $A_n\cap  B_n=\emptyset$. 
Since $\varphi(A_n)\leq \frac{n}{2^ n}$,  $A=\bigcup_n A_n\in \fin(\varphi)$.  As $\fin(\varphi)\su Hom(c)$, there is a  finite union of 0-homogeneous sets $C=C_1\cup\cdots\cup C_k$  such that $A\subseteq^* C$. As $A$ is infinite,  there is $l>k$ such that  $A_{l} \su C$. Since $A_l$ has $l$ elements,  there are $i\leq k$ and $x\neq y\in A_{l} \cap C_i$ which is imposible as $A_{l}$ is 1-homogeneous and $C_i$ is 0-homogeneous. A contradiction. 
\end{proof}

We present two examples of colorings satisfying  the hypothesis of the previous proposition. 

\begin{ex}
\label{edfin}
Let $(P_n)_n$ be a  partition of  $\N$ such that $|P_n|=n$.  
Let $c$ be the coloring given by $c\{x,y\}=0$ iff $x,y\in P_n$ for some $n$.
This coloring favors color 1. 
Notice that   $\mathcal{ED}_{fin}$ is the ideal generated by the $c$-homogeneous sets. 
\end{ex}

\begin{ex}
\label{SI}
Let $\Q$ be the rational numbers in $[0,1]$. Let $\{r_n:\; n\in\N\}$ be an enumeration of $\Q$. The {Sierpinski coloring}, $c:[\Q]^2\to 2$, is defined by $c\{r_n, r_m\}=0$ if $n<m$ iff  $r_n<r_m$. Denote by $\mathcal{SI}$ the ideal generated by the $c$-homogeneous sets.  Observe that the homogeneous sets are exactly the monotone subsequences of $\{r_n:\; n\in \N\}$. 
For each $n$, pick $X_n\su (n,n+1)$ an infinite homogeneous set of color 0 and let $X=\bigcup_n X_n$.  Then $X\in Hom(c)^+$.  It is easy to check that  $c\restriction X$ favors color $0$.   
\end{ex}

To see that Proposition \ref{favor0} is not  an equivalence, we notice that $\mathcal{ED}$ is the ideal generated by the homogeneous sets of a coloring not favoring any color, nevertheless,  by an argument similar to that used in the proof of Proposition \ref{favor0}, it can be shown that   $\mathcal{ED}$  does not contain $\fin(\varphi)$  for any  lscsm $\varphi$ with property A.

\subsection{Katetov reduction to $\mathcal{R}$}
Recall that $\mathcal{R}$ denotes the random ideal which is  generated by the collection of cliques and independent sets of  the random graph.
It is easy to check that if $\mathcal{R}\leq_K\ideal$, then $\ideal$ is tall. As we said before, for a while it was conjecture that any Borel tall ideal was Katetov above $\mathcal{R}$. This was shown to be false in \cite{GrebikHrusak2020}. In fact, they proved that there are $F_\sigma$ tall ideals which are not Katetov above the random ideal.  In this section we present a partial answer to the question of whether $\mathcal{R}\leq_K \ideal$ for a nonpathological tall ideal $\ideal$.

By the universal property of the random graph the following well known fact is straightforward.

\begin{prop}
\label{Rmin}
Let $\ideal$ be an ideal on $\N$. Then, $\mathcal{R}\leq_K \ideal$ if and only if there is $c:[\N]^2\to 2$ such that $\hom(c)\subseteq \ideal$. 
\end{prop}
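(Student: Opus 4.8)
The statement to prove is the following equivalence, a well-known fact about the random graph and Katětov order:

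\begin{prop}
Let $\ideal$ be an ideal on $\N$. Then, $\mathcal{R}\leq_K \ideal$ if and only if there is $c:[\N]^2\to 2$ such that $\hom(c)\subseteq \ideal$.
\end{prop}

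The plan is to unwind both sides through the definitions of $\leq_K$ and $\leq_{RK}$, using the universal property of the random graph to convert a colouring of $\N$ into a Rudin–Keisler reduction and vice versa. Recall that $\mathcal{R}$ lives on the vertex set $V$ of the random (Rado) graph $G_R$, and is generated by the cliques and independent sets of $G_R$; equivalently, writing $e:[V]^2\to 2$ for the edge-colouring of $G_R$, the $\mathcal{R}$-positive sets are exactly those containing no infinite $e$-homogeneous set, and a set $A\subseteq V$ lies in $\mathcal{R}$ iff $A$ is covered by finitely many $e$-homogeneous sets, i.e. $\mathcal{R}=Hom(e)$.

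For the forward direction, suppose $\mathcal{R}\leq_K\ideal$. By definition there is $\ideal'\leq_{RK}\ideal$ with $\mathcal{R}\subseteq\ideal'$, so there is $f:\N\to V$ with $\ideal'=f(\ideal')$... more precisely $\ideal'=\{A\subseteq V: f^{-1}(A)\in\ideal\}$, and $\mathcal{R}\subseteq\ideal'$. Pull back the edge colouring: define $c:[\N]^2\to 2$ by $c\{x,y\}=e\{f(x),f(y)\}$ when $f(x)\neq f(y)$, and $c\{x,y\}=0$ (say) otherwise. I claim $\hom(c)\subseteq\ideal$. Let $H\in\hom(c)$ be infinite (the finite case is trivial). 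If $f$ is injective on $H$ then $f[H]$ is $e$-homogeneous, hence $f[H]\in\mathcal{R}\subseteq\ideal'$, so $H\subseteq f^{-1}(f[H])\in\ideal$. If $f$ is not injective on $H$, then since $H$ is $c$-homogeneous and $c$ is $0$ on each fibre-pair inside $H$, $H$ must be $0$-homogeneous and... here one needs a small argument: $H$ meets some fibre $f^{-1}(v)$ in at least two points. A cleaner route is to first replace $H$ by a subset on which $f$ is either injective or constant — but $f$ constant on an infinite set forces that value's fibre to be infinite, and a single fibre $f^{-1}(v)$ satisfies $f^{-1}(\{v\})\in\ideal$ iff $\{v\}\in\ideal'$, which holds since $\{v\}\in\mathcal{R}\subseteq\ideal'$; so any infinite subset of a fibre is in $\ideal$ by heredity. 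In either case $H\in\ideal$.

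For the converse, suppose $c:[\N]^2\to 2$ with $\hom(c)\subseteq\ideal$. Consider the graph on $\N$ with edge relation $c\{x,y\}=1$. By the universal (in fact homogeneity/extension) property of the random graph, this graph embeds into $G_R$; more usefully, there is a map $f:V\to\N$ such that $e\{v,w\}=c\{f(v),f(w)\}$ whenever $f(v)\neq f(w)$ — indeed one can build such an $f$ by a back-and-forth/greedy recursion realizing over $G_R$ the colouring $c$, and one can even arrange $f$ surjective, or at least that every fibre is infinite and every $e$-homogeneous set maps onto a $c$-homogeneous set. Set $\ideal'=\{A\subseteq\N: f^{-1}(A)\in\ideal\}=f(\ideal)$, so $\ideal'\leq_{RK}\ideal$. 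It remains to check $\mathcal{R}\subseteq\ideal'$. Since $\mathcal{R}$ is generated by $e$-homogeneous sets, it suffices to show $f^{-1}(S)\in\ideal$ for every $e$-homogeneous $S\subseteq V$. But $f^{-1}(S)$ is $c$-homogeneous in $\N$ on the part where $f$ is injective; more carefully, $f[f^{-1}(S)]=S$ is $e$-homogeneous, and by the compatibility of $f$ with the colourings, any two points $x,y\in f^{-1}(S)$ with $f(x)\neq f(y)$ satisfy $c\{x,y\}=e\{f(x),f(y)\}=$ the constant colour of $S$; and points in the same fibre — we use that $f$ can be taken so that fibres are trivial (injective $f$), which the extension property of $G_R$ permits since $c$ is a genuine $2$-colouring of the complete graph on $\N$. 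Then $f^{-1}(S)\in\hom(c)\subseteq\ideal$, so $S\in\ideal'$, giving $\mathcal{R}\subseteq\ideal'$ and hence $\mathcal{R}\leq_K\ideal$.

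The main obstacle is the bookkeeping around fibres of $f$: in the $\leq_{RK}$ definition $f$ need not be injective, so one must either (a) arrange via the extension property of the Rado graph that the reduction witnessing $\mathcal{R}\leq_K\ideal$ is injective — which is legitimate because realizing a $2$-colouring of $K_\N$ over $G_R$ never forces collisions — or (b) handle fibres directly, noting $\{v\}\in\mathcal{R}$ so singleton-fibres are in $\ideal'$ and infinite fibres are harmless by heredity once one observes a fibre of $f$ pulled back is just a subset of $\N$ that is vacuously $c$-homogeneous-compatible. Everything else is a direct translation through the definitions of $\hom$, $Hom$, $\leq_{RK}$ and $\leq_K$ together with $\mathcal{R}=Hom(e)$.
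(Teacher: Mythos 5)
Your overall strategy --- pulling the Rado colouring $e$ back along a Rudin--Keisler reduction in one direction, and using universality of the random graph to realize $c$ inside $G_R$ in the other --- is the standard proof of this fact, which the paper states without argument. But the forward direction, as written, has a genuine gap. You correctly observe that if $f$ fails to be injective on a $c$-homogeneous $H$ then $H$ must be $0$-homogeneous; however, your ``cleaner route'' of passing to an infinite $H'\subseteq H$ on which $f$ is injective or constant only yields $H'\in\ideal$, and this says nothing about $H$ itself --- ideals are closed downward, not upward --- so the conclusion ``in either case $H\in\ideal$'' does not follow from what was shown. The correct patch is shorter than the detour: regardless of injectivity, $f[H]$ is $e$-homogeneous. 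Indeed, for distinct $v,w\in f[H]$ choose $x,y\in H$ with $f(x)=v$, $f(y)=w$; since $f(x)\neq f(y)$, $e\{v,w\}=c\{x,y\}$ is the colour of $H$. Hence $f[H]\in\mathcal{R}\subseteq\ideal'$, and $H\subseteq f^{-1}(f[H])\in\ideal$; this covers the injective case simultaneously.

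Secondly, in the converse you write $f:V\to\N$, but with the paper's convention $\ideal'=f(\ideal)=\{A:f^{-1}(A)\in\ideal\}$ the witnessing map must go from the underlying set of $\ideal$ (namely $\N$) to that of $\ideal'$ (namely $V$), so you need $f:\N\to V$; the rest of your argument, which applies $f^{-1}$ to subsets $S\subseteq V$, is consistent only with that orientation, so I take this to be a slip rather than an error of substance. With $f:\N\to V$ an injective graph embedding of $(\N,c)$ into $G_R$ (available since $G_R$ contains every countable graph as an induced subgraph), every $e$-homogeneous $S\subseteq V$ pulls back to $f^{-1}(S)\in\hom(c)\subseteq\ideal$, so $S\in\ideal'$; and since $\ideal'$ is a proper ideal containing all finite sets (injectivity of $f$ makes $f^{-1}$ of a finite set finite, and $f^{-1}(V)=\N\notin\ideal$), it follows that $\mathcal{R}=Hom(e)\subseteq\ideal'$, giving $\mathcal{R}\leq_K\ideal$.
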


The next fact illustrates the previous observation. 

\begin{prop}
$\mathcal{R}\leq_K \fin(\varphi)$ for all lscsm   $\varphi$ with property A.  
\end{prop}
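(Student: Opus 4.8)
The statement to prove is that $\mathcal{R}\leq_K\fin(\varphi)$ whenever $\varphi$ is a lscsm with property A. By the preceding proposition, it suffices to build a coloring $c\colon[\N]^2\to 2$ with $\hom(c)\subseteq\fin(\varphi)$; equivalently, every infinite $c$-homogeneous set has finite $\varphi$-measure. The idea is to force $0$-homogeneous sets to be selectors (at most one point per block) of the partition associated with property A, and to force $1$-homogeneous sets to live inside a single block; both kinds will then be $\varphi$-bounded. Concretely, set $A_0=\{x\in\N:\varphi\{x\}\geq 1\}$ and $A_{n+1}=\{x\in\N:2^{-(n+1)}\leq\varphi\{x\}<2^{-n}\}$, so that $(A_n)_n$ partitions $\N$ and, by property A, each initial union $A_0\cup\cdots\cup A_n$ has finite $\varphi$-measure (it is contained in $\{x:\varphi\{x\}\geq 2^{-n}\}$, which is in $\fin(\varphi)$).

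**Definition of the coloring.** Define $c\{x,y\}=1$ if $x$ and $y$ lie in the same block $A_n$, and $c\{x,y\}=0$ otherwise. First I would check the two homogeneity cases. If $H$ is infinite and $1$-homogeneous, then all of $H$ lies in a single $A_n$; but $A_n\subseteq A_0\cup\cdots\cup A_n\in\fin(\varphi)$, so $\varphi(H)<\infty$. If $H$ is infinite and $0$-homogeneous, then $H$ meets each block in at most one point, so $H$ is a selector of $(A_n)_n$; enumerate $H=\{h_k:k\in\N\}$ with $h_k\in A_{n_k}$ and the $n_k$ pairwise distinct. Since for each fixed $n$ there is at most one index $k$ with $n_k=n$, and $\varphi\{h_k\}<2^{-(n_k-1)}$ when $n_k\geq 1$ (with at most one exceptional point in $A_0$), we get $\varphi(H)\leq\varphi\{h\}_{A_0}+\sum_{k}\varphi\{h_k\}\leq \varphi(A_0\cap H)+\sum_{n\geq 1}2^{-(n-1)}<\infty$ by subadditivity and lower semicontinuity. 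Hence $\hom(c)\subseteq\fin(\varphi)$, and the cited proposition gives $\mathcal{R}\leq_K\fin(\varphi)$.

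**Where the care is needed.** There is essentially no deep obstacle here — the whole content is the bookkeeping in the $0$-homogeneous case, and the one subtlety is the block $A_0$, whose singletons need not have small $\varphi$-measure. This is harmless because a selector meets $A_0$ in at most one point, so that single point contributes a finite amount; alternatively one can absorb $A_0$ into $A_1$ or just note $A_0\in\fin(\varphi)$ directly from property A with $\varepsilon=1$. I would also remark that lower semicontinuity is what lets us pass from the finite partial sums $\varphi(H\cap\{0,\dots,m\})$, each bounded by the convergent series above, to $\varphi(H)$ itself. It is worth observing that this coloring $c$ does not favor any color (both colors admit homogeneous sets of all finite sizes inside any co-small set), which is consistent with Proposition~\ref{favor0}: that result excludes colorings favoring a color, not all colorings, and indeed here the stronger conclusion $\hom(c)\subseteq\fin(\varphi)$ holds.
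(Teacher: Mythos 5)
Your proof is correct and takes essentially the same approach as the paper's: both construct the coloring associated to the partition of $\N$ according to the size of $\varphi$ on singletons, then argue (exactly as in the proof of Proposition \ref{propAtall}) that $1$-homogeneous sets lie in a single block of finite $\varphi$-measure while $0$-homogeneous sets are selectors whose singleton measures are summable. One tangential caveat: your closing observation that the coloring ``does not favor any color'' is not needed and not always accurate (if every block is finite, condition~(a) in the definition of favoring color~$0$ holds, so whether it favors color~$0$ then turns on condition~(b)), and in any case Proposition~\ref{favor0} only rules out $Hom(c)\supseteq\fin(\varphi)$, not $\hom(c)\subseteq\fin(\varphi)$, so no conflict would arise even for a favoring coloring; this remark does not affect the correctness of your argument.
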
 

\proof 
Let
\[
B_{k+1}=\{n\in \N:\; 2^{-k-1}\;< \varphi(\{n\})\leq 2^{-k}\}
\]
for $k\in \N$, $B_0=\{n\in \N:\; \varphi(\{n\})>1\}$ and $B_{-1}=\{n\in \N: \varphi(\{n\})=0\}$. Let $c:[\N]^{2}\to \{0,1\}$ be the coloring  associated to the partition $\{B_k: \;k\in \N\cup\{-1\}\}$, that is to say, $c(\{n,m\})=1$ if, and only if,  there is $k$ such that $n,m\in B_k$. Then $A\in \hom(c)$ iff $A\subseteq B_{k}$ for some $k\in \N\cup\{-1\}$  or $|A\cap B_k|\leq 1$ for all $k$. Notice that $B_{-1}\in \fin(\varphi)$ and the argument used in the proof of Proposition \ref{propAtall}  shows that  $\hom(c)\subseteq \fin(\varphi)$.
\endproof

The next definition was motivated by the results presented in the last section about a representation of $F_{\sigma}$ ideals in  Banach spaces. 

\begin{defi}
\label{type-c0} A sequence of measures  $(\mu_k)_k$  on $\N$ is of type $c_0$ if  $\lim_k\mu_k(\{n\})=0$ for all $n$.
 \end{defi}

The main result of this section is the following:

\begin{teo}
\label{c0tall}
Let $\varphi=\sup_k\mu_k$ be a  nonpathological lscsm where  $(\mu_k)_k$ is a sequence of measures on $\N$ of type $c_0$.  The following are equivalent. 

\begin{itemize}
\item[(i)] $\mathcal{R}\leq_K \fin(\varphi)$.

\item[(ii)] $\fin(\varphi)$ is tall. 
\item[(iii)] $\lim_n\mu_k(\{n\})=0$ for all $k$ and $\sup_n\varphi(\{n\})<\infty$. 
\end{itemize}
\end{teo}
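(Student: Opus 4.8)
The plan is to prove the cycle (i) $\Rightarrow$ (ii) $\Rightarrow$ (iii) $\Rightarrow$ (i). The first implication is free: it is recorded in the excerpt that $\mathcal{R}\leq_K\ideal$ implies $\ideal$ is tall, so nothing needs to be done there. For (ii) $\Rightarrow$ (iii) I would argue by contrapositive, splitting into the two ways (iii) can fail. If $\sup_n\varphi(\{n\})=\infty$, then there is a subsequence $(n_j)_j$ with $\varphi(\{n_j\})\to\infty$; the set $A=\{n_j:j\in\N\}$ is infinite, but every infinite $B\su A$ has $\varphi(B)\geq\sup_j\varphi(\{n_j\})=\infty$, so $A$ witnesses that $\fin(\varphi)$ is not tall. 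If instead $\sup_n\varphi(\{n\})<\infty$ but there is a fixed $k$ with $\mu_k(\{n\})\not\to 0$, pick $\varepsilon>0$ and an infinite $A$ with $\mu_k(\{n\})>\varepsilon$ for all $n\in A$; then for any infinite $B\su A$ we have $\varphi(B)\geq\mu_k(B)=\sum_{n\in B}\mu_k(\{n\})=\infty$, again contradicting tallness. So tallness forces both halves of (iii).

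The substantive implication is (iii) $\Rightarrow$ (i). Assume $\sup_n\varphi(\{n\})=:C<\infty$ and $\lim_n\mu_k(\{n\})=0$ for every $k$ (and we already have the type-$c_0$ hypothesis $\lim_k\mu_k(\{n\})=0$ for every $n$). By the preceding proposition it suffices to construct a coloring $c:[\N]^2\to 2$ with $\hom(c)\su\fin(\varphi)$, i.e. such that every infinite $c$-homogeneous set $H$ satisfies $\varphi(H)<\infty$. The idea is to build $c$ so that a $1$-homogeneous set is ``spread out across the measures'' — it meets each $\mu_k$ in a way that keeps $\mu_k(H)$ bounded uniformly in $k$ — while a $0$-homogeneous set is forced to be an almost-selector for a suitable partition and hence summable. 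Concretely, I would enumerate $\N=\{a_0,a_1,a_2,\dots\}$ and, using both limit conditions, recursively choose the color $c(\{a_i,a_j\})$ for $i<j$ so that: for each $k$, the total $\mu_k$-mass that $a_j$ can contribute ``in color $1$ relative to earlier points assigned color $1$'' is summable in $j$; and the points receiving color $0$ relative to a given point are confined to a set of small $\varphi$-measure. The bound $\sup_n\varphi(\{n\})<\infty$ is what lets a single new point be absorbed, and the two $c_0$-type limits let the recursion always find a legal color. Then an infinite $1$-homogeneous $H=\{a_{j_0}<a_{j_1}<\cdots\}$ has $\mu_k(H)\leq$ a convergent series bound independent of $k$, so $\varphi(H)=\sup_k\mu_k(H)<\infty$; and an infinite $0$-homogeneous $H$ is contained in a $\varphi$-bounded set by construction, so again $\varphi(H)<\infty$. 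Either way $H\in\fin(\varphi)$, giving $\hom(c)\su\fin(\varphi)$ and hence $\mathcal{R}\leq_K\fin(\varphi)$.

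The main obstacle is making the recursive construction of $c$ precise: one must interleave the two limit hypotheses (in $n$ for fixed $k$, and in $k$ for fixed $n$) against a single sequential enumeration of $\N$ and guarantee that at every step both ``color-$0$ budget'' and ``color-$1$ budget'' can be honored. A clean way to handle this is a bookkeeping argument reminiscent of the proof of Proposition~\ref{favor0}: fix a summable sequence $(\varepsilon_j)_j$ with $\sum_j\varepsilon_j<\infty$, and when deciding $c(\{a_i,a_j\})$ for all $i<j$ at stage $j$, use that for all but finitely many $k$ (by type $c_0$) $\mu_k(\{a_j\})<\varepsilon_j$, and for the remaining finitely many $k$ use $\lim_n\mu_k(\{n\})=0$ to see that $a_j$ is $\mu_k$-small once $j$ is large; this lets us declare color $1$ between $a_j$ and all earlier $a_i$ except those in a finite ``exceptional'' set determined by the finitely many bad $k$'s, and route those exceptional pairs to color $0$, where they form a set of uniformly bounded $\varphi$-measure (bounded in terms of $C$ and the number of bad indices, which can be controlled). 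I would also double-check the boundary step, the assumption $\varphi(\{n\})\neq 0$ made in the earlier proposition, adding it harmlessly (points with $\varphi(\{n\})=0$ lie in $\fin(\varphi)$ and can be absorbed into any homogeneous piece). With the construction in hand, the verification that homogeneous sets are $\varphi$-finite is a short estimate, completing (iii) $\Rightarrow$ (i) and closing the equivalence.
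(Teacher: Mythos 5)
Your handling of (i) $\Rightarrow$ (ii) and (ii) $\Rightarrow$ (iii) is fine and matches the paper: the first is immediate, and your contrapositive split for the second is exactly the argument of Lemma~\ref{tall-weaklynull} (a set on which some $\mu_k$ stays bounded away from $0$ has all infinite subsets of infinite $\varphi$-measure; a set on which $\varphi(\{n\})\to\infty$ is already bad).

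The real content is (iii) $\Rightarrow$ (i), and here your proposal has a genuine gap. You correctly identify that it suffices to build a coloring $c:[\N]^2\to 2$ with $\hom(c)\subseteq\fin(\varphi)$, but the construction you sketch is never carried out, and the plan as described runs into a concrete obstruction. You want to declare $c(\{a_i,a_j\})=1$ for ``most'' earlier $a_i$, routing only finitely many pairs to color $0$, and then claim a $0$-homogeneous set lies in a $\varphi$-bounded set ``bounded in terms of $C$ and the number of bad indices, which can be controlled.'' But there is no control on the number of bad indices: for a fixed $n$, the set $\{k:\mu_k(\{n\})\geq\varepsilon\}$ is finite by the type-$c_0$ hypothesis, but its size can grow without bound as $n$ varies, and the bound $\sup_n\varphi(\{n\})<\infty$ gives no help (it only bounds the values $\mu_k(\{n\})$, not how many $k$'s realize a given value). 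So the ``$0$-budget'' is not bounded, and the claim that $0$-homogeneous sets are contained in a $\varphi$-bounded set does not follow from what you wrote. Likewise the claim that $1$-homogeneous $H$ has $\mu_k(H)$ bounded \emph{uniformly in $k$} is asserted, not established: the recursion would have to simultaneously satisfy infinitely many constraints (one for each $k$) at each stage, and no mechanism is given for doing so.

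The paper avoids these problems with two structural devices you do not have. First, a preliminary reduction (Lemma~\ref{finitesupport}) replaces $\mu_k(\{n\})$ by its truncation to multiples of $2^{-n}$; after this, for each $n$ the set $\{k:\mu_k(\{n\})\neq 0\}$ is \emph{finite} (this is where type $c_0$ and a per-$n$ threshold are combined), which is what makes the bookkeeping tractable. Second, rather than trying to bound both color classes, the paper defines an explicit coloring via the dyadic-scale partitions $A^n_i=\{k:2^{-i-1}<\mu_k(\{n\})\leq 2^{-i}\}$ and proves (a) every $1$-homogeneous set $H$ has $\varphi(H)\leq 2$, because along $H$ the scale index $j_i$ is strictly increasing, giving a geometric series bound; and (b) there are \emph{no} infinite $0$-homogeneous sets, because for each $n$ and all sufficiently large $l$ one has $c\{n,l\}=1$. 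Your dual-bound strategy is strictly harder than the paper's and is the source of the uncontrolled ``$0$-budget.'' As it stands, your (iii) $\Rightarrow$ (i) is a plan with an acknowledged open step rather than a proof, and the step you flag as ``the main obstacle'' is in fact the entire theorem.
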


For the proof we need some auxiliary results. 
Let $(\mu_k)_k$ be a sequence of measures on $\N$ such that   $\mu_k(\{n\})\leq 1$ for all $n$ and $k$.  For $n,i\in \N$, let
\begin{equation}
\label{partition}
A^n_i=\left\{k\in \N: \; 2^{-i-1}< \mu_k(\{n\}) \leq 2^{-i}\right\}\;\;\mbox{and}\;\;   A^n_\infty=\{k\in\N:\; \mu_k(\{n\})=0\}.
\end{equation}
Each  $\mathcal{P}_n=\{A^n_i:\; i\in\N\cup\{\infty\}\}$ is a partition of $\N$ (allowing that some pieces of a partition are empty).  Let $L_n=\{i\in \N:\; A^n_i\neq \emptyset\}$. We  now show  that, without loss of generality, we can assume  that each $L_n$ is finite. 

\begin{lema}
\label{finitesupport}

Let $\varphi=\sup_k\mu_k$ be a nonpathological lscsm where $(\mu_k)_k$ is a sequence of measures on $\N$ such that   $\mu_k(\{n\})\leq 1$ for all $n$ and $k$.    There is another sequence of measures $(\lambda_k)_k$ on $\N$ such that 
\begin{itemize}
\item[(i)] $\lambda_k\leq \mu_k$ for all $k$.

\item[(ii)]  $\fin(\varphi)=\fin(\psi)$ where $\psi=\sup_k \lambda_k$.

\item[(iii)] If $(\mathcal{P}_n)_n$ is the sequence of partitions associated to $(\lambda_k)_k$, then each $L_n$ is finite. 
\end{itemize}
\end{lema}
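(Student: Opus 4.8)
The plan is to obtain $(\lambda_k)_k$ from $(\mu_k)_k$ by a mild, level-dependent truncation of the singleton masses. Fix any summable sequence of positive reals, say $\varepsilon_n=2^{-n}$, and for each $k$ and $n$ put
\[
\lambda_k(\{n\}) =
\begin{cases}
\mu_k(\{n\}) & \text{if } \mu_k(\{n\}) > \varepsilon_n,\\
0 & \text{otherwise,}
\end{cases}
\]
extending each $\lambda_k$ to a measure on $\N$ by countable additivity. Since $0\le\lambda_k(\{n\})\le\mu_k(\{n\})\le 1$, every $\lambda_k$ is a genuine measure with $\lambda_k\le\mu_k$, which gives (i); moreover $\psi:=\sup_k\lambda_k$ is a lscsm with $\psi(\{n\})\le 1$ and $\psi\le\varphi$, so $\fin(\varphi)\subseteq\fin(\psi)$.

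For (iii) the computation is immediate. Let $(\mathcal{P}_n)_n$ be the sequence of partitions attached to $(\lambda_k)_k$ by the recipe (\ref{partition}), and suppose $k\in A^n_i$ with $i\in\N$. Then $\lambda_k(\{n\})>2^{-i-1}>0$, hence $\lambda_k(\{n\})=\mu_k(\{n\})>\varepsilon_n=2^{-n}$; combined with $\lambda_k(\{n\})\le 2^{-i}$ this forces $i<n$. Therefore $L_n\subseteq\{0,1,\dots,n-1\}$ is finite.

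The crux is (ii). The inclusion $\fin(\varphi)\subseteq\fin(\psi)$ is free, since $\psi\le\varphi$. For the reverse inclusion the point is that the truncation discards only a uniformly bounded total mass: for all $k$ and $n$ one has $\lambda_k(\{n\})\ge\mu_k(\{n\})-\varepsilon_n$ (trivially in both cases of the definition), so for every finite $F\subseteq\N$
\[
\lambda_k(F) \ \ge\ \mu_k(F)-\sum_{n}\varepsilon_n \ =\ \mu_k(F)-1 .
\]
Letting $F$ exhaust $A$ and using continuity from below of the measures $\lambda_k$ and $\mu_k$, we get $\lambda_k(A)\ge\mu_k(A)-1$ for every $A\subseteq\N$ and every $k$, hence $\psi(A)=\sup_k\lambda_k(A)\ge\varphi(A)-1$. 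Thus $\varphi(A)=\infty$ implies $\psi(A)=\infty$, i.e. $\fin(\psi)\subseteq\fin(\varphi)$, and (ii) follows.

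I do not expect any serious obstacle. The only point that genuinely requires care is the choice of the truncation thresholds $\varepsilon_n$: they must be summable, since a sequence tending to $0$ but not summable could strictly enlarge $\fin(\varphi)$ and destroy (ii). The remaining verifications --- that each $\lambda_k$ is a measure, that $\psi$ is a lscsm with $\psi(\N)=\infty$, and the finiteness of the sets $L_n$ --- are routine.
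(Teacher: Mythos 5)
Your proof is correct and takes essentially the same approach as the paper: perturb each $\mu_k$ by modifying its singleton masses with an error summable in $n$ (bounded by $2^{-n}$), which simultaneously forces $L_n\subseteq\{0,\dots,n-1\}$ and keeps $\fin(\psi)=\fin(\varphi)$ via $\lambda_k(F)\geq\mu_k(F)-1$. The only difference is the implementation detail: the paper rounds $\mu_k(\{n\})$ down to the nearest multiple of $2^{-n}$, while you truncate values $\leq 2^{-n}$ to zero; both yield the same summable-error estimate and the same conclusion.
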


\proof
Let  $\lambda_k(\{n\})=i/2^{n}$, if $i/2^{n} < \mu_k(\{n\})\leq (i+1)/2^{n}$ for some $0< i< 2^n$, and $\lambda_k(\{n\})=0$ otherwise. 
Let $\psi=\sup_k\lambda_k$.  Clearly  $\lambda_k\leq \mu_k$. Thus $\fin(\varphi)\subseteq \fin(\psi)$. 
Notice that  $|\mu_k(\{n\})-\lambda_k(\{n\})|\leq 1/2^n$ for all $n$ and $k$. Let $F\subseteq \N$ be a finite set. Then 
\[
\mu_k(F)= (\mu_k(F)-\lambda_k(F))+\lambda_k(F)\leq \sum_{n\in F}1/2^n +\psi(F).
\]
Thus $\fin(\psi)\subseteq \fin(\varphi)$. Let $\{A^n_i:\; i\in\N\cup\{\infty\}\}$ be the sequence of partitions  associate to $(\lambda_k)_k$.
It is immediate that  $A^n_i\neq \emptyset$,  only if  $i< n$.
\endproof

\begin{lema}
\label{tall-weaklynull}

Let $\varphi=\sup_k\mu_k$ be a nonpathological lscsm where each $\mu_k$ is a measure on $\N$. If $\fin(\varphi)$ is tall, then $\lim_n\mu_k(\{n\})=0$ for all $k$  and $\sup_n\varphi(\{n\})<\infty$.
\end{lema}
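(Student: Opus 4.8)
The plan is to prove the contrapositive of each half separately, using tallness to manufacture a positive set all of whose infinite subsets carry infinite $\varphi$-mass. First suppose $\sup_n\varphi(\{n\})=\infty$. Then there is a sequence $n_1<n_2<\cdots$ with $\varphi(\{n_j\})\to\infty$, so $\{n_j:j\in\N\}$ is an infinite set that contains no infinite subset of finite $\varphi$-measure (any infinite subset has members of arbitrarily large singleton mass), contradicting tallness. So $\sup_n\varphi(\{n\})<\infty$ is forced immediately; this is the easy half.

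The substantive half is showing $\lim_n\mu_k(\{n\})=0$ for every fixed $k$. Fix $k$ and suppose instead that there is $\varepsilon>0$ and an infinite set $D=\{n_1<n_2<\cdots\}$ with $\mu_k(\{n_j\})>\varepsilon$ for all $j$. I would like to conclude $\varphi(E)=\infty$ for every infinite $E\su D$, which would contradict tallness. Indeed, for finite $F\su E$ we have $\varphi(F)\ge\mu_k(F)=\sum_{n\in F}\mu_k(\{n\})>\varepsilon|F|$, so by lower semicontinuity $\varphi(E)=\lim_{F}\varphi(F\cap\{0,\dots,m\})\ge \sup_F \varepsilon|F|=\infty$. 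Hence no infinite subset of $D$ lies in $\fin(\varphi)$, contradicting tallness of $\fin(\varphi)$.

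The only remaining point is that the argument of the previous paragraph used a \emph{single} measure $\mu_k$ from the defining supremum; this is legitimate because each $\mu_k\le\varphi$ by hypothesis (the $\mu_k$ are the measures whose supremum is $\varphi$), so $\mu_k(A)\le\varphi(A)$ for every $A$ and in particular on finite sets. Thus both implications go through with essentially no computation. I do not foresee a genuine obstacle here: the lemma is a soft consequence of the definitions of tallness, of lower semicontinuity, and of the fact that the $\mu_k$'s are dominated by $\varphi$. The role of this lemma in the paper is to establish, together with Lemma~\ref{finitesupport}, the implication (ii)$\Rightarrow$(iii) of Theorem~\ref{c0tall}; condition (iii) as stated is exactly ``$\lim_n\mu_k(\{n\})=0$ for all $k$ and $\sup_n\varphi(\{n\})<\infty$,'' which is precisely the conjunction we have just derived from tallness.
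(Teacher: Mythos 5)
Your proof is correct and follows essentially the same line as the paper's. Both halves proceed by contraposition from tallness: for $\sup_n\varphi(\{n\})<\infty$ you extract a sequence of singletons of unbounded $\varphi$-mass and observe no infinite subset can have finite $\varphi$-measure; for $\lim_n\mu_k(\{n\})=0$ you fix $k$, find an infinite set where $\mu_k$ on singletons stays above some $\varepsilon>0$, and use $\mu_k\le\varphi$ (a consequence of $\varphi=\sup_k\mu_k$, as you correctly note) to see that any infinite subset has $\varphi$-measure $\infty$. The paper phrases the second half with the level sets $B_i=\{n:\mu_k(\{n\})>1/i\}$ and gets the contradiction directly from $|F|/i<\varphi(B)$ without explicitly invoking lower semicontinuity, but this is a cosmetic difference from your argument, not a different route.
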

\proof For the first claim, fix $k\in \N$ and consider $B_i=\{n\in\N:\; \mu_k(\{n\})>1/i\}$ for $i\geq 1$.  Suppose the conclusion is false.  Then $B_i$ is infinite for some $i$. Since $\fin(\varphi)$ is tall, there is $B\subseteq B_i$ infinite such that $\varphi(B)<\infty$. 
However, for every $F\subseteq B$ finite we have that $|F|/i< \mu_k(F)\leq \varphi(F)\leq \varphi(B)$, which contradicts that $\varphi(B)<\infty$. 

For the second claim, suppose it is false and  let $(n_k)_k$ be an increasing sequence in $\N$ such that $k\leq \varphi(\{n_k\})$. Clearly every infinite subset of $\{n_k:\; k\in \N\}$ does not belong to $\fin(\varphi)$, which contradicts the tallness of $\fin(\varphi)$.
\endproof

To each sequence of partitions $\mathcal{P}_n=\{A^n_i:\; i\in\N\cup\{\infty\}\}$ of $\N$ we associate a coloring  $c:[\N]^2\to 2$ as follows:  for $n<m$
\begin{equation}
\label{coloring2a}
c\{n,m\} =1 \;\text{iff} \; ( \forall i\in\N) \;(A^n_i\subseteq  A^m_\infty \cup \bigcup_{j=i+1}^{\infty} A^m_j).
\end{equation}

\begin{lema}
\label{color1}
Let $\varphi=\sup_k\mu_k$ be a nonpathological lscsm where $(\mu_k)_k$ is a sequence of measures on $\N$ such that  $\mu_k(\{n\})\leq 1$ for all $n$ and $k$. Let $\{A^n_i: \; i\in \N\cup\{\infty\}\}$, for $n\in \N$, be the sequence of partitions associated to $(\mu_k)_k$ and $c$ be  the associate coloring. Every $c$-homogeneous infinite set of color 1 belongs to $\fin(\varphi)$.
\end{lema}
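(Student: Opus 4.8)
The plan is to show that if $H$ is $c$-homogeneous of color $1$, then $\varphi(H)$ is finite by producing a single measure-like bound that works uniformly along $H$. Enumerate $H = \{n_0 < n_1 < n_2 < \cdots\}$. First I would unpack the definition \eqref{coloring2a}: for any two indices $n_p < n_q$ in $H$, and for every $i\in\N$, we have $A^{n_p}_i \subseteq A^{n_q}_\infty \cup \bigcup_{j>i} A^{n_q}_j$. Concretely this says: if $k\in A^{n_p}_i$, i.e. $2^{-i-1} < \mu_k(\{n_p\}) \le 2^{-i}$, then either $\mu_k(\{n_q\}) = 0$ or $\mu_k(\{n_q\}) \le 2^{-i-1} < \mu_k(\{n_p\})$. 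In other words, \emph{for a fixed index $k$, the sequence $p \mapsto \mu_k(\{n_p\})$ is strictly decreasing as long as it stays positive}, with each successive positive value at most half the previous one (more precisely, dropping to the next dyadic block or to $0$).

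The key step is then the uniform bound on finite partial sums. Fix any $k\in\N$ and any finite $F \subseteq H$, say $F = \{n_{p_0}, n_{p_1}, \dots, n_{p_r}\}$ with $p_0 < p_1 < \cdots < p_r$. Among those $n_{p_\ell}$ with $\mu_k(\{n_{p_\ell}\}) > 0$, the values $\mu_k(\{n_{p_\ell}\})$ lie in distinct dyadic blocks $(2^{-i-1}, 2^{-i}]$ and are strictly decreasing by the observation above — in fact, if $n_{p_\ell} \in A^{n_{p_\ell}}_{i_\ell}$ with $\mu_k(\{n_{p_\ell}\})>0$, then the $i_\ell$ are strictly increasing in $\ell$, so $\mu_k(\{n_{p_\ell}\}) \le 2^{-i_\ell} \le 2^{-\ell}$ (reindexing the positive ones). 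Hence
\[
\mu_k(F) = \sum_{\ell} \mu_k(\{n_{p_\ell}\}) \le \sum_{\ell \ge 0} 2^{-\ell} = 2.
\]
Since this holds for every finite $F\subseteq H$ and every $k$, and $\varphi = \sup_k \mu_k$, lower semicontinuity gives $\varphi(H) = \sup\{\varphi(F): F\subseteq H \text{ finite}\} = \sup_F \sup_k \mu_k(F) \le 2 < \infty$, so $H\in\fin(\varphi)$.

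The main obstacle — and the step to state carefully — is the claim that along a color-$1$ homogeneous set the $i$-indices $i_\ell$ (for those $n_{p_\ell}$ where $\mu_k(\{n_{p_\ell}\}) > 0$) are \emph{strictly} increasing in $\ell$, not merely nondecreasing; this is exactly what \eqref{coloring2a} delivers, since $A^{n_p}_i$ is forced into $\bigcup_{j>i} A^{n_q}_j \cup A^{n_q}_\infty$, excluding $A^{n_q}_i$ itself. One should also note that the bound is genuinely uniform in $k$, which is why we may apply $\sup_k$ freely; nonpathology of $\varphi$ is not actually needed for this lemma, only the representation $\varphi = \sup_k \mu_k$ and the normalization $\mu_k(\{n\}) \le 1$. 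If one prefers not to reindex, the estimate $\sum_\ell 2^{-i_\ell} \le \sum_{i\ge 0} 2^{-i} = 2$ using the strict monotonicity of the $i_\ell$ works directly and is perhaps cleaner.
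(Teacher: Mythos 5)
Your proof is correct and takes essentially the same route as the paper's: fix $k$, observe that along $H$ the dyadic indices $i_\ell$ of the positive values $\mu_k(\{n_{p_\ell}\})$ are strictly increasing by the coloring definition, and conclude $\mu_k(F)\le 2$ for every finite $F\subseteq H$, hence $\varphi(H)\le 2$. Your side remark is also accurate: the proof only uses the representation $\varphi=\sup_k\mu_k$ and the normalization $\mu_k(\{n\})\le 1$ (though this representation is precisely what makes $\varphi$ nonpathological, so the hypothesis is harmless).
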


\proof
Let $H=\{n_i:\;i\in\N\}$ be the increasing enumeration of an infinite homogeneous set of color 1.  We claim that for all $k$ and $m$ 
\[
\mu_k(\{n_0, \cdots, n_m\})=\sum_{i=0}^m \mu_k(\{n_i\})\leq 2,
\]
which implies that  $\varphi(H)\leq 2$. In fact, fix $k$ and $m$. We can assume without loss of generality that 
$\mu_k(\{n_i\})\neq 0$ for all $i\leq m$.  For each $i\leq m$, let $j_i$ be such that $k\in A^{n_i}_{j_i}$. Since $H$ is 1-homogeneous and  $k\in A^{n_0}_{j_0}$ and  $k\in A^{n_1}_{j_1}$, we have  $j_0<j_1$. In general, 
we conclude that $j_0<j_1<\cdots <j_m$.
Thus
\[
\displaystyle\sum_{i=0}^m \mu_k(\{n_{i}\})\leq \sum_{i=0}^\infty\frac{1}{2^{i}}.
\]
\endproof

\medskip

\noindent
{\em Proof of Theorem \ref{c0tall}:} Clearly (i) implies (ii).   Lemma \ref{tall-weaklynull} shows that (ii) implies (iii). 

Suppose (iii) holds. Let $M=\sup_n\varphi(\{n\})$. Using $\mu_k/M$ instead of $\mu_k$ we can assume that $\mu_k(\{n\})\leq 1$ for all $n$ and $k$.  By Lemma \ref{finitesupport}, we can also assume that the partition $\{A^n_i:\; i\in\N\cup\{\infty\}\}$  associated to $(\mu_k)_k$ satisfies that $L_n=\{i\in \N:\; A^n_i\neq \emptyset\}$ is finite for all $n$.
Let $c$ be the coloring given by \eqref{coloring2a}. We will show that $\hom(c)\su\fin(\varphi)$, then (i) follows  by Proposition \ref{Rmin}. By Lemma \ref{color1}, it suffices to show that every infinite $c$-homogeneous set is of color 1. That is, we have to show that for all $n$, there is $m>n$ such that $c\{n,l\}=1$ for all $l\geq m$.

Fix $n\in \N$. Since $(\mu_k)_k$ is of type $c_0$,  $A^n_i$ is finite for all $i\in \N$. Consider the following set:
$$
S=\{k\in\N: \; \mu_k(\{n\})\neq 0\}=\bigcup\{A^n_i: i\in L_n\}.
$$
As $L_n$ is finite,  $S$ is also finite. So,  
let $i_0$ be such that $2^{-i_0}<\mu_k(\{n\})$ for all $k\in S$. 
Since $\lim_m\mu_k(\{m\})=0$ for all $k$, there is $m>n$ such that
\[
(\forall l\geq m)(\forall k\in S)(\; \mu_k(\{l\})<2^{-i_0}\;).
\]
 That is to say, for all  $k\in S$ and all $l\geq m$, there $j>i_0$ so that $k\in A^l_j$. Thus $c\{n,l\}=1$.  
\endproof

\subsection{Borel selectors}
\label{BS}
Recall that a tall family  $\ideal$ admits a {\em Borel selector}, if there is a Borel function $F: [\N]^\omega\to  [\N]^\omega$  such that $F(A)\su A$ and  $F(A)\in \ideal$ for all $A$.   This notion was studied in \cite{GrebikHrusak2020,GrebikUzca2018,grebik2020tall}.  
In this section we analyze Borel selectors for nonpathological tall $F_{\sigma}$ ideals. 

The typical examples of families with Borel selectors are the collection of homogeneous sets.   For instance, $\mathcal{R}$ admits a Borel selector and thus  if $\mathcal{R}\leq_K \ideal$, then $\ideal$ has a Borel selector. More generally, notice that if $\ideal\leq_K \idealj$ and $\ideal$ has a Borel selector, then so does $\idealj$.   The collection of all $K\in K(\cantor)$ such that the ideal generated by $K$ has a Borel selector is a $\mathbf{\Sigma}^1_2$ (see \cite{GrebikUzca2018}), so it has the same complexity as the collection of codes for $F_{\sigma}$ ideals which are  Kat\v{e}tov above $\mathcal{R}$ \cite{GrebikHrusak2020}.  We do not know an example of an $F_{\sigma}$ ideal admitting a Borel selector which is not Kat\v{e}tov above $\mathcal{R}$. 
For concrete examples of $F_\sigma$ tall ideals without Borel selectors see \cite{grebik2020tall}.

Let   $\mathcal{Q}_n$, for each $n\in \N$,   be a  collection of pairwise disjoint subsets $\N$, say $\mathcal{Q}_n=\{B^n_i:\; i\in \N\}$. The sequence $(\mathcal{Q}_n)_n$ is   {\em eventually disjoint},  if there is $p\in \N$ such that  for all $n\neq m$
\begin{equation}
\label{evenDis}
\,B^n_i \cap B^m_i=\emptyset,\;\;\mbox{for all $i>p$}.
\end{equation}

\begin{teo}
\label{evenDis3}
Let $\varphi=\sup_k\mu_k$ where  each $\mu_k$ is a measure on $\N$ such that  $\mu_k(\{n\})\leq 1$ for all $n$ and $k$.     Suppose the sequence of partitions associate to $(\mu_k)_k$ is eventually disjoint.  If  $\fin(\varphi)$ is tall, then it has a Borel selector. 
\end{teo}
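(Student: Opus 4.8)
The plan is to combine the coloring machinery from Lemma~\ref{color1} with the eventual disjointness hypothesis to produce, in a Borel way, a $c$-homogeneous set of color $1$ inside any given infinite $E$. First I would recall the setup: since $\fin(\varphi)$ is tall, Lemma~\ref{tall-weaklynull} gives $\lim_n\mu_k(\{n\})=0$ for every $k$ and $\sup_n\varphi(\{n\})<\infty$; after rescaling we may assume $\mu_k(\{n\})\leq 1$ for all $n,k$, so the partitions $\mathcal{P}_n=\{A^n_i:\; i\in\N\cup\{\infty\}\}$ of \eqref{partition} and the coloring $c$ of \eqref{coloring2a} are available. By Lemma~\ref{color1}, every $c$-homogeneous set of color $1$ lies in $\fin(\varphi)$, so it suffices to build a Borel map $F$ with $F(E)\subseteq E$ infinite and $1$-homogeneous for $c$.

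\textbf{Key steps.} The heart of the matter is that eventual disjointness forces the color-$1$ relation to be, above the threshold $p$, ``almost'' an ordering that can be greedily followed. Concretely, fix $p$ as in \eqref{evenDis}. I would first argue that for $n<m$, the failure of $c\{n,m\}=1$ is witnessed by some index $i$ with $A^n_i\cap(A^m_0\cup\cdots\cup A^m_i)\neq\emptyset$, i.e.\ some $k$ with $\mu_k(\{n\})$ and $\mu_k(\{m\})$ ``comparable or $m$ smaller''; eventual disjointness says that for $i>p$ the sets $A^n_i$ and $A^m_i$ are disjoint, which should let me show that once we restrict attention to indices $n$ for which $\mu_k(\{n\})$ is small (below $2^{-p}$) for all relevant $k$, the color-$1$ graph on such $n$ contains long increasing chains that can be selected recursively. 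The Borel selector $F(E)$ is then defined by recursion: having chosen $n_0<n_1<\cdots<n_{j}$ inside $E$, pairwise $1$-related, let $S_j=\bigcup_{l\le j}\{k:\mu_k(\{n_l\})\neq 0\}$ (a finite set because each $\mathcal{P}_{n_l}$ has finite support once we invoke Lemma~\ref{finitesupport}), pick $i_j$ with $2^{-i_j}<\mu_k(\{n_l\})$ for all $k\in S_j$, $l\le j$, and let $n_{j+1}$ be the least element of $E$ above $n_j$ with $\mu_k(\{n_{j+1}\})<2^{-i_j}$ for all $k\in S_j$ — such an element exists by $\lim_m\mu_k(\{m\})=0$ and finiteness of $S_j$, exactly as in the last paragraph of the proof of Theorem~\ref{c0tall}. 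Each step searches an initial segment of $E$ and queries finitely many coordinates of the fixed data $(\mu_k)_k$, so $E\mapsto n_{j+1}$ is Borel (continuous, in fact), hence $F(E)=\{n_j:j\in\N\}$ is a Borel function of $E$; and by construction $c\{n_l,n_j\}=1$ for all $l<j$, so $F(E)\in\fin(\varphi)$.

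\textbf{Main obstacle.} The delicate point is verifying that the greedily chosen chain really is $1$-homogeneous, and in particular that the choice of threshold $i_j$ at stage $j$ (which controls only the pairs $(n_l,n_{j+1})$ for $l\le j$) does not get spoiled at later stages — this is where eventual disjointness beyond $p$, rather than mere disjointness of the $A^n_i$ for a single $n$, is needed, and it is why the reduction to finite support via Lemma~\ref{finitesupport} must be carried out before fixing $p$. I expect the bookkeeping to require care in matching the index $i$ appearing in \eqref{coloring2a} with the rescaling and with the finite sets $S_j$; the rest (Borelness, $F(E)\subseteq E$, infiniteness) is routine. If the naive greedy argument does not immediately give homogeneity, the fallback is to first pass to a subset of $\N$ on which $c$ restricted to large indices is genuinely transitive in the color-$1$ direction, and then the recursion trivially produces a chain.
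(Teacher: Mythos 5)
Your approach is genuinely different from the paper's — you try to produce a $1$-homogeneous set for the pair-coloring $c$ of~\eqref{coloring2a} by a greedy recursion, whereas the paper colors the Schreier barrier $\mathcal{S}=\{s\in\fin:|s|=\min(s)+1\}$ and invokes the Borel-selector theorem for Nash--Williams colorings from \cite{GrebikUzca2018}, showing that every infinite $c$-homogeneous set is of color $0$ and that every color-$0$ homogeneous set lies in $\fin(\varphi)$. Your route, if it worked, would give the stronger conclusion $\mathcal{R}\le_K\fin(\varphi)$. But there is a genuine gap at the central step: you assert that $S_j=\bigcup_{l\le j}\{k:\mu_k(\{n_l\})\neq 0\}$ is finite ``because each $\mathcal{P}_{n_l}$ has finite support once we invoke Lemma~\ref{finitesupport}.'' Lemma~\ref{finitesupport} only guarantees that $L_n=\{i:A^n_i\neq\emptyset\}$ is finite — finitely many nonempty \emph{levels} — and says nothing about the cardinality of each level $A^n_i$, which may well be infinite. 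Finiteness of $S_j=\bigcup_{l\le j}\bigcup_{i\in L_{n_l}}A^{n_l}_i$ requires each $A^n_i$ to be finite, and that is precisely what the type-$c_0$ hypothesis $\lim_k\mu_k(\{n\})=0$ provides in the proof of Theorem~\ref{c0tall}. That hypothesis is not available here. Without finiteness of $S_j$, the existence of $n_{j+1}$ is not guaranteed: each individual condition $\mu_k(\{m\})<2^{-i_j}$ excludes only finitely many $m$ (by Lemma~\ref{tall-weaklynull}), but a union over infinitely many $k\in S_j$ can exclude all but finitely many $m$, so the recursion can get stuck and $F(E)$ may be finite.

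A second, equally telling problem is that your construction never actually invokes the eventual-disjointness hypothesis: it is mentioned only in the ``main obstacle'' paragraph as something that should matter, but the recursion uses only finiteness of the supports and the decay $\lim_m\mu_k(\{m\})=0$, neither of which is where eventual disjointness enters. If the argument went through as written it would show that every tall ideal of the form $\fin(\sup_k\mu_k)$ (with $L_n$ finite) has a Borel selector, which is essentially the content of the open Question~\ref{nonpatho-selector}; that is a strong sign that something is missing. In the paper's proof, eventual disjointness is used exactly once but crucially: for a color-$0$ homogeneous set $H$ with $q=\min H$ and a finite $F\subseteq H\setminus\{q\}$, the elements $n\neq m$ of $F\setminus F_k$ have $k\in A^n_i\cap A^m_j$ with $i,j>p$ and (by~\eqref{evenDis}) $i\neq j$, giving $\sum_{n\in F\setminus F_k}\mu_k(\{n\})\le\sum_{i>p}2^{-i}$, which together with $|F_k|<q$ yields the uniform bound $\varphi(H\setminus\{q\})\le q+2$. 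Any fix of your argument would need to bring this kind of level-collision control into the picture and would need a correct substitute for the finiteness of the $A^n_i$; as it stands, the proposal does not establish the theorem. (Separately, you should also verify that the passage to $(\lambda_k)_k$ in Lemma~\ref{finitesupport} preserves eventual disjointness — the rounding there changes the level structure and this is not automatic.)
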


\proof  We recall that the Schreier barrier is the following collection of finite subsets of $\N$:
\[
\mathcal{S}=\{s\in\fin: \; |s|=\min(s)+1\}.
\] 
By the  well known theorem of Nash-Williams  \cite{nash-williams68}, any coloring $c:\mathcal{S}\to 2$ has homogeneous sets. More precisely, for any $N\subseteq \N$ infinite, there is  $M\subseteq N$ infinite such that for some $i\in\{0,1\}$,  
$c(s)=i$ for every $s\in [M]^{<\omega}\cap \mathcal{S}$. As usual, we denote by $\hom(c)$ the collection of all $c$-homogeneous sets. Moreover, $\hom(c)$ admits a Borel selector (\cite[Corollary 3.8]{GrebikUzca2018}). We will find a coloring $c$ of $\mathcal{S}$ such that $\hom(c)\subseteq \fin(\varphi)$ and thus $\fin(\varphi)$ also has a Borel selector. 

Let $p\in \N$ as in \eqref{evenDis}. Consider the following coloring $\mathcal{S}$:
\[
c(\{q,n_1, \cdots, n_q\})=1\;\;\; \mbox{iff there is $k\in \N$ such that $\mu_k(\{n_j\})\geq 2^{-p-1}$ for all $1\leq j\leq q$.} 
\]
We will show that  $hom(c)\su \fin(\varphi)$.

We first show that every infinite homogeneous set is of color 0.
In fact, let $A$ be an infinite set. It suffices to find $s\su A$ with  $s\in \mathcal{S}$ of color 0.   Since $\fin(\varphi)$ is tall, we also assume that 
$\varphi (A)<\infty$. 
Let $q\in A$ be such that $q2^{-p-1}>\varphi(A)$. Let $q<n_1<\cdots<n_q$ in $A$, we claim that 
$c(\{q,n_1, \cdots, n_q\})=0$. In fact, suppose not, and let $k$ be  such that  $\mu_k(\{n_j\})\geq 2^{-p-1}$ for all $1\leq j\leq q$. Thus 
\[
\varphi(A)\geq \mu_k(\{n_1, \cdots, n_q\})=\sum_{j=1}^q\mu_k(\{n_j\})\geq q 2^{-p-1}>\varphi(A),
\]
a contradiction. 

To finish the proof, it suffices to show that every infinite homogeneous set $H$ of color 0 belongs to $\fin(\varphi)$. 
Recall the definition of the partition $\mathcal{P}_n$ associated to $(\mu_k)_k$:   $\mathcal{P}_n=\{A^n_i:\; i\in\N\cup\{\infty\}\}$ where each piece $A^n_i$ is defined by \eqref{partition}. 

Let $q=\min(H)$ and $F\su H\setminus\{q\}$ be a finite set.  For each $k\in \N$, let 
\[
F_k=\{n\in F:\;\mu_k(\{n\})\geq 2^{-p-1}\}.
\]
Fix $k\in \N$. Since $H$ is homogeneous of color $0$, $|F_k|<q$. Let  $n,m\in F\setminus F_k$ with $n\neq m$.  Then $\mu_k(\{n\})<2^{-p-1}$ and $\mu_k(\{m\})<2^{-p-1}$. Thus $k\in A^n_i\cap A^m_j$ for some $i,j> p$ and by \eqref{evenDis}, we have that $i\neq j$.  Thus
\[
\sum_{n\in F\setminus F_k}\mu_k(\{n\})\leq \sum_{i>p} 1/2^i.
\]
Then
 \[
\mu_k(F)=\sum_{n\in F}\mu_k(\{n\})= \sum_{n\in F_k}\mu_k(\{n\})+\sum_{n\in F\setminus F_k}\mu_k(\{n\}) \leq q+2.
\]
As this holds for every $k$, we have that $\varphi(H\setminus\{q\})\leq q+2$. Thus $H\in \fin(\varphi)$.
\endproof

\begin{prop}
\label{evenDis2} 
Let $\mathcal{Q}_n$, for each $n\in \N$,  be a collection  of   pairwise disjoint subsets of $\N$, say $\mathcal{Q}_n=\{B^n_i:\;i\in\N\}$. Let $L_n=\{i\in \N:\: B^n_i\neq \emptyset\}$. Suppose there is $l\in \N$ such  that $|L_n|\leq l$ for every $n$. 
Then,  there is an infinite set $A\su \N$ such that  $(\mathcal{Q}_n)_{n\in A}$  is eventually disjoint.  
\end{prop}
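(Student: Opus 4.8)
The plan is to first normalize the cardinalities of the sets $L_n$ and then extract a $\Delta$-system (sunflower). Since $|L_n|\le l$ for every $n$, the pigeonhole principle gives an infinite set $A_0\su\N$ and an integer $k\le l$ with $|L_n|=k$ for all $n\in A_0$. Now $\{L_n:n\in A_0\}$ is an infinite family of $k$-element subsets of $\N$, so by the infinite sunflower lemma (the version for infinite families of finite sets of bounded size) there are an infinite $A\su A_0$ and a finite \emph{root} $R\su\N$ with $L_n\cap L_m=R$ for all distinct $n,m\in A$. I would either cite this as well known or record the two-line induction: passing to an infinite subfamily, either some fixed element lies in all the $L_n$ — remove it and induct on $k$ — or no element lies in infinitely many of them, in which case a greedy choice produces an infinite pairwise disjoint subfamily, i.e.\ a sunflower with empty root.

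It then remains to pick the witness $p$. Choose $p\in\N$ large enough that $R\su\{0,1,\dots,p\}$; this is possible since $R$ is finite (take $p=0$ if $R=\emptyset$). I claim $(\mathcal{Q}_n)_{n\in A}$ is eventually disjoint with this $p$. Indeed, let $n\ne m$ in $A$ and $i>p$. Then $i\notin R=L_n\cap L_m$, hence $i\notin L_n$ or $i\notin L_m$; by the definition of $L_n$ this says $B^n_i=\emptyset$ or $B^m_i=\emptyset$, so in either case $B^n_i\cap B^m_i=\emptyset$. That is precisely condition \eqref{evenDis} for the subsequence indexed by $A$, which finishes the argument.

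I do not expect a real obstacle here: the only non-bookkeeping ingredient is the infinite $\Delta$-system lemma, and the whole content of the statement is that a uniform bound on $|L_n|$ forces the overlaps $L_n\cap L_m$ to sit inside a single fixed finite set, since overlap of the index sets outside that common root is the only way two same-index pieces $B^n_i,B^m_i$ can be simultaneously nonempty. If one wanted to avoid quoting the sunflower lemma, the same conclusion could be reached by induction on $l$ directly in the language of the $B^n_i$, but invoking the $\Delta$-system is cleaner.
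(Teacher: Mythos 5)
Your proof is correct. Where the paper proves the claim by a self-contained induction on $l$ (base case $l=1$ split on whether the sequence $n\mapsto\min L_n$ can be made constant or injective on an infinite set; inductive step split on whether $\sup_n\min L_n$ is finite, peeling off the constant minimal index and inducting on the remaining $\mathcal{Q}_n\setminus\{B^n_{i_n}\}$), you instead invoke the infinite $\Delta$-system (sunflower) lemma for families of finite sets of uniformly bounded size. These are really the same combinatorial argument: the paper's induction on $l$ is, in effect, a proof of exactly the instance of the $\Delta$-system lemma you cite, with ``$\sup_n\min L_n=\infty$'' corresponding to the pairwise-disjoint (empty-root) case and the constant-$\min L_n$ case corresponding to putting an element into the root and lowering the size bound. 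Once one has the root $R$, your observation that $B^n_i\cap B^m_i\neq\emptyset$ forces $i\in L_n\cap L_m=R$, hence $i\le p$, is precisely what makes the chosen $p$ work; the paper verifies this implicitly within the induction rather than stating it once at the end. Two small remarks: the initial pigeonholing to make $|L_n|$ exactly $k$ is unnecessary, since the $\Delta$-system lemma already applies to families bounded in size by $l$; and in your sketch of the lemma the first alternative should read ``some element lies in infinitely many $L_n$'' (after which one passes to that infinite subfamily), not ``lies in all the $L_n$.'' Neither affects correctness.
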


\proof By induction on $l$. Suppose $l=1$. For each $n$ let $i_n$  be such that  $\mathcal{Q}_n=\{B^n_{i_n}\}$. We consider two cases: (a)   There is $A\su \N$ infinite such that $(i_n)_{n\in A}$ is constant. Then  $(\mathcal{Q}_n)_{n\in A}$  is eventually disjoint. (b) There is $A\su \N$ such that $i\in A\mapsto i_n$ is 1-1, then  $(\mathcal{Q}_n)_{n\in A}$  is eventually disjoint. 

 Suppose it holds for any sequence of pairwise disjoint sets with at most $l$ non empty sets.  Let  $\mathcal{Q}_n=\{B^n_i:\;i\in\N\}$ be such that $|L_n|\leq l+1$.  Let $i_n=\min(L_n)$. 
There are two cases to be considered. (a) Suppose  $\sup_n i_n=\infty$. Pick $A=\{n_k:\; k\in \N\}$  such that $\max(L_{n_k})<\min(L_{n_{k+1}})$. Then $(\mathcal{Q}_{n_k})_{k\in \N}$ is eventually disjoint. 
(b)  There is $B\su \N$ infinite such that $(i_n)_{n\in B}$ is constant. Let $\mathcal{Q}_n'=\mathcal{Q}_n\setminus \{B^n_{i_n}\}$.  Then, we can apply the inductive hypothesis to $ (\mathcal{Q}_n')_{n\in B}$ and find $A\su B$ infinite such that $ (\mathcal{Q}_n')_{n\in A}$ is eventually disjoint. Then $ (\mathcal{Q}_n)_{n\in A}$ is also eventually disjoint.
\endproof

From the previous result and Theorem  \ref{evenDis3} we have 

\begin{coro}
Let $\varphi=\sup_k\mu_k$ where  each $\mu_k$ is a measure on $\N$ such that  $\mu_k(\{n\})\leq 1$ for all $n$ and $k$.   Let $\{A^n_i:\; i\in \N\}\cup \{A^n_\infty\}$ be the associated partitions and $L_n=\{i\in \N: A^n_i\neq \emptyset\}$.
Suppose there is $l$ such that $|L_n|\leq l$ for all $n$. If $\fin(\varphi)$ is tall, then it has a Borel selector.
\end{coro}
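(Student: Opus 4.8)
The plan is to combine Proposition~\ref{evenDis2} with Theorem~\ref{evenDis3}, passing to a suitable subset of $\N$. Since $|L_n|\leq l$ for every $n$, the sequence of partitions $\{A^n_i:i\in\N\}$ (one for each $n\in\N$) satisfies the hypothesis of Proposition~\ref{evenDis2}, so there is an infinite $A\subseteq\N$ such that this sequence, restricted to $n\in A$, is eventually disjoint. Identifying $A$ with $\N$ through its increasing enumeration and replacing each $\mu_k$ by $\mu_k\restriction A$, the submeasure $\varphi\restriction A=\sup_k(\mu_k\restriction A)$ meets all hypotheses of Theorem~\ref{evenDis3}: its singletons are bounded by $1$; the partition sequence associated to $(\mu_k\restriction A)_k$ is exactly the previous one restricted to $A$, hence eventually disjoint; and $\fin(\varphi\restriction A)=\fin(\varphi)\restriction A$ is tall, because $\fin(\varphi)$ is tall and any infinite subset of $A$ is an infinite subset of $\N$. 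Theorem~\ref{evenDis3} therefore yields a Borel selector $S_A\colon[A]^\omega\to[A]^\omega$ for $\fin(\varphi)\restriction A$.

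The remaining, and main, difficulty is to promote $S_A$ to a Borel selector on all of $[\N]^\omega$. I would iterate the previous step: with $A_0=A$, apply Proposition~\ref{evenDis2} to the partition sequence restricted to $\N\setminus A_0$ to obtain $A_1\subseteq\N\setminus A_0$ making it eventually disjoint, and continue, at stage $j$ moreover forcing $A_j$ to contain the least point $m$ of $\N\setminus(A_0\cup\cdots\cup A_{j-1})$ (adjoining it if necessary); this preserves eventual disjointness since $A^m_i=\emptyset$ for $i>\max L_m$, so only boundedly many indices are affected. The result is a partition $\N=\bigsqcup_j A_j$ with each $(A^n_i)_{n\in A_j}$ eventually disjoint, hence a Borel selector $S_j\colon[A_j]^\omega\to[A_j]^\omega$ taking values in $\fin(\varphi)$ for each $j$. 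For an infinite $E$ meeting some $A_j$ in an infinite set one sets $S(E)=S_{j(E)}(E\cap A_{j(E)})$, where $j(E)$ is the least such $j$; this assignment is Borel.

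The obstruction is an infinite $E$ meeting every $A_j$ in a finite set. I would attack it by first stratifying $\N$ according to the dyadic magnitude of the singletons, putting $n\in C_v$ iff $\varphi(\{n\})\in(2^{-v-1},2^{-v}]$ (legitimate because, by Lemma~\ref{tall-weaklynull}, tallness forces $\sup_n\varphi(\{n\})<\infty$), and running the whole construction inside each $C_v$ after rescaling the $\mu_k$ by $2^v$. A set meeting every $C_v$ in a finite set has $\varphi(\{n\})\to 0$ along it, so by passing Borel-measurably to a sufficiently thin subsequence one lands in $\suma(\varphi)\subseteq\fin(\varphi)$; this leaves a set concentrated on a single stratum $C_v$, where after the rescaling every singleton has submeasure exceeding $\tfrac12$ while, again by Lemma~\ref{tall-weaklynull}, the map sending $n\in C_v$ to the least $k$ with $2^v\mu_k(\{n\})>\tfrac12$ is finite-to-one, so the situation resembles that of $\mathcal{ED}_{fin}$ and should yield to a Nash--Williams coloring whose homogeneous sets lie in $\fin(\varphi)\restriction C_v$, in the spirit of the proof of Theorem~\ref{evenDis3}. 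Assembling all of this into one genuinely Borel selector for $\fin(\varphi)$ --- and verifying that each case distinction is Borel --- is where the real care is required, and I expect this bookkeeping to be the main obstacle.
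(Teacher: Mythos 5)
Your first paragraph correctly reproduces what the paper intends: Proposition~\ref{evenDis2} produces an infinite $A$ on which the partitions are eventually disjoint, and Theorem~\ref{evenDis3} applied on $A$ gives a Borel selector $S_A\colon[A]^\omega\to[A]^\omega$ for $\fin(\varphi)\restriction A$. You are also right to flag that this, as stated, only yields a selector on $[A]^\omega$ and not on $[\N]^\omega$, since for $E$ with $E\cap A$ finite one has no way to evaluate $S_A$. The paper's one-line justification glosses over exactly this point, so identifying it is a legitimate observation, not a misreading.

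The difficulty is with the repair. Your plan is to fix a \emph{global} decomposition $\N=\bigsqcup_j A_j$ into eventually-disjoint pieces and do a case analysis on which piece $E$ hits infinitely. But nothing in Proposition~\ref{evenDis2} lets you stop after finitely many pieces, and you correctly concede that an infinite $E$ meeting every $A_j$ finitely is then unhandled. The subsequent attempt to stratify by the magnitude of $\varphi(\{n\})$ does not obviously close that hole: if $E$ meets every $C_v$ finitely you get $\varphi(\{n\})\to0$ along $E$ and hence a subset in $\suma(\varphi)$, fine, but it is not clear this can be combined Borel-measurably with the other cases, and the case ``$E$ concentrates on one $C_v$'' is left as a vague analogy with $\mathcal{ED}_{fin}$ rather than an argument. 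So the proposal, as written, does not actually finish the proof.

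The cleaner route is to work \emph{locally} rather than globally: for each $E\in[\N]^\omega$, Borel-compute a pair $(A(E),p(E))$ with $A(E)\subseteq E$ infinite and $(\mathcal{P}_n)_{n\in A(E)}$ eventually disjoint with threshold $p(E)$, by making canonical choices in the proof of Proposition~\ref{evenDis2} (which is an induction of depth $\leq l$ whose case distinctions --- ``$(i_n)_n$ takes some value infinitely often'' versus ``$\{i_n:n\}$ is unbounded'' --- are $\Sigma^0_2/\Pi^0_2$, hence Borel). For each fixed integer $p$, the coloring $c_p$ of the Schreier barrier from the proof of Theorem~\ref{evenDis3} admits a Borel selector $S_p$ by \cite[Corollary 3.8]{GrebikUzca2018}, and the Theorem~\ref{evenDis3} argument shows that $S_p(A(E))\in\fin(\varphi)$ whenever $(\mathcal{P}_n)_{n\in A(E)}$ is eventually disjoint at threshold $p$ (tallness of $\fin(\varphi)$ on $\N$ passes to every infinite subset, which is what the color-$0$ argument needs). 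Setting $S(E)=S_{p(E)}(A(E))$ then gives a genuine Borel selector on all of $[\N]^\omega$, with only countably many Borel pieces to patch. This avoids any global partition and the infinite-$E$-meeting-every-piece-finitely obstruction entirely.
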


The main open questions we have left are the following:

\begin{question}
\label{nonpatho-selector}
Let $\ideal$ be  nonpathological $F_\sigma$ tall ideal. Does  $\mathcal{R}\leq_K \ideal$? Does $\ideal$ have a Borel selector?
\end{question}

\section{$\mathcal{B}$-representable ideals}
\label{B-representable}

In this section we show how to represent $F_{\sigma}$ ideals in a Banach space following the ideas introduced in  \cite{Borodulinetal2015,Drewnowski}.  We first start with the representation in Polish abelian groups and later in Banach spaces. 

Let $(G,+,d)$ be a Polish abelian group. We emphasis that the metric $d$ is part of the definition. In fact the ideal associated to the group depends on the metric used. Let ${\bf x}= (x_n)_n$ be a sequence in $G$. We say that $\sum_n x_n$ is 
{\em unconditionally convergent}, if there is $a\in G$ such that  $x_{\pi(0)}+x_{\pi(1)}+\cdots+ x_{\pi(n)} \to a$ for every permutation $\pi$ of $\N$.  We say that ${\bf x}$ is  {\em perfectly bounded}, if there exists $k>0$ such that for every $F\in \fin$ 
$$
d\left (0,\sum_{n\in F}x_n \right )\leq k.
$$
We introduce two ideals.  Given ${\bf x}= (x_n)_n$ a sequence in $G$, let 
$$
\mathcal{C}({\bf x})=\left \{ A\subseteq \mathbb{N}: \sum_{n\in A} x_n
 \text{ is unconditionally convergent}\right \}
 $$
and
$$
\mathcal{B}({\bf x})=\left \{ A\subseteq \mathbb{N}: \sum_{n\in A} x_n
 \text{ is perfectly bounded }\right \}.
 $$

We observe that the ideal $\B(x)$  depends on the metric of the group, not just on the topology, as is the case for $\C({\bf x})$.   

An ideal  $\ideal$  is  {\em Polish $\mathcal{B}$-representable}  (resp. {\em Polish $\C$-representable}) if there exists a Polish abelian group $G$ and  a sequence ${\bf x}=(x_n)_n$ in $G$ such that $\ideal=\mathcal{B}({\bf x})$ (resp. $\ideal=\C({\bf x})$). 
Polish $\C$-representable ideals were studied in \cite{Borodulinetal2015}.  As a consequence of Solecki's Theorem \ref{solecki} they proved the following.

\begin{teo}
\label{Polish C-repre}
 (Borodulin et al \cite{Borodulinetal2015}) An ideal $\ideal$ is Polish $\C$-representable iff it is an analytic $P$-ideal.
\end{teo}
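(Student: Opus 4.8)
The statement to prove is an equivalence: an ideal $\ideal$ is Polish $\C$-representable if and only if it is an analytic $P$-ideal. For the forward direction, the plan is to start from a sequence ${\bf x}=(x_n)_n$ in a Polish abelian group $(G,+,d)$ and show that $\C({\bf x})$ is an analytic $P$-ideal. Analyticity is immediate once one writes out the definition of unconditional convergence as a countable combination of closed conditions: $\sum_{n\in A}x_n$ converges unconditionally iff for every $\varepsilon>0$ there is a finite $F_0$ such that $d(0,\sum_{n\in F}x_n)<\varepsilon$ for every finite $F\subseteq A\setminus F_0$ (the Cauchy criterion for unconditional convergence), and this is a Borel — in fact $\Pi^0_3$ — condition on $A\in 2^{\N}$. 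The $P$-ideal property is the heart of this direction: given a sequence $(A_j)_j$ in $\C({\bf x})$, one must diagonalize. The idea is to choose, for each $j$, a finite set $F_j\subseteq A_j$ such that the tail $A_j\setminus F_j$ contributes less than $2^{-j}$ (in the sup-over-finite-subsums sense), and then set $A=\bigcup_j(A_j\setminus F_j)$ after further shrinking so that the pieces stay ``far out''; one checks $A_j\setminus A\subseteq F_j\cup(\text{finite})$ is finite for each $j$, and that $\sum_{n\in A}x_n$ converges unconditionally because its tails are controlled by $\sum_{j\ge N}2^{-j}$. One also has to verify the ideal axioms (downward closure is clear; closure under unions uses that a finite union of unconditionally convergent sub-series is unconditionally convergent) and that $\N\notin\C({\bf x})$ in the nontrivial case, or otherwise observe that $\C({\bf x})=\mathcal{P}(\N)$ is excluded by the convention that ideals are proper — this last point may need a small remark, possibly by passing to a subsequence or noting the trivial case is degenerate.

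For the reverse direction, the plan is to invoke Solecki's Theorem \ref{solecki}: if $\ideal$ is an analytic $P$-ideal then $\ideal=\exh(\varphi)$ for a bounded lscsm $\varphi$, and moreover one has the density-ideal-like structure that Solecki's representation provides, namely $\varphi$ can be taken of the form $\varphi(A)=\sup_k\|(a_n)_{n\in A\cap[\,\text{block }k\,]}\|$ built from a sequence of finitely supported submeasures, or more concretely $\ideal$ is (up to the Solecki normal form) controlled by a sequence of reals. The cleanest route is: realize $\exh(\varphi)$ as $\C({\bf x})$ for a suitable sequence in the group $G=c_0$ (or in a countable product $\prod_k\ell^1(\text{block }k)$ with the product-type metric) by taking $x_n$ to be, essentially, the vector recording the values $\mu_k(\{n\})$ of the measures appearing in Solecki's representation, weighted so that unconditional convergence of $\sum_{n\in A}x_n$ translates exactly into $\lim_{N}\varphi(A\setminus\{0,\dots,N\})=0$. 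One must check both inclusions: if $A\in\exh(\varphi)$ then the partial sums of $\sum_{n\in A}x_n$ are Cauchy uniformly over rearrangements (the exhaustive condition gives exactly the needed tail control in each coordinate block simultaneously, using boundedness of $\varphi$), and conversely unconditional convergence forces the $\varphi$-tails to zero.

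The main obstacle I expect is the reverse direction's construction of the group and sequence: one needs the metric on $G$ to be chosen so that ``perfectly bounded-style'' uniform control over \emph{all} finite subsets (which is what unconditional convergence amounts to, via the Cauchy criterion) matches the $\exh$ condition rather than the $\fin$ condition — i.e. one must encode a vanishing-tail phenomenon, not merely a boundedness phenomenon, and that is precisely why $c_0$ (where norm-boundedness of all finite subsums of a series already forces the terms, hence tails in the right sense, to behave well) or a $c_0$-like countable product is the right ambient group, and why the theorem produces exactly analytic $P$-ideals and not all $F_\sigma$ ideals. Getting the quantifiers in Solecki's normal form to line up with the triangle-inequality estimates for the chosen translation-invariant metric $d$ on $G$ is the delicate bookkeeping; everything else (analyticity, the ideal axioms, the $P$-ideal diagonalization) is routine once the framework is set up. I would also remark that this is essentially the argument of Borodulin-Nadzieja et al.\ and cite it for the details I omit.
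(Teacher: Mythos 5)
The paper does not actually prove this theorem; it cites it as a result of Borodulin-Nadzieja, Farkas and Plebanek, so there is no in-paper argument to compare your plan against. Judged on its own merits, your forward direction is sound: unconditional convergence of $\sum_{n\in A}x_n$ is a Cauchy-type $\Pi^0_3$ condition equivalent to $A\in\exh(\varphi_{\bf x})$, and $\exh$ of any lscsm is a Borel $P$-ideal by a standard diagonalization (one does need to arrange the $A_j$ to be increasing and the cutoffs $n_j$ increasing for the tail estimate to close, but that is routine).

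The reverse direction, however, has a genuine gap. You propose to take Solecki's submeasure $\varphi$ with $\ideal=\exh(\varphi)$ and build $x_n\in c_0$ (or a $c_0$-like product) out of ``the values $\mu_k(\{n\})$ of the measures appearing in Solecki's representation.'' But Solecki's Theorem \ref{solecki} produces a lower semicontinuous submeasure, not a supremum of measures; it gives no $\mu_k$ at all. If you manufacture measures yourself (e.g.\ take $\mu_k$ ranging over all measures $\leq\varphi$), the resulting $\varphi_{\bf x}$ is $\widehat{\varphi}$, the nonpathological hull, and you obtain $\C({\bf x})=\exh(\widehat{\varphi})$, which can differ from $\exh(\varphi)$ when $\varphi$ is pathological. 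Indeed Theorem \ref{C-repre} in this very paper says that $\C$-representability \emph{in a Banach space} characterizes exactly the $P$-ideals of the form $\exh(\varphi)$ with $\varphi$ nonpathological; since pathological analytic $P$-ideals exist (this is the setting of Hru\v{s}\'ak's Measure Dichotomy cited in the paper), any Banach-space or $c_0$-type ambient group is structurally too restrictive for the general statement. The correct construction, as in the paper's parallel treatment of $\B$-representability, is to take the group to be $(\mathcal P(\N),\triangle)$ or a suitable subgroup/quotient such as $(\exh(\varphi),\triangle)$ equipped with the translation-invariant metric $d(A,B)=\varphi(A\triangle B)$, which is complete and Polish precisely because $\exh(\varphi)$ is an analytic $P$-ideal, and then take $x_n=\{n\}$; this metric group is typically not a Banach space, and that extra freedom is exactly what the theorem is measuring. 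So the main obstacle you flagged (``getting the quantifiers to line up'') is not the real one; the real issue is that the ambient group must carry a submeasure-induced metric rather than a norm.
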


\subsection{$\B$-representability in Polish groups}

Let $G$ be a Polish abelian group, $d$ a complete, translation invariant metric on $G$  and ${\bf x}=(x_n)_n$ a sequence in $G$.  We associate to ${\bf x}$ a lscsm $\varphi_{\bf x}$ as follows:  $\varphi_{\bf x}(\varnothing)=0$ and if $A\neq \varnothing$ we let
$$
\varphi_{\bf x}(A)=\sup \left \{ d\left (0,\sum_{n\in F}x_n \right ): \varnothing \neq F \in [A]^{< \omega} \right \}.
$$

We show that $\varphi_{\bf x}$ is indeed a lscsm.  From its definition, it is clear that $\varphi_{\bf x}(A)=\lim_{n\to\infty}\varphi_{\bf x}(A\cap\{0,1,\cdots, n\})$ for every $A\subseteq \N$. 
Let $A$ and $B$ be finite disjoint subsets of $\N$. Then, by the translation invariance of $d$, we have 

\[
d(0,\sum_{n\in A\cup B}x_n)=d(0, \sum_{n\in A}x_n+\sum_{n\in B}x_n)\leq d(0,\sum_{n\in A}x_n)+d(\sum_{n\in A}x_n,\sum_{n\in A}x_n+\sum_{n\in B}x_n)= d(0,\sum_{n\in A}x_n)+d(0,\sum_{n\in B}x_n).
\]

\medskip

Let $A,B$ be arbitrary subsets of $\N$,  and $\varepsilon>0$. Take a finite subset $F$ of $A\cup B$ such that $d(0,\sum_{n\in F}x_n)\geq \varphi_{\bf x}(A\cup B)-\varepsilon$.  
Since $d(0,\sum_{n\in F}x_n)\leq d(0,\sum_{n\in F\cap A}x_n)+d(0,\sum_{n\in F\setminus A}x_n)\leq \varphi_{\bf x}(A)+\varphi_{\bf x}(B)$,  it follows that $ \varphi_{\bf x}$ is subadditive and thus is a lscsm.

\begin{lema}
\label{B-F-sigma}
Let $G$ be a Polish abelian group and ${\bf x}=(x_n)_n$ a sequence in $G$. 
Then $\mathcal{B}({\bf x })=\fin(\varphi_{\bf x})$ and  $\B({\bf x})$ is $F_{\sigma}$.
\end{lema}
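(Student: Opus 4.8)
The plan is to verify the three assertions in sequence, using the computations already carried out in the text just before the statement. The fact that $\varphi_{\bf x}$ is a lscsm has essentially been established in the paragraphs preceding the lemma: subadditivity and monotonicity follow from the translation-invariance of $d$ and the triangle inequality (the displayed chains of inequalities above), $\varphi_{\bf x}(\varnothing)=0$ by definition, and $\varphi_{\bf x}(\{n\})=d(0,x_n)<\infty$ for each $n$. The only clause of the definition of a lscsm not spelled out there is lower semicontinuity itself, i.e. $\varphi_{\bf x}(A)=\lim_m \varphi_{\bf x}(A\cap\{0,\dots,m\})$; this I would get immediately from the fact that $\varphi_{\bf x}(A)$ is defined as a supremum over \emph{finite} subsets $F$ of $A$, since any such $F$ is contained in $A\cap\{0,\dots,m\}$ for $m$ large enough, so the finite truncations exhaust the supremum.

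Next I would prove $\mathcal{B}({\bf x})=\fin(\varphi_{\bf x})$, which is almost a tautology once the definitions are unwound. If $A\in\mathcal{B}({\bf x})$, there is $k>0$ with $d(0,\sum_{n\in F}x_n)\le k$ for every finite $F\subseteq A$ (here I use that $A\in\mathcal{B}({\bf x})$ means the bound holds for all finite subsets of $A$, not merely of $\N$ — one should state the definition of $\mathcal{B}({\bf x})$ carefully so that $F$ ranges over finite subsets of $A$), hence $\varphi_{\bf x}(A)\le k<\infty$, so $A\in\fin(\varphi_{\bf x})$. Conversely if $\varphi_{\bf x}(A)<\infty$ then $k=\varphi_{\bf x}(A)$ witnesses perfect boundedness of $\sum_{n\in A}x_n$, so $A\in\mathcal{B}({\bf x})$. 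Finally, that $\mathcal{B}({\bf x})$ is $F_\sigma$: write $\mathcal{B}({\bf x})=\bigcup_{k\in\N}\mathcal{K}_k$ where $\mathcal{K}_k=\{A\subseteq\N:\varphi_{\bf x}(A)\le k\}$, and observe each $\mathcal{K}_k$ is closed in $2^{\N}$ because it is an intersection over finite $F$ of the clopen conditions $\{A: F\not\subseteq A \text{ or } d(0,\sum_{n\in F}x_n)\le k\}$; equivalently, lower semicontinuity of $\varphi_{\bf x}$ makes each sublevel set $\{\varphi_{\bf x}\le k\}$ closed. (One should also remark that $\mathbb{N}\notin\mathcal{B}({\bf x})$ is not automatic, so strictly speaking $\mathcal{B}({\bf x})$ is an $F_\sigma$ ideal only when it is proper; if the paper's convention allows the improper ideal this is a non-issue, otherwise one adds the hypothesis that $\sum x_n$ is not perfectly bounded.)

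There is no serious obstacle here; the statement is a bookkeeping lemma assembling facts already proved in the surrounding text. The one point deserving genuine care — and the closest thing to a ``main difficulty'' — is making sure the definition of $\mathcal{B}({\bf x})$ used in the proof matches the one needed: $A\in\mathcal{B}({\bf x})$ must mean ``$\sum_{n\in A}x_n$ is perfectly bounded'' with the bounding constant $k$ valid for all finite $F\subseteq A$ (so that it is equivalent to $\varphi_{\bf x}(A)<\infty$), rather than some condition about finite subsets of $\N$. With that pinned down, the equivalence $\mathcal{B}({\bf x})=\fin(\varphi_{\bf x})$ and the $F_\sigma$ conclusion via Theorem~\ref{mazur}-style sublevel sets are immediate, and one need only cite the preceding paragraphs for subadditivity. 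I would therefore keep the written proof to a few lines: cite the earlier verification of subadditivity, add the one sentence on lower semicontinuity, unwind the definition for the equality of ideals, and note that the sublevel sets of a lscsm are closed.
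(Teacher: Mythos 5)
Your proposal is correct and follows essentially the same route as the paper: the paper's displayed proof only unwinds the definitions to check $\mathcal{B}({\bf x})=\fin(\varphi_{\bf x})$, relying on the preceding paragraph for subadditivity of $\varphi_{\bf x}$ and on the general fact that $\fin$ of a lscsm is $F_\sigma$. Your extra sentences (spelling out lower semicontinuity via the supremum over finite sets, and noting the sublevel sets are closed) fill in what the paper leaves implicit, and your caveat about $A\in\mathcal{B}({\bf x})$ meaning ``$k$ bounds all finite $F\subseteq A$'' is exactly the reading the paper intends.
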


\begin{proof}
Let $A\in \mathcal{B}({\bf x})$. Then there exists $k>0$ such that for every $\varnothing \neq F\in [A]^{<\omega}$,
$$
d\left (0, \sum_{n\in F}x_n \right )\leq k.
$$
By the definition of $\varphi_{\bf x}$, we have $\varphi_{\bf x}(A)\leq k$. Hence $A\in \fin(\varphi_{\bf x})$.
Conversely, assume that $A\in \fin(\varphi_{\bf x})$, then there exists $k>0$ such that $\varphi_{\bf x}(A)\leq k$. By the definition of $\varphi_{\bf x}$, we clearly  have that $A\in \mathcal{B}({\bf x })$. Thus $\mathcal{B}({\bf x })=\fin(\varphi_{\bf x})$. Finally, since $\fin(\varphi_{\bf x})$ is $F_\sigma$ (see Theorem \ref{mazur}), so is $\B({\bf x})$.
 \end{proof}

\begin{teo} The following statements are equivalent for any ideal $\ideal$ on $\N$. 
\begin{itemize}
\item[(i)] $\ideal$ is $F_\sigma$.

\item[(ii)] $\ideal$  is $\B$-representable in $(\fin,d)$ for some  compatible metric $d$ on $\fin$ (as discrete topological group).

\item[(iii)] $\ideal$ is Polish $\B$-representable. 
 \end{itemize}
 \end{teo}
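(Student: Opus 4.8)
\medskip
\noindent\textbf{Proof plan.} The plan is to establish the cycle (i)$\Rightarrow$(ii)$\Rightarrow$(iii)$\Rightarrow$(i). Two of these implications are essentially free. For (iii)$\Rightarrow$(i) I would simply invoke Lemma~\ref{B-F-sigma}: if $\ideal=\B({\bf x})$ for a sequence ${\bf x}$ in a Polish abelian group, then $\ideal=\fin(\varphi_{\bf x})$ is $F_\sigma$ (equivalently, $\B({\bf x})=\bigcup_{k\geq 1}\{A\su\N:\ d(0,\sum_{n\in F}x_n)\le k\text{ for all }F\in[A]^{<\omega}\}$, and each set in this union is closed in $2^\N$ since a violation is witnessed by a finite subset). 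For (ii)$\Rightarrow$(iii) I would note that the group $\fin$, taken with symmetric difference $\triangle$ as its operation and equipped with the metric built below (which is complete and translation invariant), is a countable, hence separable, and completely metrizable abelian group, i.e.\ a Polish abelian group; so a $\B$-representation in $(\fin,d)$ is in particular a Polish $\B$-representation. Thus the real content is the implication (i)$\Rightarrow$(ii).

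For (i)$\Rightarrow$(ii) the idea is to turn Mazur's submeasure into a group metric on $\fin$ in which the singletons generate $\ideal$. First I would apply Theorem~\ref{mazur} to fix an integer-valued lscsm $\varphi$ with $\varphi(\N)=\infty$, $\varphi\{n\}=1$ for all $n$, and $\ideal=\fin(\varphi)$. On the group $(\fin,\triangle)$ define
\[
d(E,F)=\varphi(E\triangle F).
\]
I would then check that $d$ is a metric: symmetry is clear since $E\triangle F=F\triangle E$; $d(E,F)=0$ iff $E\triangle F=\varnothing$ iff $E=F$, using that $\varphi$ is monotone with $\varphi\{n\}=1$, so $\varphi(A)\ge 1$ for every $A\neq\varnothing$; and the triangle inequality follows from $E\triangle G\su(E\triangle F)\cup(F\triangle G)$ together with monotonicity and subadditivity of $\varphi$, which give $\varphi(E\triangle G)\le\varphi(E\triangle F)+\varphi(F\triangle G)$. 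Moreover $d$ is translation invariant since $(E\triangle H)\triangle(F\triangle H)=E\triangle F$, it is complete because $d(E,F)\ge 1$ whenever $E\neq F$ (so a Cauchy sequence is eventually constant), and for the same reason every ball of radius $\tfrac12$ is a singleton, so $d$ induces the discrete topology on $\fin$. Hence $(\fin,\triangle,d)$ is a Polish abelian group and $d$ is a compatible metric on $\fin$ as a discrete topological group.

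To finish I would take ${\bf x}=(\{n\})_n$. Since the singletons are pairwise disjoint, $\sum_{n\in F}\{n\}=F$ for every $F\in\fin$, and the identity of the group is $\varnothing$, so $d(0,\sum_{n\in F}x_n)=\varphi(F)$. Therefore, for every $A\su\N$,
\[
A\in\B({\bf x})\iff \sup\{\varphi(F):\ F\in[A]^{<\omega}\}<\infty\iff \varphi(A)<\infty\iff A\in\ideal,
\]
where the middle equivalence is the lower semicontinuity of $\varphi$. This gives $\ideal=\B({\bf x})$, completing the cycle. The one delicate point is that a single metric must simultaneously (a) induce the discrete topology, (b) be complete, and (c) still detect membership in $\fin(\varphi)$ through perfect boundedness; this is exactly why Mazur's normalization $\varphi\{n\}=1$ is used, as it forces $\varphi\ge 1$ on nonempty sets (securing (a) and (b)) while keeping $\fin(\varphi)=\ideal$ (securing (c)). Everything else is routine verification of the submeasure and metric axioms.
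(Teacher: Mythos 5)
Your proposal is correct and takes essentially the same route as the paper: establish the cycle (i)$\Rightarrow$(ii)$\Rightarrow$(iii)$\Rightarrow$(i), obtain (iii)$\Rightarrow$(i) from Lemma~\ref{B-F-sigma}, observe (ii)$\Rightarrow$(iii) since $(\fin,d)$ is countable and complete, and for (i)$\Rightarrow$(ii) invoke Mazur's theorem for an integer-valued $\varphi$ with $\varphi\{n\}=1$, set $d(A,B)=\varphi(A\triangle B)$, take ${\bf x}=(\{n\})_n$, and identify $\B({\bf x})=\fin(\varphi)=\ideal$. The only cosmetic difference is that you verify directly that $d$ is a complete, translation-invariant metric inducing the discrete topology (via $\varphi\ge 1$ on nonempty sets), whereas the paper delegates these facts to Solecki's theorem after noting $\exh(\varphi)=\fin$; your version is slightly more self-contained but not a different argument.
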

 
 \proof 
 By Lemma \ref{B-F-sigma}, (i) follows from (iii) and clearly (ii) implies (iii). To see that 
(i) implies (ii), let $\ideal$ be a $F_{\sigma}$ ideal on $\N$. By Theorem \ref{mazur}, there is a lscsm $\varphi$ taking  values on $\N\cup\{\infty\}$ such that $\ideal=\fin(\varphi)$. Then $\exh{(\varphi)}=\fin$.  From the proof of Solecki's theorem \ref{solecki} we know that the complete metric on $\fin$ given by $d(A,B)=\varphi (A\triangle B)$ is compatible with the group structure of $\fin$. 
Let $x_n=\{n\}$ and ${\bf x}=(x_n)_n$.  We claim that $\ideal=\B({\bf x})$ in the Polish group $(\fin, d)$. 
First note that for every $\emptyset\neq F\in \fin$, $ F=\sum_{n\in F}x_n$. 
Thus
\[
d\left (\varnothing, \sum_{n\in F}x_n \right )=\varphi(F). \label{Note33}
\]
By the lower semicontinuity of $\varphi$, we conclude that $\varphi=\varphi_{\bf x}$. Therefore, by Lemma \ref{B-F-sigma}, $\fin(\varphi)=\B({\bf x})$.

\qed
 
Notice that the proof of  previous result shows  that  what matter for  this type of representation is the translation invariant metric used on $\fin$, the topology is irrelevant as it can be assumed to be the discrete topology.

\subsection{$\B$-representability in Banach spaces}

The motivating example of Polish representability is when the group is a Banach space. We rephrase the definitions of $\C({\bf x})$ and $\B({\bf x})$ for the context of a Banach space.
Let  ${\bf x}=(x_n)_n$ be a sequence in  $X$. 

\begin{itemize}
\item $\sum x_n$ converges unconditionally, if $\sum x_{\pi(n)}$ converges for all permutation $\pi:\N \rightarrow \N$.
    
\item $\sum x_n$ is perfectly bounded, if there is  $k>0$ such that for all $F\subset \N$ finite, $
\left \| \sum_{n\in F}x_n \right \|\leq k$.

\item The lscsm associated to $\bf x$ is given by  $\varphi_{\bf x}(\emptyset)=0$ and for $A\su \N$ non empty, we put
\begin{equation}
\label{fisubx}
\varphi_{\bf x}(A)=\sup\{ \|\sum_{n\in F}x_n\|:\;  F\su A\; \mbox{is finite non empty}\}.
\end{equation}
\end{itemize}

A  motivation for studying $\B({\bf x})$ comes from the next result (part (iii) was not explicitly included but follows from the proof Theorem 1.3  of \cite{Drewnowski}). 
\begin{teo}\label{DreLabu} (Drewnowski-Labuda \cite{Drewnowski}) Let $X$ be a Banach space. The following statements are equivalent:
\begin{itemize}
    \item[(i)] $X$ does not contain an isomorphic copy of  $c_0$.
    \item[(ii)]  $\mathcal{C}({\bf{x}})$ is $F_\sigma$ in  $2^{\N}$ for each sequence ${\bf{x}}$ in  $X$. 
    \item[(iii)]  $\mathcal{C}({\bf{x}})=\mathcal{B}({\bf{x}})$ for each sequence ${\bf{x}}$ in $X$.
\end{itemize}
\end{teo}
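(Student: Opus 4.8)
The plan is to reduce the whole equivalence to the Bessaga--Pe\l{}czy\'{n}ski $c_0$ theorem, using the machinery already in place (Lemma \ref{B-F-sigma} and Theorems \ref{solecki} and \ref{Polish C-repre}). The first step, purely elementary, is to record that for a sequence ${\bf x}=(x_n)_n$ in $X$ the condition ``$\sum_{n\in A}x_n$ is perfectly bounded'' coincides with ``$\sum_{n\in A}x_n$ is weakly unconditionally Cauchy'' (WUC): if $\sup\{\|\sum_{n\in F}x_n\|:F\su A\text{ finite}\}=k$, then for a finite $F\su A$ and signs $\varepsilon_n=\pm1$ one gets $\|\sum_{n\in F}\varepsilon_nx_n\|\le 2k$ by splitting $F$ into its positive and negative parts, and the general bounded-multiplier version follows by convexity; the converse is immediate. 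Thus $\B({\bf x})=\{A\su\N:\sum_{n\in A}x_n\text{ is WUC}\}$, while $\C({\bf x})=\{A\su\N:\sum_{n\in A}x_n\text{ converges unconditionally}\}$, and $\C({\bf x})\su\B({\bf x})$ always holds because an unconditionally convergent series has a bounded set of finite partial sums.

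For (i)$\,\Rightarrow\,$(iii) I would invoke Bessaga--Pe\l{}czy\'{n}ski in the form: if $X$ contains no isomorphic copy of $c_0$, then every WUC series in $X$ converges unconditionally. Hence if $A\in\B({\bf x})$ then $\sum_{n\in A}x_n$ is WUC, so unconditionally convergent, i.e.\ $A\in\C({\bf x})$; together with the trivial inclusion this yields $\C({\bf x})=\B({\bf x})$ for every ${\bf x}$. For (iii)$\,\Rightarrow\,$(ii): if $\C({\bf x})=\B({\bf x})$, then by Lemma \ref{B-F-sigma} it equals $\fin(\varphi_{\bf x})$, which is $F_\sigma$.

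For (ii)$\,\Rightarrow\,$(i) I argue by contraposition: assuming $X\supseteq c_0$, I construct ${\bf x}$ with $\C({\bf x})$ not $F_\sigma$. Fix a basic sequence $(y_n)_n$ in $X$ equivalent to the unit vector basis of $c_0$, the interval partition $I_n=[2^n,2^{n+1})$ of $\N$, and put $x_i=2^{-n}y_n$ for $i\in I_n$. A finite partial sum is $\sum_{i\in F}x_i=\sum_n\frac{|F\cap I_n|}{2^n}\,y_n$, whose norm is comparable to $\sup_n\frac{|F\cap I_n|}{2^n}$, so a short computation shows that $\sum_{i\in A}x_i$ converges unconditionally precisely when $\frac{|A\cap I_n|}{2^n}\to 0$; that is, $\C({\bf x})=\exh(\varphi)$ for $\varphi(A)=\sup_n\frac{|A\cap I_n|}{2^n}$. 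This ideal is the density-zero ideal $\mathcal{Z}$, a classical example of an analytic $P$-ideal that is not $F_\sigma$ (one sees directly that $\fin(\varphi)\supsetneq\exh(\varphi)$, e.g.\ by taking $A$ that meets each $I_n$ in exactly half of its points); since $\C({\bf x})$ is an analytic $P$-ideal by Theorem \ref{Polish C-repre}, this contradicts (ii).

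As to where the difficulty lies: the real content is the implication (i)$\,\Rightarrow\,$(iii), and it is essentially a restatement of the Bessaga--Pe\l{}czy\'{n}ski dichotomy, which I treat as a black box. The two points that need genuine care are the identification of ``perfectly bounded'' with WUC --- so that $\B({\bf x})$ is exactly the collection of WUC subseries --- and, in (ii)$\,\Rightarrow\,$(i), the verification that the block construction lands on a non-$F_\sigma$ ideal, for which recognizing $\mathcal{Z}$ and invoking Theorem \ref{solecki} is enough. No individual step is deep once Bessaga--Pe\l{}czy\'{n}ski and Theorems \ref{solecki} and \ref{Polish C-repre} are available.
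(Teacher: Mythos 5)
The paper does not give a proof of this statement; it is cited as Theorem 1.3 of Drewnowski--Labuda, with the remark that (iii) follows from the proof in that reference. So there is no internal proof to compare against, and your proposal should be judged on its own.

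Your argument is correct and follows the standard route. The identification of ``perfectly bounded'' with weakly unconditionally Cauchy is exactly the content of Proposition \ref{wuc} (via \cite{albiac-kalton}); your sign-splitting argument is a correct direct proof of one half. The implication (i)$\Rightarrow$(iii) is precisely the Bessaga--Pe{\l}czy\'{n}ski $c_0$-theorem, and (iii)$\Rightarrow$(ii) via Lemma \ref{B-F-sigma} is immediate. In (ii)$\Rightarrow$(i), your block construction $x_i=2^{-n}y_n$ for $i\in I_n=[2^n,2^{n+1})$, with $(y_n)$ a $c_0$-basic sequence, correctly yields $\mathcal{C}({\bf x})=\exh(\varphi)$ for $\varphi(A)=\sup_n |A\cap I_n|/2^n$, which is the density ideal $\mathcal{Z}$, and $\mathcal{Z}$ is indeed not $F_\sigma$.

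One small caveat on that last step: the parenthetical justification you offer --- that $\fin(\varphi)\supsetneq\exh(\varphi)$ together with Theorem \ref{solecki} forces $\mathcal{Z}$ to be non-$F_\sigma$ --- is not quite a proof. Theorem \ref{solecki} as stated gives only an existential criterion (there exists \emph{some} lscsm $\psi$ with $\exh(\psi)=\fin(\psi)$); showing that one particular $\varphi$ has $\exh(\varphi)\neq\fin(\varphi)$ does not by itself rule that out. You are in effect invoking the classical fact that $\mathcal{Z}$ is not $F_\sigma$ (which is correct and well known, due to Solecki), and you should simply cite it as such rather than present the $\exh\neq\fin$ observation as the reason. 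With that cosmetic repair, the proof is complete.
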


When working in Banach spaces, Theorem \ref{Polish C-repre}
is strengthened as follows. 

\begin{teo}\label{C-repre}
 (Borodulin et al \cite{Borodulinetal2015}) Let $\ideal$ be an ideal on $\N$. The following are equivalent:
\begin{itemize}
\item[(i)] $\ideal=\exh(\varphi)$ for a nonpathological lscsm $\varphi$. 
\item[(ii)] $\ideal=\C({\bf x})$ for some sequence ${\bf x}=(x_n)_n$ in a Banach space.
\end{itemize}
\end{teo}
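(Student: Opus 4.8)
The plan is to prove the two implications separately, using the correspondence between a sequence $\mathbf{x}=(x_n)_n$ in a Banach space and its associated submeasure $\varphi_{\mathbf x}$ given by \eqref{fisubx}, together with Solecki's representation Theorem \ref{solecki} and the $\C$-representability characterization of analytic $P$-ideals (Theorem \ref{Polish C-repre}). The key reduction is: $\C(\mathbf x)$ is always a $P$-ideal, and when $\mathbf x$ does not generate an isomorphic copy of $c_0$ (equivalently, by Theorem \ref{DreLabu}(iii), when $\C(\mathbf x)=\B(\mathbf x)$) we have $\C(\mathbf x)=\exh(\varphi_{\mathbf x})=\fin(\varphi_{\mathbf x})$ by the $F_\sigma$ $P$-ideal clause of Theorem \ref{solecki}. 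So the crux is to match \emph{nonpathology} of the submeasure with the Banach-space side.

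For (ii)$\Rightarrow$(i): given $\ideal=\C(\mathbf x)$ in a Banach space $X$, I would first argue that $\varphi_{\mathbf x}$ is nonpathological. The point is that each bounded linear functional $f\in X^\ast$ with $\|f\|\le 1$ induces, via $\mu_f(A)=\sup\{|\sum_{n\in F}f(x_n)|: F\subseteq A \text{ finite}\}$ or more precisely via the total variation of the scalar sequence $(f(x_n))_n$, a measure dominated by $\varphi_{\mathbf x}$; by the Hahn–Banach theorem $\sup_{\|f\|\le 1}|\sum_{n\in F}f(x_n)| = \|\sum_{n\in F}x_n\|$ for each finite $F$, so taking the supremum over all such $f$ recovers $\varphi_{\mathbf x}$ on finite sets, hence everywhere by lower semicontinuity. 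Thus $\varphi_{\mathbf x}$ is a supremum of measures, i.e. nonpathological. Then $\exh(\varphi_{\mathbf x})$ is nonpathological and it remains to see $\C(\mathbf x)=\exh(\varphi_{\mathbf x})$: one inclusion is the general fact that unconditional convergence of $\sum_{n\in A}x_n$ forces the tails $\varphi_{\mathbf x}(A\setminus\{0,\dots,n\})\to 0$; the other uses completeness of $X$ — if the tails go to $0$ then the net of partial sums is Cauchy uniformly over rearrangements, so $\sum_{n\in A}x_n$ converges unconditionally.

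For (i)$\Rightarrow$(ii): given $\ideal=\exh(\varphi)$ with $\varphi=\sup\{\mu_j: j\in J\}$ nonpathological, I would realize $\ideal$ as $\C(\mathbf x)$ inside a suitable Banach space built from the dominated measures. The natural candidate is to take $X$ to be (a subspace of) $c_0(J)$ or $\ell_\infty(J)$, setting $x_n = (\mu_j(\{n\}))_{j\in J}$, so that $\|\sum_{n\in F}x_n\|_\infty = \sup_j \mu_j(F) = \varphi(F)$, giving $\varphi_{\mathbf x}=\varphi$. One must check the $x_n$ actually land in the chosen space and that $\C(\mathbf x)=\exh(\varphi_{\mathbf x})$; if $X$ is taken so that it contains no copy of $c_0$ this last equality is automatic from Theorem \ref{DreLabu}, but $\ell_\infty(J)$ does contain $c_0$, so some care is needed — here one can instead invoke that $\exh(\varphi)$ is always a $P$-ideal and use the $\C$-representability of analytic $P$-ideals directly, or pass to the closed span of the $x_n$ and verify it omits $c_0$ precisely because $\exh(\varphi)=\fin(\varphi)$ would fail otherwise. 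The main obstacle I anticipate is exactly this last point: ensuring in the (i)$\Rightarrow$(ii) direction that the concrete Banach space witnessing the representation does not accidentally enlarge $\C(\mathbf x)$ beyond $\exh(\varphi)$, which requires either a clean choice of space (a subspace of $c_0(J)$ when $J$ can be taken countable, using that nonpathological submeasures are suprema of \emph{countably} many extremal measures for the purposes of $\exh$) or an appeal to the already-established equivalence in Theorem \ref{Polish C-repre} combined with identifying the submeasure coming out of Solecki's theorem with a nonpathological one.
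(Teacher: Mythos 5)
Your overall route is the right one and matches the paper's: both directions go through the correspondence between a sequence $\mathbf{x}$ in a Banach space and the associated submeasure $\varphi_{\mathbf x}$, with the forward direction of (i)$\Rightarrow$(ii) using $x_n=(\mu_k(\{n\}))_k$ in $\ell_\infty$. However, two points in your write-up need attention.

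First, your concern about $c_0$ in (i)$\Rightarrow$(ii) is a red herring, and the appeal to Theorem \ref{DreLabu} and to the ``$F_\sigma$ $P$-ideal'' clause of Theorem \ref{solecki} is off target. The identity $\C(\mathbf x)=\exh(\varphi_{\mathbf x})$ holds for \emph{every} sequence in \emph{every} Banach space, with no hypothesis about copies of $c_0$: unconditional convergence of $\sum_{n\in A}x_n$ is, via the Cauchy criterion and completeness, exactly the statement that $\varphi_{\mathbf x}(A\setminus[0,N])\to 0$. What the $c_0$-free hypothesis in Theorem \ref{DreLabu} governs is the \emph{different} identity $\C(\mathbf x)=\B(\mathbf x)$, i.e.\ $\exh(\varphi_{\mathbf x})=\fin(\varphi_{\mathbf x})$, which is the $F_\sigma$ case. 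The ideal in the present theorem is a general analytic $P$-ideal and is typically only $F_{\sigma\delta}$, so there is no $F_\sigma$-ness to chase. Once you drop that worry, the direction closes immediately: choose countably many measures $\mu_k$ with $\varphi=\sup_k\mu_k$ (possible since $\varphi$ is determined by its values on the countably many finite sets and is lower semicontinuous), set $x_n=(\mu_k(\{n\}))_k\in\ell_\infty$, observe $\varphi_{\mathbf x}=\varphi$, and conclude $\C(\mathbf x)=\exh(\varphi)$. Your fallback ``appeal to Theorem \ref{Polish C-repre}'' doesn't help since that theorem says nothing about nonpathology.

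Second, in (ii)$\Rightarrow$(i) the Hahn--Banach argument as stated has a gap. The quantity $F\mapsto|\sum_{n\in F}f(x_n)|$ is not a measure, and the total-variation measure $\nu_f(A)=\sum_{n\in A}|f(x_n)|$ is only dominated by $2\varphi_{\mathbf x}$ in a general Banach space (split $F$ into $\{n:f(x_n)\ge 0\}$ and its complement), so ``$\sup$ over $\|f\|\le 1$'' does not immediately exhibit $\varphi_{\mathbf x}$ as a supremum of dominated measures. The clean fix is the one the paper uses: first embed in $\ell_\infty$ (Proposition \ref{linfinito}), then replace $x_n$ by $|x_n|$ (Lemma \ref{Banach10}) so that $x_n\ge 0$, and then the coordinate functionals $e_k^*$ give positive measures $\mu_k\le\varphi_{\mathbf x}$ with $\sup_k\mu_k=\varphi_{\mathbf x}$ on finite sets, hence everywhere. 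Alternatively, staying in the original space, you can fix a finite $F$, choose $G\subseteq F$ and a norming $f$ for $\sum_{n\in G}x_n$, and use the measure $\mu(\{n\})=\max(f(x_n),0)$ for $n\in G$ (and $0$ elsewhere); one checks $\mu\le\varphi_{\mathbf x}$ and $\mu(F)=\varphi_{\mathbf x}(F)$, which suffices. Either way, this is a step you need to spell out rather than invoke the Hahn--Banach norming equality directly.

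Finally, you did not actually need the observation that $\C(\mathbf x)$ is a $P$-ideal for this proof; it is true, but it is a consequence of, not a step toward, the equivalence you are proving.
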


The proof of the previous result  also provides a characterization of $\B$-representability on  Banach spaces, as we show below.   Since $l_\infty$ contains isometric copies of all separable Banach spaces,  we have the following (already used in \cite{Borodulinetal2015} for $\C({\bf x})$). 

\begin{prop}
\label{linfinito}
Let  ${\bf x}=(x_n)_n$ be a sequence in a Banach space $X$. There is ${\bf y}=(y_n)_n$ in $l_\infty$ such that 
$\B({\bf x})=\B({\bf y})$. 
\end{prop}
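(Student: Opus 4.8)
The plan is to use the standard isometric embedding of a separable Banach space into $\ell_\infty$ and to observe that both the notions of "perfectly bounded" and the ideal $\B(\cdot)$ are isometric invariants. First I would note that although $X$ itself need not be separable, only the closed subspace $X_0=\overline{\mathrm{span}}\{x_n:n\in\N\}$ generated by the sequence $\bf x$ is relevant for $\B({\bf x})$: for any finite $F\su\N$, the norm $\|\sum_{n\in F}x_n\|$ is computed inside $X_0$, so $\varphi_{\bf x}$ depends only on $X_0$ and the sequence. Since $X_0$ is separable, there is a standard linear isometric embedding $T:X_0\hookrightarrow \ell_\infty$ (for instance, fix a countable dense set $(z_j)_j$ in the unit ball of $X_0^*$ and send $z\mapsto (z_j(z))_j$; by Hahn–Banach this is isometric). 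I would then set $y_n=T(x_n)$ and ${\bf y}=(y_n)_n$.

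The key step is then purely formal: because $T$ is a linear isometry, for every finite $F\su\N$ we have
\[
\Bigl\|\sum_{n\in F}y_n\Bigr\|_{\ell_\infty}
=\Bigl\|T\Bigl(\sum_{n\in F}x_n\Bigr)\Bigr\|_{\ell_\infty}
=\Bigl\|\sum_{n\in F}x_n\Bigr\|_{X}.
\]
Hence $\varphi_{\bf y}=\varphi_{\bf x}$ as lower semicontinuous submeasures, and in particular a set $A$ has $\sum_{n\in A}x_n$ perfectly bounded (with constant $k$) if and only if $\sum_{n\in A}y_n$ is perfectly bounded (with the same constant $k$). By definition of $\B(\cdot)$ this gives $\B({\bf x})=\B({\bf y})$, completing the proof.

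There is essentially no obstacle here; the only point requiring a little care is the reduction to the separable case, i.e. making explicit that $\varphi_{\bf x}$ — and therefore $\B({\bf x})$ — is determined by the closed linear span of the $x_n$'s, so that separability is not a genuine hypothesis. The existence of the isometric embedding of a separable Banach space into $\ell_\infty$ is classical and is exactly the fact already invoked in \cite{Borodulinetal2015} for $\C({\bf x})$, as the statement of the proposition recalls; I would simply cite it. Everything else is a one-line computation with the isometry.
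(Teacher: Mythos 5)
Your proposal is correct and is exactly the argument the paper has in mind: the paper simply remarks that $\ell_\infty$ contains isometric copies of all separable Banach spaces and leaves the (routine) verification that an isometric embedding of the closed span $\overline{\mathrm{span}}\{x_n\}$ preserves $\varphi_{\bf x}$, hence $\B({\bf x})$, to the reader. Your write-up spells out those details but does not deviate in approach.
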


From now on, we only work with $l_\infty$ (or $c_0$),  this assumption implies that  $\varphi_{\bf x}$ has  the  important extra feature of being nonpathological. 

It is convenient to have that the vectors  $x_n\in l_\infty$ used in the representation of an ideal are of non negative terms. 
The following result was proved in \cite{Borodulinetal2015} for $\C({\bf x})$, a similar argument works also for $\B({\bf x})$.
 
\begin{lema}
\label{Banach10}
Let ${\bf x}=(x_n)_n$ be a sequence in $l_\infty$. Let  ${\bf x'}=(x'_n)$, where  $x'_n(k)=|x_n(k)|$ for each  $n,k\in \N$, then  $\mathcal{B}({\bf x})=\mathcal{B}({\bf x'})$.
\end{lema}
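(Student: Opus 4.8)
The plan is to show the two inclusions $\mathcal{B}(\mathbf{x})\subseteq\mathcal{B}(\mathbf{x'})$ and $\mathcal{B}(\mathbf{x'})\subseteq\mathcal{B}(\mathbf{x})$ by comparing the associated submeasures $\varphi_{\mathbf{x}}$ and $\varphi_{\mathbf{x'}}$ pointwise, since by Lemma \ref{B-F-sigma} we have $\mathcal{B}(\mathbf{x})=\fin(\varphi_{\mathbf{x}})$ and $\mathcal{B}(\mathbf{x'})=\fin(\varphi_{\mathbf{x'}})$. In fact I expect to prove the stronger statement that $\varphi_{\mathbf{x}}=\varphi_{\mathbf{x'}}$; once that is established the lemma is immediate. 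The whole argument reduces to a statement about a single finite sum: for any finite nonempty $F\subseteq\N$, $\left\|\sum_{n\in F}x_n\right\|_\infty=\left\|\sum_{n\in F}x'_n\right\|_\infty$ is \emph{not} true in general, so instead I will compare suprema over all finite subsets.

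The key point is the following. Fix a finite nonempty $F\subseteq\N$ and a coordinate $k\in\N$. Split $F$ into $F^+_k=\{n\in F: x_n(k)\geq 0\}$ and $F^-_k=\{n\in F: x_n(k)<0\}$. Then one of $\left|\sum_{n\in F^+_k}x_n(k)\right|$ and $\left|\sum_{n\in F^-_k}x_n(k)\right|$ is at least $\tfrac12\sum_{n\in F}|x_n(k)|=\tfrac12\sum_{n\in F}x'_n(k)$; wait — more cleanly, $\sum_{n\in F}x'_n(k)=\sum_{n\in F^+_k}x_n(k)-\sum_{n\in F^-_k}x_n(k)$, so at least one of the two subsums $G\in\{F^+_k,F^-_k\}$ satisfies $\left|\sum_{n\in G}x_n(k)\right|\geq\tfrac12\sum_{n\in F}x'_n(k)$. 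Since $\varphi_{\mathbf{x}}(F)\geq\left\|\sum_{n\in G}x_n\right\|_\infty\geq\left|\sum_{n\in G}x_n(k)\right|$, taking the supremum over $k$ and over finite $F\subseteq A$ yields $\varphi_{\mathbf{x'}}(A)\leq 2\,\varphi_{\mathbf{x}}(A)$ for every $A$. Conversely the triangle inequality in each coordinate gives $\left|\sum_{n\in F}x_n(k)\right|\leq\sum_{n\in F}|x_n(k)|=\sum_{n\in F}x'_n(k)$, hence $\varphi_{\mathbf{x}}(A)\leq\varphi_{\mathbf{x'}}(A)$. The two bounds together give $\varphi_{\mathbf{x}}\leq\varphi_{\mathbf{x'}}\leq 2\,\varphi_{\mathbf{x}}$, and therefore $\fin(\varphi_{\mathbf{x}})=\fin(\varphi_{\mathbf{x'}})$, i.e. $\mathcal{B}(\mathbf{x})=\mathcal{B}(\mathbf{x'})$.

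There is essentially no hard part here; the only thing to be careful about is that the factor $2$ appears because passing from $\mathbf{x}$ to $\mathbf{x'}$ can genuinely double the submeasure on a fixed set (take $x_0=(1)$, $x_1=(-1)$ in the one-dimensional case), so one cannot hope for equality of the submeasures — but equality of the \emph{ideals} $\fin$ is all that is needed and is insensitive to a bounded multiplicative constant. If one prefers a cleaner statement, one can instead observe directly that a set $A$ is in $\mathcal{B}(\mathbf{x})$ iff $\{\sum_{n\in F}x_n(k): F\subseteq A\text{ finite}, k\in\N\}$ is bounded iff $\{\sum_{n\in F}x'_n(k): F\subseteq A\text{ finite}, k\in\N\}$ is bounded, using the coordinatewise sign-splitting above; this avoids even mentioning the constant. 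Either way the proof is three lines once the sign-splitting observation is written down.
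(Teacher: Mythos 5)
Your proof is correct, and it fills in an argument the paper itself omits: the paper simply asserts that ``a similar argument works also for $\B({\bf x})$'' and cites \cite{Borodulinetal2015}, where the analogous statement is proved for $\C({\bf x})$. Your coordinatewise sign-splitting is exactly the right tool, and the quantitative conclusion $\varphi_{\bf x}\leq\varphi_{\bf x'}\leq 2\,\varphi_{\bf x}$ is a clean way to record it (this immediately gives $\fin(\varphi_{\bf x})=\fin(\varphi_{\bf x'})$). One microscopic point worth making explicit when you invoke $\varphi_{\bf x}(F)\geq \bigl\|\sum_{n\in G}x_n\bigr\|_\infty$ is that the chosen $G\in\{F^+_k,F^-_k\}$ must be nonempty; this is automatic whenever $\sum_{n\in F}x'_n(k)>0$ (the larger of the two nonnegative sums $\sum_{n\in F^+_k}x_n(k)$ and $-\sum_{n\in F^-_k}x_n(k)$ is strictly positive, forcing the corresponding index set to be nonempty), and the inequality is trivial when $\sum_{n\in F}x'_n(k)=0$. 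Your closing remark about the factor $2$ being unavoidable (e.g.\ $x_0=(1)$, $x_1=(-1)$) is a nice sanity check and correctly explains why one should not expect $\varphi_{\bf x}=\varphi_{\bf x'}$ on the nose.
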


\bigskip

Now we recall why the lscsm $\varphi_{\bf x}$ given by \eqref{fisubx} is nonpathological when working on  $l_\infty$. 
Let ${\bf{x}}=(x_n)_n$ be a sequence in $l_\infty$ and assume that   $x_n(k)\geq 0$ for all $n,k\in \N$. Define  a sequence of measures as follows.  For $A\subseteq \N$ and  $k\in \N$, let 
$$
\mu_k(A)=\sum_{n\in A}x_n(k).
$$
Let $ \psi=\sup_k \mu_k$, thus $\psi$ is a nonpathological lscsm.  Notice that $\psi(\{n\})= \|x_n\|_\infty$ for all $n$ and, more generally,    for each $F\su\N$ finite we have 
$$
\psi(F)=\left \| \sum_{n\in F}x_n \right \|_\infty.
$$
Since $\psi$ is monotone,  $\psi(F)=\varphi_{\bf{x}} (F)$ for every finite set $F$.  Therefore $\psi=\varphi_{\bf{x}}$.

\begin{teo} Let ${\bf{x}}=(x_n)_n$  be a sequence in $l_\infty$ with  $x_n\geq 0$ for all $n$. Then  $\varphi_{\bf{x}}$ is a nonpathological and 
\begin{itemize}
\item[(i)] $\mathcal{C}({\bf{x}})=\exh(\varphi_{\bf{x}})$. 
 
\item[(ii)] $\mathcal{B}({\bf{x}})=\fin(\varphi_{\bf{x}})$. 
\end{itemize}
 \end{teo}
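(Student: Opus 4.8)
The statement packages three assertions: nonpathology of $\varphi_{\bf x}$, the identity $\mathcal{C}({\bf x})=\exh(\varphi_{\bf x})$, and the identity $\mathcal{B}({\bf x})=\fin(\varphi_{\bf x})$. The plan is to observe that the first and the third are immediate from the discussion preceding the statement, and to reduce the middle one to the classical unconditional Cauchy criterion for series in a Banach space.

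Nonpathology is exactly what the paragraph before the statement establishes: with $\mu_k(A)=\sum_{n\in A}x_n(k)$ one gets $\sigma$-additive non-negative measures satisfying $\mu_k(F)\le\|\sum_{n\in F}x_n\|_\infty=\varphi_{\bf x}(F)$ for every finite $F$, hence $\mu_k\le\varphi_{\bf x}$; and since $\sup_k\mu_k(F)=\|\sum_{n\in F}x_n\|_\infty=\varphi_{\bf x}(F)$ on finite sets while both sides are lower semicontinuous, $\varphi_{\bf x}=\sup_k\mu_k$ is a supremum of measures. Assertion (ii) is obtained by unwinding definitions exactly as in Lemma~\ref{B-F-sigma} (no separability of $l_\infty$ is needed for this): $A\in\mathcal{B}({\bf x})$ says precisely that $\sup\{\|\sum_{n\in F}x_n\|_\infty:F\subseteq A\ \text{finite}\}$, which equals $\varphi_{\bf x}(A)$ by \eqref{fisubx}, is finite, i.e.\ that $A\in\fin(\varphi_{\bf x})$.

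For (i) I will use the fact that, for a sequence $(y_n)$ in a Banach space, $\sum y_n$ converges unconditionally if and only if for every $\varepsilon>0$ there is a finite $F_0$ such that $\|\sum_{n\in F}y_n\|<\varepsilon$ for every finite $F$ disjoint from $F_0$ (the ``unconditionally Cauchy'' condition; completeness of the space is what upgrades this Cauchy condition to convergence of all rearrangements). Applied to the subseries indexed by $A$ this gives both inclusions. If $A\in\exh(\varphi_{\bf x})$, then given $\varepsilon>0$ choose $m$ with $\varphi_{\bf x}(A\setminus\{0,1,\dots,m\})<\varepsilon$; for every finite $F\subseteq A$ disjoint from $\{0,1,\dots,m\}$ we have $\|\sum_{n\in F}x_n\|_\infty\le\varphi_{\bf x}(A\setminus\{0,1,\dots,m\})<\varepsilon$, so the criterion holds with $F_0=A\cap\{0,1,\dots,m\}$ and $A\in\mathcal{C}({\bf x})$. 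Conversely, if $A\in\mathcal{C}({\bf x})$, then given $\varepsilon>0$ take the finite $F_0\subseteq A$ provided by the criterion and set $m=\max F_0$; every finite $F\subseteq A\setminus\{0,1,\dots,m\}$ is disjoint from $F_0$, hence $\|\sum_{n\in F}x_n\|_\infty<\varepsilon$, and taking the supremum over all such $F$ gives $\varphi_{\bf x}(A\setminus\{0,1,\dots,m\})\le\varepsilon$. Since $m\mapsto\varphi_{\bf x}(A\setminus\{0,1,\dots,m\})$ is non-increasing, this yields $\lim_m\varphi_{\bf x}(A\setminus\{0,1,\dots,m\})=0$, i.e.\ $A\in\exh(\varphi_{\bf x})$.

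Once the criterion is in hand the rest is bookkeeping, so I do not expect a real obstacle; the only delicate point is the precise statement of the unconditional Cauchy criterion --- that it characterizes unconditional convergence in a \emph{complete} space, and that it is phrased with an arbitrary finite $F_0$, which one matches with the initial-segment tails $A\setminus\{0,1,\dots,m\}$ appearing in $\exh$ using monotonicity of that tail in $m$. If a self-contained treatment is preferred, the criterion has a short standard proof: from a failure of the condition extract pairwise disjoint finite blocks each of norm bounded below, then build a rearrangement placing these blocks consecutively, contradicting convergence of that rearrangement.
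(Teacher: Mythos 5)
Your proposal is correct, and for two of the three assertions it is exactly what the paper does: the nonpathology claim is the identity $\varphi_{\bf x}=\sup_k\mu_k$ established in the paragraph that precedes the theorem, and (ii) is literally Lemma~\ref{B-F-sigma}, both of which the paper merely invokes. The only real difference is in part (i): the paper cites the proof of Theorem~4.4 of Borodulin--Nadzieja, Farkas and Plebanek, while you supply a self-contained argument from the unconditional Cauchy criterion. The mathematical content is the same --- that criterion is what drives the cited result --- but your explicit version has two minor advantages: it shows where completeness of $l_\infty$ enters (upgrading unconditionally Cauchy to unconditionally convergent), and it makes visible that the identity $\mathcal{C}({\bf x})=\exh(\varphi_{\bf x})$ does not actually use the positivity hypothesis $x_n\ge 0$; positivity is needed only to write $\varphi_{\bf x}$ as a supremum of the coordinate measures $\mu_k$, i.e.\ for the nonpathology statement. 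Your translation between the arbitrary finite excluded set $F_0$ in the Cauchy criterion and the initial-segment tails $A\setminus\{0,\dots,m\}$ via monotonicity of $\varphi_{\bf x}$ is the right bit of bookkeeping and is carried out correctly.
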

\proof (i)  follows from the proof of Theorem 4.4 of \cite{Borodulinetal2015}.
(ii) Follows from Lemma \ref{B-F-sigma}.
\endproof

Conversely, given a nonpathological lscsm $\varphi$, say $\varphi=\sup_k \mu_k$, where  $\mu_k$ is a measure for each $k$, we associate to it a sequence ${\bf{x}}_\varphi=(x_n)_n $ of elements of $l_\infty$  as follows: Given  $n\in \N$, let
$$
x_n=(\mu_0(\{n\}),\ldots ,\mu_k(\{n\}),\ldots ).
$$
Notice that $\| x_n\|_\infty= \varphi(\{n\})$ for all $n$. 
For each $F\in \fin$, we have
$$
\varphi(F)=\sup\{\mu_k(F): \;k\in \N\}=\sup\{\sum_{n\in F}\mu_k(\{n\}): \;k\in \N\}=\sup\{\sum_{n\in F}x_n(k): \;k\in \N\}=\left \| \sum_{n\in F}x_n \right \|_\infty.
$$
In other words, $\varphi=\varphi_{{\bf x}_\varphi}$. 
Part (i) of the following result is  \cite[Theorem 4.4]{Borodulinetal2015} and (ii) follows from the above discussion. 

\begin{teo} Let $\varphi$ be a nonpathological lscsm. Then 
\begin{itemize}
\item[(i)] $\mathcal{C}({\bf{x}}_\varphi)=\exh(\varphi)$. 
 
\item[(ii)] $\mathcal{B}({\bf{x}}_\varphi)=\fin(\varphi)$. 
\end{itemize}
 \end{teo}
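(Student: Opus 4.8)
The plan is to reduce the whole statement to the identity $\varphi=\varphi_{{\bf x}_\varphi}$, which is exactly what the paragraph preceding the theorem establishes: for every finite $F$ one has $\bigl\|\sum_{n\in F}x_n\bigr\|_\infty=\sup_k\sum_{n\in F}\mu_k(\{n\})=\varphi(F)$, and since both $\varphi$ and $\varphi_{{\bf x}_\varphi}$ are lower semicontinuous submeasures agreeing on finite sets, they agree on all of $\mathcal{P}(\N)$. Granting this, part (ii) is immediate: by Lemma \ref{B-F-sigma}, $\mathcal{B}({\bf x}_\varphi)=\fin(\varphi_{{\bf x}_\varphi})=\fin(\varphi)$. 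Observe that ${\bf x}_\varphi$ already has non-negative coordinates, so Lemma \ref{Banach10} is not needed here; the only input is $\varphi=\varphi_{{\bf x}_\varphi}$ together with Lemma \ref{B-F-sigma}.

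For part (i) the shortest route is to cite \cite[Theorem 4.4]{Borodulinetal2015}, whose proof shows precisely that $\mathcal{C}({\bf x})=\exh(\varphi_{\bf x})$ for a sequence ${\bf x}$ in $l_\infty$ with non-negative coordinates; plugging in ${\bf x}={\bf x}_\varphi$ and using $\varphi=\varphi_{{\bf x}_\varphi}$ yields $\mathcal{C}({\bf x}_\varphi)=\exh(\varphi)$. If a self-contained argument is preferred, I would invoke the Cauchy criterion for unconditional convergence in a Banach space: $\sum_n y_n$ converges unconditionally iff for every $\varepsilon>0$ there is $m$ such that $\bigl\|\sum_{n\in F}y_n\bigr\|<\varepsilon$ for every finite $F$ with $\min F>m$ (any finite "forbidden set" can be enlarged to an initial segment, which is why forbidding initial segments suffices). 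Applying this with $y_n=x_n$ for $n\in A$ and $y_n=0$ otherwise, and using that $l_\infty$ is complete, one gets that $A\in\mathcal{C}({\bf x}_\varphi)$ iff $\sup\{\|\sum_{n\in F}x_n\|_\infty: F\subseteq A\setminus\{0,\dots,m\}\ \text{finite}\}\to 0$ as $m\to\infty$, i.e. iff $\varphi_{{\bf x}_\varphi}(A\setminus\{0,\dots,m\})\to 0$, i.e. iff $A\in\exh(\varphi_{{\bf x}_\varphi})=\exh(\varphi)$.

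The only genuinely non-routine ingredient is the equivalence, in the definition of $\mathcal{C}({\bf x})$, between "every rearrangement of $\sum_{n\in A}x_n$ converges" and the subseries/Cauchy condition used above (and the fact that, when all rearrangements converge, they share a common limit). This is classical Banach space theory, so I would cite it rather than reprove it; once it is in hand, the remainder is bookkeeping with $\varepsilon$'s, the monotonicity of $\varphi_{{\bf x}_\varphi}$, and the already-established identity $\varphi=\varphi_{{\bf x}_\varphi}$.
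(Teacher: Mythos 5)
Your proposal is correct and takes essentially the same approach as the paper: part (ii) reduces, via the identity $\varphi=\varphi_{{\bf x}_\varphi}$ established in the preceding paragraph, to Lemma \ref{B-F-sigma}, and part (i) is cited from \cite[Theorem 4.4]{Borodulinetal2015}. The extra sketch of a self-contained Cauchy-criterion argument for (i) is sound (and your observation that ${\bf x}_\varphi$ already has non-negative coordinates, so Lemma \ref{Banach10} is unnecessary, is a fair remark), but it is additional rather than a different route.
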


The following theorem is  analogous to Theorem \ref{C-repre} but for  $\B$-representability. 

\begin{teo}
\label{TeoremaB}
An ideal $\ideal$  is $\mathcal{B}$-representable in a Banach space if, and only if, there is a nonpathological lscsm  $\varphi$ such that $\ideal=\fin(\varphi)$.
\end{teo}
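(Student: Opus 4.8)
The plan is to prove Theorem \ref{TeoremaB} by chaining together the representation results already assembled in this section, reducing the general Banach space case to the concrete case of $l_\infty$. The statement has two directions, and both become almost immediate once we recall that $\varphi_{\bf x}$ is nonpathological precisely when we work in $l_\infty$ (equivalently, in $c_0$ or any space into which the relevant sequence embeds isometrically), and that every nonpathological lscsm arises as $\varphi_{{\bf x}_\varphi}$ for an explicit sequence ${\bf x}_\varphi$ in $l_\infty$.

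For the forward direction, suppose $\ideal$ is $\mathcal{B}$-representable in a Banach space $X$, say $\ideal=\mathcal{B}({\bf x})$ for ${\bf x}=(x_n)_n$ in $X$. First I would apply Proposition \ref{linfinito} to replace $X$ by $l_\infty$: there is ${\bf y}=(y_n)_n$ in $l_\infty$ with $\mathcal{B}({\bf x})=\mathcal{B}({\bf y})$. Next, by Lemma \ref{Banach10}, I may replace ${\bf y}$ by the coordinatewise-absolute-value sequence ${\bf y'}$, so that without loss of generality $y_n\geq 0$ for all $n$; this does not change $\mathcal{B}$. Now the discussion preceding the theorem (building the measures $\mu_k(A)=\sum_{n\in A}y_n(k)$ and $\psi=\sup_k\mu_k$) shows that $\psi=\varphi_{\bf y}$ is a nonpathological lscsm, and by Lemma \ref{B-F-sigma} we have $\mathcal{B}({\bf y})=\fin(\varphi_{\bf y})$. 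Hence $\ideal=\fin(\varphi_{\bf y})$ with $\varphi_{\bf y}$ nonpathological, as required.

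For the reverse direction, suppose $\ideal=\fin(\varphi)$ for some nonpathological lscsm $\varphi$. Write $\varphi=\sup_k\mu_k$ with each $\mu_k$ a measure, and form the sequence ${\bf x}_\varphi=(x_n)_n$ in $l_\infty$ given by $x_n=(\mu_0(\{n\}),\mu_1(\{n\}),\dots)$, exactly as in the paragraph preceding the theorem. That paragraph verifies $\|x_n\|_\infty=\varphi(\{n\})<\infty$ so that each $x_n$ genuinely lies in $l_\infty$, and that $\varphi(F)=\|\sum_{n\in F}x_n\|_\infty$ for every finite $F$, i.e. $\varphi=\varphi_{{\bf x}_\varphi}$. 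Invoking Lemma \ref{B-F-sigma} once more, $\mathcal{B}({\bf x}_\varphi)=\fin(\varphi_{{\bf x}_\varphi})=\fin(\varphi)=\ideal$, so $\ideal$ is $\mathcal{B}$-representable in the Banach space $l_\infty$.

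Honestly, there is no serious obstacle here: every component has been isolated in a prior lemma, and the theorem is essentially a bookkeeping synthesis, the $\mathcal{B}$-analogue of Theorem \ref{C-repre}. The only point requiring a little care is making sure the normalization steps in the forward direction (passing to $l_\infty$ via Proposition \ref{linfinito}, then to nonnegative coordinates via Lemma \ref{Banach10}) genuinely preserve the ideal $\mathcal{B}$ and not merely the exhaustive ideal, but since Lemma \ref{B-F-sigma} is stated directly for $\mathcal{B}$ and $\fin$, this is already in hand. I would therefore present the argument tersely, citing the three lemmas and the two displayed computations, rather than reproving anything.
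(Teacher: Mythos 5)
Your proposal is correct and follows essentially the same path as the paper's own proof: reduce to $l_\infty$ via Proposition \ref{linfinito}, normalize to nonnegative coordinates via Lemma \ref{Banach10}, and then invoke the identifications $\mathcal{B}({\bf x})=\fin(\varphi_{\bf x})$ and $\varphi=\varphi_{{\bf x}_\varphi}$ established just before the theorem, with Lemma \ref{B-F-sigma} closing both directions. No discrepancy worth noting.
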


\begin{proof}
Suppose $\ideal$ is a $\mathcal{B}$-representable ideal in a Banach space. By Lemma \ref{linfinito}, we can assume that $\ideal$ is  $\mathcal{B}$-representable in $l_\infty$. Let ${\bf{x}}=(x_n)_n$ be a sequence in $l_\infty$ such that  $\ideal=\mathcal{B}({\bf{x}})$. By Lemma \ref{Banach10} we assume that $x_n\geq 0$ for all $n$.  Now the result follows from the previous considerations where we have shown that $\B({\bf x})=\fin(\varphi_{\bf x})$. Conversely, if $\varphi$ is nonpathological, we have shown above that $\fin(\varphi)=\B({\bf x}_\varphi)$. 
\end{proof}

To end this section, we present an example of an ideal which is  $\mathcal{B}$-representable in $c_0$ and is not a $P$-ideal, in particular, is not $\mathcal C$-representable in any Polish group.

\begin{ex}
\label{ejem-B-no-C}
{\em 
$\fin \times \{\varnothing \}$ is $\mathcal{B}$-representable on $c_0$. 
Recall that $\fin\times\{\emptyset\}$ is defined by  letting $A\in \fin \times \{\varnothing \}$ iff there is $k$ such that $A\subseteq B_0\cup\cdots\cup B_k$, where  $(B_n)_n$ is a partition of  $\N$ into infinitely many infinite sets. It is well known, and easy to verify, that $\fin \times \{\varnothing \}$ is not a $P$-ideal. Let  ${\bf{x}}=(x_n)_n$ be given by $x_n=me_n$, for $n\in B_m$, where  $(e_n)_n$ is the usual base for $c_0$.  It is easy to verify that  $\fin \times \{\varnothing \}=\mathcal{B}({\bf{x}})$. 
}
\end{ex}

\subsection{Tallness of  $\B({\bf x})$}
\label{tallnessB}
It is easy to check that  $\C({\bf x})$ is tall iff $\|x_n\|\to 0$. We show that the tallness  of $\B({\bf x})$ is related to the weak topology. 
A classical characterization of perfect boundedness  is as follows (see, for instance, \cite[Lemma 2.4.6]{albiac-kalton}). 
 
 \begin{prop}
 \label{wuc}
 A series $\sum_nx_n$ in a Banach space is perfectly bounded iff $\sum_n |x^*(x_n)|<\infty$ for all $x^*\in X^*$. 
\end{prop}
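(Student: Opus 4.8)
The plan is to prove the two implications separately: the forward direction by a direct sign‑splitting argument, and the converse by the Uniform Boundedness Principle applied on the dual space. Throughout I would write $s_F=\sum_{n\in F}x_n\in X$ for $F\in\fin$, so that ``$\sum_n x_n$ is perfectly bounded'' means precisely $\sup_{F\in\fin}\|s_F\|<\infty$.

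For the forward direction, fix $k>0$ with $\|s_F\|\le k$ for all $F\in\fin$ and fix $x^*\in X^*$. Given an arbitrary finite $F$, split it as $F=F^+\cup F^-$ with $F^+=\{n\in F:x^*(x_n)\ge 0\}$ and $F^-=F\setminus F^+$; then
\[
\sum_{n\in F}|x^*(x_n)|=x^*(s_{F^+})-x^*(s_{F^-})\le \|x^*\|\bigl(\|s_{F^+}\|+\|s_{F^-}\|\bigr)\le 2k\|x^*\|.
\]
Since this bound does not depend on $F$, we conclude $\sum_n|x^*(x_n)|\le 2k\|x^*\|<\infty$.

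For the converse, assume $\sum_n|x^*(x_n)|<\infty$ for every $x^*\in X^*$. I would regard each $s_F$ as an element of the bidual $X^{**}$ via the canonical embedding, recalling that $\|s_F\|_{X^{**}}=\|s_F\|_X$ by Hahn--Banach. For a fixed $x^*\in X^*$,
\[
\sup_{F\in\fin}\bigl|\langle s_F,x^*\rangle\bigr|=\sup_{F\in\fin}\Bigl|\sum_{n\in F}x^*(x_n)\Bigr|\le\sum_n|x^*(x_n)|<\infty ,
\]
so the family $\{s_F:F\in\fin\}\subseteq X^{**}$ is pointwise bounded on the Banach space $X^*$. By the Uniform Boundedness Principle there is $k$ with $\|s_F\|_{X^{**}}\le k$, i.e.\ $\|s_F\|_X\le k$, for all $F\in\fin$; hence $\sum_n x_n$ is perfectly bounded.

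The only point that requires care — and hence the ``main obstacle'', though it is a mild one — is setting up Banach--Steinhaus correctly: it must be invoked with the complete space $X^*$ as its domain and with the evaluations $x^*\mapsto\langle s_F,x^*\rangle$ as the operators, whose norms coincide with $\|s_F\|$; no completeness, separability, or absence of a copy of $c_0$ in $X$ itself is needed. (This is exactly \cite[Lemma 2.4.6]{albiac-kalton}, so in the paper one may simply cite it, but the argument above is self-contained.)
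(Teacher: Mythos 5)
Your proof is correct; in fact the paper does not prove this proposition at all but simply cites \cite[Lemma 2.4.6]{albiac-kalton}, and your sign-splitting argument for the forward direction together with the Banach--Steinhaus argument on $X^*$ for the converse is precisely the standard argument found there. Both halves are complete and the one delicate point --- invoking uniform boundedness with the complete space $X^*$ as domain so that $\|s_F\|_{X^{**}}=\|s_F\|_X$ --- is handled correctly.
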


From this we get the following. 

\begin{prop}
\label{tall-wn}
Let ${\bf{x}}=(x_n)_n$ be a sequence in a Banach space $X$. If $\mathcal{B}({\bf x})$ is tall,  then  ${\bf x}=(x_n)_n$ is weakly null.
\end{prop}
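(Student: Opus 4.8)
The plan is to prove the contrapositive, or rather to argue directly by contradiction from the failure of weak nullity. Suppose $\mathcal{B}({\bf x})$ is tall but $(x_n)_n$ is not weakly null. Then there is a functional $x^*\in X^*$, a real $\varepsilon>0$, and an infinite set $A\subseteq\mathbb{N}$ such that $|x^*(x_n)|\geq\varepsilon$ for all $n\in A$. By passing to a further infinite subset of $A$ we may assume that $x^*(x_n)$ has constant sign on $A$, say $x^*(x_n)\geq\varepsilon$ for all $n\in A$ (the case of negative sign is symmetric, replacing $x^*$ by $-x^*$).

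Now I would invoke tallness of $\mathcal{B}({\bf x})$ applied to the infinite set $A$: there is an infinite $B\subseteq A$ with $B\in\mathcal{B}({\bf x})$, i.e. $\sum_{n\in B}x_n$ is perfectly bounded. By Proposition \ref{wuc}, perfect boundedness of $\sum_{n\in B}x_n$ implies $\sum_{n\in B}|x^*(x_n)|<\infty$. But $x^*(x_n)\geq\varepsilon>0$ for every $n\in B\subseteq A$, and $B$ is infinite, so $\sum_{n\in B}|x^*(x_n)|\geq\sum_{n\in B}\varepsilon=\infty$, a contradiction. Hence $(x_n)_n$ must be weakly null.

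There is essentially no obstacle here; the only point requiring a little care is the reduction to a constant-sign subsequence, which is immediate since $\{n\in A : x^*(x_n)\geq\varepsilon\}$ and $\{n\in A : x^*(x_n)\leq-\varepsilon\}$ partition $A$ (up to the points where $|x^*(x_n)|<\varepsilon$, of which there are none by choice of $A$), so one of the two is infinite. The heart of the argument is simply Proposition \ref{wuc} together with the observation that a series of terms bounded away from zero cannot be absolutely convergent. I would also remark, as a closing comment, that the converse fails in general: weak nullity of $(x_n)_n$ does not force $\mathcal{B}({\bf x})$ to be tall, but in the special case $X=c_0$ one does get an equivalence, which is consistent with the $c_0$-type results of the previous section and Theorem \ref{c0tall}.
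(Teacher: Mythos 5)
Your proof is correct and takes essentially the same route as the paper: assume tallness and non-weak-nullity, extract an infinite $A$ on which $x^*(x_n)$ is bounded away from zero, apply tallness to get $B\subseteq A$ in $\mathcal{B}({\bf x})$, and contradict Proposition \ref{wuc}. The sign-normalization step is actually unnecessary, since Proposition \ref{wuc} already concerns $\sum_n|x^*(x_n)|$ and the $|x^*(x_n)|\geq\varepsilon$ bound on $A$ suffices directly, but this does not affect the validity of the argument.
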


\begin{proof}
Suppose $\B({\bf x})$ is tall and $(x_n)_n$ is not weakly null. Then there is $A\su \N$ infinite  and $x^*\in X^*$ such that $\inf_{n\in A}x^*(x_n)>0$. Let $B\su A$ infinite such that $\sum_{n\in B}x_n$ is perfectly bounded. This contradicts proposition \ref{wuc}.
\end{proof}

Thus, for a sequence ${\bf x}=(x_n)_n$, we have the following implications:
\[
(x_n)_n \; \mbox{is $\|\cdot \|$-null} \;\Rightarrow \B({\bf x})\; \mbox{is tall}\;\Rightarrow (x_n)_n\; \mbox{is weakly null}.
\]
This implications are, in general, strict. However, for $c_0$ the last one is an equivalence, as we show next.  

The following result  was originally proved using the classical Bessaga-Pelczy\'nski's selection theorem and it was the motivation for Theorem \ref{c0tall}.  

\begin{teo}
\label{c0-tall}
Let ${\bf x}=(x_n)_n$ be a sequence in $c_0$. Then $\mathcal{R}\leq_K \B({\bf x} )$ iff $(x_n)_n$ is weakly null.
\end{teo}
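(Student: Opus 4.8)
\textbf{Proof proposal for Theorem \ref{c0-tall}.}

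The plan is to derive this from Theorem \ref{c0tall} together with the translation between sequences in $c_0$ and sequences of measures of type $c_0$. First I would make the dictionary explicit: given ${\bf x}=(x_n)_n$ in $c_0$, by Lemma \ref{Banach10} we may assume $x_n\geq 0$ for all $n$ (since $\|\cdot\|_\infty$, the relevant functionals, and weak nullity are all unaffected by replacing coordinates with their absolute values in $c_0$). Then set $\mu_k(\{n\})=x_n(k)$, so that $\varphi_{\bf x}=\sup_k\mu_k$ and $\B({\bf x})=\fin(\varphi_{\bf x})$ by the results just proved. The key observation is that $(\mu_k)_k$ is of type $c_0$ in the sense of Definition \ref{type-c0}: $\lim_k\mu_k(\{n\})=\lim_k x_n(k)=0$ precisely because each $x_n\in c_0$. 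So Theorem \ref{c0tall} applies verbatim to $\varphi_{\bf x}$.

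Next I would identify condition (iii) of Theorem \ref{c0tall} with weak nullity of $(x_n)_n$. The condition $\sup_n\varphi_{\bf x}(\{n\})<\infty$ is automatic here since $\varphi_{\bf x}(\{n\})=\|x_n\|_\infty$ and $(x_n)_n$, being a sequence in $c_0$ with (as we may assume after a harmless normalization) bounded norms — indeed any weakly null sequence is norm-bounded, and if $(x_n)_n$ is not norm-bounded then it is certainly not weakly null and also $\B({\bf x})$ is not tall, since an infinite set on which $\|x_n\|_\infty\to\infty$ witnesses failure of tallness; so both sides of the equivalence fail and we may assume $\sup_n\|x_n\|_\infty<\infty$. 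The remaining part of (iii), namely $\lim_n\mu_k(\{n\})=0$ for all $k$, says $\lim_n x_n(k)=0$ for every coordinate $k$; combined with norm-boundedness in $c_0$ this is exactly the statement that $(x_n)_n$ is weakly null, because the dual of $c_0$ is $\ell_1$ and weak convergence to $0$ of a bounded sequence in $c_0$ is equivalent to coordinatewise convergence to $0$ (test against $e_k^*\in\ell_1$ for the forward direction; for the converse, approximate an arbitrary $\xi\in\ell_1$ by a finitely supported functional and use boundedness). Thus (iii) of Theorem \ref{c0tall} $\iff$ $(x_n)_n$ is weakly null.

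Finally, putting it together: Theorem \ref{c0tall} gives $\mathcal{R}\leq_K\fin(\varphi_{\bf x})\iff$ (iii), and $\fin(\varphi_{\bf x})=\B({\bf x})$, and (iii) $\iff$ $(x_n)_n$ weakly null, which is the claim. I do not expect a serious obstacle here; the one point requiring a little care is the reduction to the norm-bounded case and the "both sides fail" argument when $(x_n)_n$ is unbounded, and making sure the normalization $\mu_k\{n\}\leq 1$ used inside Theorem \ref{c0tall} is compatible with the setup (it is, by rescaling all $x_n$ by $1/M$ where $M=\sup_n\|x_n\|_\infty$, which changes neither tallness, nor $\leq_K$-position, nor weak nullity). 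Everything else is a direct translation.
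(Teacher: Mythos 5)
Your proposal is correct and follows essentially the same route as the paper: the paper proves Theorem~\ref{c0-tall} by noting (in the paragraph preceding it) that ``type $c_0$'' is exactly the statement that $\fin(\varphi)$ is represented in $c_0$ and that condition (iii) of Theorem~\ref{c0tall} is the translation of weak nullity in $c_0$, so the theorem is an immediate corollary of Theorem~\ref{c0tall}. You have made that translation explicit and, importantly, filled in the two details the paper leaves implicit: (a) that the reduction to $x_n\geq 0$ via Lemma~\ref{Banach10} preserves weak nullity in $c_0$ (true, since weak nullity there is equivalent to norm-boundedness plus coordinatewise convergence, both of which are invariant under taking coordinatewise absolute values), and (b) that the norm-boundedness part of condition (iii), $\sup_n\varphi_{\bf x}(\{n\})<\infty$, does not come for free from the hypothesis that each $x_n\in c_0$ but nevertheless causes no trouble because when it fails both sides of the equivalence fail (an unbounded sequence is not weakly null and kills tallness, hence also $\mathcal{R}\leq_K\B({\bf x})$). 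The paper's remark mentions only the coordinatewise-convergence half of (iii), so your explicit handling of the boundedness half is a genuine improvement in rigor, not a departure in method.
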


\proof
If  $\mathcal{R}\leq_K \B({\bf x} )$, then $\B({\bf x})$ is tall and thus ${\bf x}=(x_n)_n$ is weakly null by  Proposition \ref{tall-wn}. Conversely, let $(x_n)_n$ be a weakly null sequence in $c_0$. By Lemma \ref{Banach10}, we can assume that $x_n(k)\geq 0$ for all $n$ and $k$.  The corresponding measures are given by $\mu_k(\{n\})=x_n(k)$.  Since each $x_n\in c_0$,     $(\mu_k)_k$ is of type $c_0$ (see definition \ref{type-c0}). Notice that  $\varphi_{\bf x}=\sup_k\mu_k$ and  thus $\B({\bf x})=\fin(\varphi_{\bf x})$. Condition (iii) in Theorem \ref{c0tall} (namely, $\lim_n\mu_k(\{n\})=0$ for all $k$) is the translation of being weakly null in $c_0$. Thus $\mathcal{R}\leq_K\fin(\varphi_{\bf x})$. 
\endproof

The previous result naturally suggests the following.

\begin{question}
Which Banach spaces satisfies  that $\mathcal{R}\leq_K \B({\bf x} )$ for every weakly null sequence ${\bf x}=(x_n)_n$? 
\end{question}

In relation to the previous question. Let   ${\bf x}=(x_n)_n$ be the usual unit basis of $l_2$ (which is weakly null). Since $l_2$ does not contain copies of $c_0$,  $\B({\bf x})=\fin$.  Moreover, the same happens in $l_\infty$, as this space contains isomorphic copies of every separable Banach space. 

\bigskip

\noindent{\bf Acknowledgment.} We thank Michael Rinc\'on Villamizar for some comments about an earlier version of this  paper and for bringing to our attention the article \cite{Drewnowski}.
We  thank  J. Tryba for his comments about the construction of pathological ideals.  
Finally, we also thank the anonymous referee for their careful reading and remarks, which helped clarify some inaccuracies presented in the first version of this article.

\end{document}